\title{Ricci flow from spaces with edge type conical singularities}
\author{Lucas Lavoyer}
\email{lucas.lavoyer@uni-muenster.de}
\newcommand{\norm}[1]{\lVert#1\rVert} %norm
\newcommand{\dt}{\partial_t} %partial time derivative
\newcommand*{\nor}{\| \hspace{-2ex}-}
\newcommand{\expander}{N\times\mathbb{R}} %product manifold
\newcommand{\gcan}{g_{\mathbb{R}}} %metric on the line
\def\Xint#1{\mathchoice
{\XXint\displaystyle\textstyle{#1}}%
{\XXint\textstyle\scriptstyle{#1}}%
{\XXint\scriptstyle\scriptscriptstyle{#1}}%
{\XXint\scriptscriptstyle\scriptscriptstyle{#1}}%
\!\int}
\def\XXint#1#2#3{{\setbox0=\hbox{$#1{#2#3}{\int}$ }
\vcenter{\hbox{$#2#3$ }}\kern-.6\wd0}}
\def\dashint{\Xint-}
\newtheorem{thm}{Theorem}[section] 
\newtheorem{lem}[thm]{Lemma} 
\newtheorem*{claim}{Claim} 
\newtheorem{cor}[thm]{Corollary}
\theoremstyle{definition}
\newtheorem{defn}[thm]{Definition}
\newtheorem*{rem}{Remark}
\theoremstyle{remark}
\begin{document}

\subjclass[2010]{53C44, 58J47}

\maketitle

\begin{center}
{\it Dedicated to the memory of Prof. Richard Hamilton}
\end{center} 

\begin{abstract}
We study the Ricci flow out of spaces with edge type conical singularities along a closed, embedded curve. Under the additional assumption that for each point of the curve, our space is locally modelled on the product of a fixed positively curved cone and a line, we show existence of a solution to Ricci flow $(M,g(t))$ for $t\in (0,T],$ which converges back to the singular space as $t\searrow 0$ in the pointed Gromov--Hausdorff topology. We also prove curvature estimates for the solution and, for edge points, we show that the tangent flow at these points is a positively curved expanding Ricci soliton solution crossed with a line. 
\end{abstract}

\section{Introduction}\label{intro}
Given a smooth, closed manifold $M$ and a Riemannian metric $g_0,$ Hamilton \cite{H1} showed there is a family of metrics $g(t),$ $t\in [0,T),$ satisfying the Ricci flow equation:
\begin{align}\label{RicciFlow}
    \partial_t g(t)=-2Ric(g(t)),
\end{align}
with $g(0)=g_0.$ The existence time for this solution is bounded from below by the inverse of the maximum of the Riemannian curvature at time $t=0$ multiplied by a dimensional constant. 
 
In recent years, the question of starting Ricci flow from non-smooth spaces has been of great interest. The idea is that the regularising properties of Ricci flow  will in some sense smooth out such spaces. In general, this is not a simple task, but results have been obtained in some particular cases. In dimension two, substantial advances have already been made in, for instance, the work of Giesen--Topping \cite{Giesen_Toppin2D} showing that given any Riemannian surface, which may be incomplete and of unbounded curvature at spatial infinity, there exists a solution to Ricci flow that is instantaneously complete for $t>0.$ Later on, Topping \cite{Toppin2D} proved the solution is also unique, and, recently, Topping and Yin \cite{Topping_Yin} showed there is a solution to Ricci flow starting from Radon measures in 2D. In 3-D, Simon and Topping \cite{simon-topping2} used Ricci flow to show that any non-collapsed Ricci limit space is actually a manifold, solving a conjecture of Anderson--Cheeger--Colding--Tian. In \cite{Polyhedral-RF}, Lebedeva--Matveev--Petrunin--Shevchishin apply the results in \cite{Simon1, Simon2, Simon3} by Simon to show that any polyhedral space in three dimensions with non-negative curvature in the Alexandrov sense can be approximated by Riemannian manifolds with non-negative sectional curvature. Other papers where the Ricci flow from non-smooth initial data is studied are \cite{RF_almostNonNegativeCurvature, Der1, Hochard_paper, Huang_Tam, Lai1, Schulze_Simon, Der-Schu-Sim, Simon4, Xu1, Lamm-Simon}, where the list is definitely not exhaustive. 

In many cases, the non-smooth spaces considered might have conical points. These points can be characterised by having a Euclidean cone as their tangent cone. In \cite{conicalsing}, Gianniotis and Schulze studied the Ricci flow coming out of spaces with isolated conical singularities modelled on positively curved cones. These spaces are smooth manifolds when we exclude the conical points, and can be expected to show up as certain limits of Ricci flows developing type I singularities, see Section 1 of \cite{conicalsing}. However, spaces with non-isolated conical points can also be expected to show up as limits of manifolds; see, for instance, \cite{degenerationOfMetrics}, Example 0.29. 

The aim of this paper is to study the Ricci flow out of certain compact spaces with edge type conical singularities, generalising the results of Gianniotis--Schulze. These spaces are smooth manifolds except along a closed, embedded curve, where each point is conical. The precise meaning of this is given in Definition \eqref{singular_space}. We show short-time existence of a Ricci flow coming out of these spaces, under the additional assumption that the conical cross-section of the singular points is positively curved, and prove curvature bounds for the solution. 

\textbf{Main statement.} Before stating the main result of the paper, we define what it means for a space to be a compact space with edge type conical singularities. In order to do so, we recall the definition of a metric cone over a smooth manifold.
\begin{defn}
Let $(X,g_X)$ be a smooth, closed $(n-1)-$dimensional Riemannian manifold. The topological cone over $X,$ $C(X),$ is defined as the set of equivalences in the product $[0,\infty]\times X:$
\begin{align*}
    C(X)=\{ [0,\infty] \times X\}/ \sim ,
\end{align*}
where the equivalence relation is given by $(0,x)\sim(0,y),$ for every $x,y\in X.$ We denote the vertex of $C(X)$ by $o.$ For $R>0,$ let
\begin{align*}
    C_R(X) := \{ (r,x)\in C(X) \hspace{0.1cm}|\hspace{0.2cm} r < R \}.
\end{align*}
We also equip $C(X)$ with the standard conical metric $g_c = dr^2 + r^2g_X.$
\end{defn}

\begin{defn}\label{singular_space} 
The pair $(Z,g_Z)$ is a compact space with edge type conical singularities modelled on $\displaystyle{(C(X)\times\mathbb{R},G=g_c\otimes\gcan)}$ along a closed, embedded curve $\Gamma\subset M$ 
if 
\begin{enumerate}
    \item $(Z\backslash \Gamma, g_Z)$ is a smooth Riemannian manifold, with $g_Z$ an incomplete metric.
    
    \item $(Z,d_Z)$ is a compact metric space, where $d_Z=d(g_Z)$ is the metric induced by $g_Z.$
    
    \item There exist uniform $r_0 >0$ and $\eta_0>0$ such that $\forall p\in \Gamma,$ there exists a map  
    \begin{align*}
        \phi_p: (0,r_0]\times X \times [-\eta_0/2,\eta_0/2] \longrightarrow  Z
    \end{align*}
   such that $\phi$ is a diffeomorphism onto its image,
   \begin{align*}
       \lim_{r\to 0}\phi_p(r,x,0)=p\in \Gamma,
   \end{align*}
and the length of $\Gamma$ from $q^-$ to $q^+,$ where $\displaystyle{\lim_{r\to 0}\phi_p(r,x,\pm \eta_0/2)=q^{\pm}}$ for any fixed $x\in X,$ is such that  $\mathbf{l}(\Gamma_{|_{[q^-,q^+]}})=\eta_0.$ 

    \item Finally, for every $p\in \Gamma$ and $0<r<r_0,$ it holds that
    \begin{align}\label{blow up to cone}
      \sum_{j=0}^{4}r^j\left| (\nabla^G)^j(\phi_{p}^*g_Z - G)\right|{\big|_{B_r(p)}} < \kappa(r),
    \end{align}
 where $\kappa(r)\to 0$ as $r\to 0.$ 
\end{enumerate}

\end{defn}

Given the definition above, we see that $C(X)\times \mathbb{R}$ with the product metric is the Gromov--Hausdorff tangent cone at any $p\in \Gamma.$ When $X$ is a closed, simply connected manifold with $Rm(g_X) \geq 1,$ we remark that Deruelle \cite{Der1} showed there is a unique expanding Ricci soliton asymptotic to $C(X).$ From now on, we will assume the conditions above on $X.$

\begin{thm}\label{mainTheorem}
Let $\displaystyle{(Z,g_Z)}$ be a compact space with edge type conical singularities along a closed, embedded curve $\Gamma,$ each modelled on
\begin{align*}
    \left( C(\mathbb{S}^{n-1})\times\mathbb{R}, G= dr^2 + r^2g+dl^2\right),
\end{align*}
with $\displaystyle{Rm(g)\geq 1},$ but $\displaystyle{Rm(g)\not\equiv 1}.$ Under these assumptions, there exists a Ricci flow solution $(M,g(t))_{t\in (0,T]},$ where $M$ is a smooth manifold, and a constant $C_M$ satisfying the following.
\begin{enumerate}
    \item $(M,d_t) \rightarrow (Z,d_Z)$ in the Gromov--Hausdorff topology as $t\searrow 0,$
    \item there exists a map \begin{align*}
        \Psi: Z\backslash \Gamma \longrightarrow M,
    \end{align*}
a diffeomorphism onto its image, such that $\displaystyle{\Psi^*g(t)\to g_Z}$ in the $C^{\infty}$ topology as $t\searrow 0,$ uniformly away from $\Gamma,$
    \item $\displaystyle{|Rm(g(t))|_{g(t)}\leq \frac{C_M}{t}}$ for $t\in (0,T],$
    \item given any sequence $t_k \searrow 0$ and $p_k\in Im(\Psi)^c,$ with $p_k \to q\in \Gamma$ under the Gromov--Hausdorff convergence from above, we have:
    \begin{align*}
        \left( M, \frac{1}{t_k}g(t_kt),p_k\right)_{t\in (0,{t_k}^{-1}T]} \rightarrow \left( \expander, {g_{\exp}}(t),\Bar{q}\right)_{t\in (0,\infty)},
    \end{align*}
    where $\displaystyle{\left( \expander, {g_{\exp}}(t),\Bar{q}\right)_{t\in (0,\infty)}}$ is the product Ricci flow induced by $(\expander, g_N\otimes\gcan),$ and $(N,g_N)$ is the unique expander with positive curvature operator asymptotic to $\displaystyle{( C(\mathbb{S}^{n-1}), g_c)}.$
\end{enumerate}

\end{thm}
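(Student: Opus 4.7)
The approach extends the Gianniotis--Schulze framework of \cite{conicalsing} to the present one-dimensional singular stratum. For each small $\varepsilon>0$ I would construct a smooth Riemannian approximation $g_\varepsilon$ of $g_Z$ on a fixed smooth manifold $M$ (a desingularisation of $Z$) by excising an $\varepsilon$-tubular neighbourhood of $\Gamma$ and gluing in a rescaled copy of the product expander $(\expander,g_N\otimes\gcan)$ along the curve. Running Ricci flow from $g_\varepsilon$ produces smooth solutions $g_\varepsilon(t)$, and the heart of the proof lies in establishing a uniform estimate $|Rm(g_\varepsilon(t))|_{g_\varepsilon(t)}\le C/t$ on $(0,T]$ with $C,T$ independent of $\varepsilon$, after which Hamilton's compactness theorem extracts the desired limit flow.

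\textbf{Construction of approximations.} By Definition \ref{singular_space}, for every $p\in\Gamma$ the chart $\phi_p$ pulls $g_Z$ back to a metric which is $C^4$-close to the product cone $G$ after rescaling, with quantitative error given by \eqref{blow up to cone}. Since $(N,g_N)$ is asymptotic to $(C(\mathbb{S}^{n-1}),g_c)$, on the annulus $r\in[\varepsilon,2\varepsilon]$ both $g_Z$ and the rescaled product expander metric are $o(1)$-close to $G$ as $\varepsilon\searrow 0$. A smooth cut-off then interpolates between them uniformly in $p\in\Gamma$, yielding $g_\varepsilon$ which equals $g_Z$ outside the $2\varepsilon$-neighbourhood of $\Gamma$ and agrees with the product expander metric inside the $\varepsilon$-neighbourhood. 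The initial curvature of $g_\varepsilon$ is $O(\varepsilon^{-2})$, concentrated near $\Gamma$.

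\textbf{Uniform estimates and passage to the limit.} Hamilton's short-time existence gives $g_\varepsilon(t)$ on $[0,c\varepsilon^2]$. To upgrade this to a fixed interval $(0,T]$ with $|Rm(g_\varepsilon(t))|\le C/t$, I would run a contradiction/blow-up argument: if the estimate failed, one rescales at points realising the worst curvature-to-time ratio and extracts an ancient Ricci flow limit. The translational symmetry along $\Gamma$, together with the product structure of the local model, forces the limit to split off an $\mathbb{R}$ factor; the remaining factor is an ancient solution asymptotic to $(C(\mathbb{S}^{n-1}),g_c)$. Deruelle's uniqueness result \cite{Der1} then identifies this factor with the expander flow, contradicting the blow-up normalisation. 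Once the uniform bound is secured, Hamilton's compactness yields the limit flow $(M,g(t))$; the diffeomorphism $\Psi$ is obtained from the smooth convergence of $g_\varepsilon\to g_Z$ on compact subsets of $Z\setminus\Gamma$; and Gromov--Hausdorff convergence as $t\searrow 0$ follows from the $C/t$ bound via a standard distance-distortion estimate. The tangent-flow statement at edge points is then essentially the same blow-up analysis applied to $(M,g(t))$ itself.

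\textbf{Main obstacle.} The principal difficulty is maintaining uniform control along the entire edge. In the isolated-singularity case, pseudolocality and single-point blow-ups handle this cleanly; here the singular stratum is a compact curve, and in the blow-up argument one must verify that the $\mathbb{R}$-symmetry of the local model descends to the rescaled limit, uniformly as the basepoint varies along $\Gamma$. A related subtlety is ensuring that the gluing at scale $\varepsilon$ does not generate spurious curvature away from the edge; the product structure of $g_Z$ (to leading order) and the expander in the $\mathbb{R}$-direction should make this manageable, but the bookkeeping required for uniform estimates is strictly heavier than in the point-singularity setting.
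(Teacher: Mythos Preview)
Your overall architecture—desingularise by gluing in the product expander at scale $\varepsilon$, prove a uniform $C/t$ curvature bound, then extract a limit via Hamilton compactness—matches the paper. But the mechanism you propose for the $C/t$ bound is different from the paper's and has a genuine gap. The paper does not argue by contradiction/blow-up. It proves the estimate directly: pseudolocality (Lemma~\ref{conelemma}) controls the conical region $\{r_s\ge\sqrt{\gamma t+s\Lambda^2}\}$, and in the expanding region the Ricci--DeTurck flow is shown to stay $C^k$-close to the product expander flow $g_0(t)$ by extending the Deruelle--Lamm stability theory to $N\times\mathbb{R}$ (Section~\ref{stabilitySection}; the key point is that the Lichnerowicz heat kernel splits as $K_L^N\cdot K^{\mathbb{R}}$, so the Gaussian decay in the $\mathbb{R}$-direction damps the influence of the horizontal ends). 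The obstacle you correctly flag—uniform control along the whole curve—is handled not by a splitting in a blow-up limit but by an overlap/iteration argument (Lemma~\ref{expEstimate}): one assumes an a priori bound $\le 1$ on the horizontal ends of a neighbourhood of a point on $\Gamma_s$, uses local stability (Lemma~\ref{localstab}) to get an $\varepsilon$-bound in the interior, and then observes that the interiors of overlapping neighbourhoods cover each other's ends, closing the loop after finitely many steps.

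Your blow-up argument, as written, does not produce a contradiction. Suppose $t_\varepsilon|Rm(g_\varepsilon(t_\varepsilon))|(p_\varepsilon)\to\infty$; after parabolic rescaling you would hope to extract a limit flow and identify it with the product expander flow. But the product expander flow itself satisfies $t|Rm|\le C$, so recovering it as the limit is exactly what happens when the estimate \emph{holds}—it is not incompatible with the blow-up normalisation. Deruelle's uniqueness in \cite{Der1} classifies expanders asymptotic to a given cone among gradient expanders with nonnegative curvature operator, not among general eternal or ancient flows, so it does not supply the rigidity you need. Nor does the approximate $\mathbb{R}$-translation invariance of $g_\varepsilon$ automatically yield a splitting of the blow-up limit; that would require separate justification (e.g.\ producing a parallel vector field on the limit), which your outline does not provide.
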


\begin{rem}
We remark that the theorem still holds, with only minor modifications to the proof, if the metric $g$ on the link of the cones, with $Rm(g)\geq 1,$ varies smoothly along $\Gamma.$ 
\end{rem}

Our approach to proving the result above is the following. We start by desingularising the initial metric. The idea is to approximate, for each point $p\in \Gamma$ on the curve, a small neighbourhood of $p$ by $N\times[-\eta,\eta],$ where $\eta>0$ is a small number that will depend on $\Gamma,$ and the expander $N$ is glued in at a small scale $s>0.$ We cover $\Gamma$ in this way, making sure our glueing is smooth so that we obtain a smooth Riemannian manifold $(M_s,g_s)$ that approximates our initial space. The expanding soliton $(N,g_N(t))$ used above is known to be stable due to the work of Deruelle--Lamm \cite{Der-Lamm}. We extend their result to $\expander$ with the product Ricci flow solution. After this, we prove uniform curvature bounds for the approximations via an iteration argument. The idea is that on a small neighbourhood of any $p\in \Gamma,$ the approximated solution will stay close to the product Ricci flow on $\expander,$ in an appropriate sense, assuming bounds on the boundary of this neighbourhood. Since we do not want to assume any control over time, we use pseudolocality to control the solution far enough from the curve (where the metric is conical), and an iteration argument with overlapping neighbourhoods as above to show we do not need any control on the horizontal ends of this neighbourhood. Then we let $s\searrow 0$ to obtain a limit solution. 

We observe that the condition on the curvature of the model cones is only used when we apply the existence and stability of such expanding solitons; our solution does not depend on any curvature bounds for the initial data or the approximating solutions. In particular, this construction could be carried out in the same way if existence and stability of expanders out of more general cones were proven.

\textbf{Outline.} We now give an overview of how the paper is organised. In Section \ref{preliminaries}, we recall a few properties of expanding Ricci solitons asymptotic to cones and define the model class we will be working with. In Section \ref{stabilitySection} we adapt the stability result of Deruelle--Lamm \cite{Der-Lamm} to extend it to $\expander$ with the product metric, which can be seen as a nonnegatively curved expanding soliton. The observation is that the curvature is still decaying with the inverse of the distance to the spine $\displaystyle{\{q_{\max}\} \times \mathbb{R}},$ where $q_{\max}$ is the unique critical point of the potential function $f_N.$ Together with the splitting of the heat kernel on a product manifold, this turns out to be enough to obtain the same stability result.

In Section \ref{uniformEstimates} we localise the stability obtained in Section \ref{stabilitySection} and prove uniform estimates for solutions in our model class. In order to do that, we split our space into two regions: the \textit{conical} and \textit{expanding} region. On the conical region, our metric is almost conical and, therefore, almost flat. This allows Perelman's pseudolocality to control the flow. We are then able to obtain uniform control along the curve using an overlap argument, moving from a local control, i.e., of a small neighbourhood of a (any) point in the approximate curve $\Gamma_s$ to a result controlling a neighbourhood of the whole curve. 

The observation is that the estimates on the conical region control our solution far out in the neighbourhood, but we also initially assume control on the 'horizontal ends' of such neighbourhood, which puts us in a position where we can apply the local stability result to obtain estimates on a smaller neighbourhood of a point in the curve. In order to do so, we rely on the fact that the heat kernel $K_L$ associated to the Lichnerowicz operator (defined in section \ref{stabilitySection}) splits on a product manifold, so $K_L=K^N_{L}K^{\mathbb{R}},$ where $K^{N}_{L}$ and $K^{\mathbb{R}}$ are the heat kernels on $N$ and $\mathbb{R},$ respectively. Along with this, the classical decay estimate for the heat kernel in Euclidean space combined with the results in \cite{Der-Lamm} yield the needed estimate:
\begin{align*}
    \| K_L(x,t,y,s)\| \leq \frac{c}{(t-s)^{\frac{n+1}{2}}}\exp\left\{ -\frac{d^2_{g_0(s)}(x,y)}{D(t-s)}\right\},
\end{align*}
for $0\leq s < t$ and $x,y \in N\times\mathbb{R},$ where $c$ will depend only on the expander, $g_0(t)=g_N(t)\otimes \gcan(t)$ is the product Ricci flow induced by $(\expander,g_N\otimes \gcan),$ and the norm is also defined in section \ref{stabilitySection}. The exponential decay of the heat kernel in the $\mathbb{R}-$direction implies that the influence of the 'horizontal ends' is small at the centre. We do this for enough points $p_k \in \Gamma_s$ so that we cover $\Gamma_s.$ This can then be improved indefinitely by overlapping such regions until we obtain that the control only depends on estimates in the conical region. 

Finally, in Section \ref{theSolution} we construct the approximating solution explicitly and pass to a limit with $s\searrow 0.$ This gives us a solution to the Ricci flow that exists for $t\in (0,T]$ and has the right curvature decay. Furthermore, as $t\searrow 0,$ we show that away from $\Gamma,$ the Ricci flow $g(t)$ converges to $g_Z$ locally smoothly uniformly after being pulled back by an appropriate map. The solution also converges back to $(Z,d_Z)$ in the Gromov--Hausdorff sense. 

\subsection{Acknowledgements} The author thanks his supervisor, Felix Schulze, for helpful discussions and encouragement. The author was supported by the UK Engineering and Physical Sciences Research Council (Grant number: EP/V520226/1).

\section{Preliminaries}\label{preliminaries} 

We start by briefly recalling the definition of gradient Ricci solitons, first introduced by Hamilton in \cite{HAM2D}. The triple $(N,g_N,f_N),$ where $(N,g_N)$ is a Riemannian manifold and $f_N$ is a smooth function on $N,$ is a gradient Ricci soliton if it satisfies 
\begin{align*}
   Hess_{g_N}f_N= \frac{1}{2}\mathcal{L}_{\nabla f_N} g_N=Ric(g_N)+ \lambda{g_N},
\end{align*}
for some constant $\lambda,$ where $\mathcal{L}$ denotes the Lie derivative. The soliton is expanding if $\lambda>0,$ steady if $\lambda=0$ or shrinking if $\lambda<0.$ In our case, $\lambda >0$ and the soliton is an \textbf{expanding} gradient Ricci soliton. We can always normalise the metric so that $\displaystyle{\lambda= \frac{1}{2}.}$ If the soliton has bounded curvature, by possibly changing the potential function by a constant if necessary, we can assume that $f_N\geq 0.$ An expanding Ricci soliton induces a solution to the Ricci flow given by 
\begin{align}
    g_N(t)=  t(\phi_N^t)^*g_N, \hspace{1cm} \text{ for } t>0,
\end{align}
where $\phi_N^t$ is the family of diffeomorphisms generated by $\displaystyle{-\frac{1}{t}\nabla f_N}$ with $\phi_N^1=id.$

We can now give a precise definition of an expanding Ricci soliton coming out of a cone. 

\begin{defn}\label{asymptotic expander}
Let $(X,g_X)$ be a smooth Riemannian manifold and 
\begin{align*}
    (C(X),g_c=dr^2 + r^2g_X,o)
\end{align*}
be the associated cone with vertex $o.$ We say that the expanding Ricci soliton $(N,g_N,f_N)$ is asymptotic to the cone $C(X)$ if
\begin{enumerate}
    \item for some $\Lambda_0>0,$ there is $\displaystyle{F:[\Lambda_0,\infty)\times X \longrightarrow N},$ a diffeomorphism onto its image,  such that $\displaystyle{N\backslash Im(F)}$ is compact and 
    \begin{align*}
        f_N(F(r,x))=\frac{r^2}{4},
    \end{align*}
    for every $(r,x)\in [\Lambda_0,\infty)\times X.$
    \item \begin{align*}
        \sum_{j=0}^{4}\sup_{\partial B(o,r)}r^j|(\nabla^{g_c})^j(F^*g_N -g_c)|_{g_c}=k_{e}(r),
    \end{align*}
    where $k_e(r)\to 0$ when $r\to \infty.$
\end{enumerate}
\end{defn}
In \cite{Der1}, Deruelle shows that if $X$ is diffeomorphic to the standard sphere $\displaystyle{\mathbb{S}^{n-1}}$ and the metric on the sphere satisfies $\displaystyle{Rm(g)\geq 1},$ but $\displaystyle{Rm(g)\not\equiv 1},$ then the expanding soliton smoothing out the cone over $X$ exists and it is unique. 

For the expanding soliton, an important observation is that the notion of asymptotic cone coincides with the notion of rough initial data in the Gromov--Hausdorff sense (see \cite{Der1} for the details). It is also standard to consider the following natural coordinate at infinity on the expander:
\begin{align*}
    \mathbf{r}:= 2\sqrt{f}=(F^{-1})^*r.
\end{align*}
We use $\mathbf{r}$ to measure the distance from a point on the manifold to the tip of the expander, i.e., the critical point of $f_N,$ which is unique when $Rm(g_N)\geq 0$ and the expander is normalised. In fact, since we will be working with the expander at scale $s>0,$ we also define the radial coordinate for this case, given by $\mathbf{r}_s=2\sqrt{sf_s},$ where $f_s=f\circ \phi_s.$ We can also define ${F_s: [\Lambda_0 \sqrt{s},\infty)\times X\rightarrow N}$ by $\displaystyle{F_s=(\phi_s)^{-1}\circ F \circ a_s,}$ where $\displaystyle{a_s(r,x)=\left(\frac{r}{\sqrt{s}},x\right)}$ for $(r,x)\in [0,\infty)\times X.$ Hence, $\mathbf{r}_s(F_s(r,x))=r.$ 

The next lemma shows us that the expander at scale $s$ converges to the cone as $s\to 0$ and it is indeed natural to consider $F_s.$ The proof is a straightforward computation (see equation 2.6 on \cite{conicalsing}).

\begin{lem}\label{expander_to_cone}
Under the construction above, $\displaystyle{F_{s}^{*}g_N(s)}$ converges to $g_c$ as $s\to 0,$ uniformly away from $o$ in $\displaystyle{C^{4}_{loc}}.$
\end{lem}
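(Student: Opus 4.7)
The plan is to reduce the lemma to a direct computation exploiting two ingredients: the self-similar structure $g_N(s)=s(\phi_s)^*g_N$, and the asymptotic decay bound $\sum_{j=0}^{4} r^j\,|(\nabla^{g_c})^j(F^*g_N - g_c)|_{g_c}\le k_e(r)$ from Definition \ref{asymptotic expander}. Unwinding $F_s=\phi_s^{-1}\circ F\circ a_s$ and using $(\phi_s^{-1})^*(\phi_s)^*=\mathrm{id}$, the diffeomorphism $\phi_s$ drops out entirely, giving the clean identity
\[
F_s^*g_N(s) \;=\; s\,a_s^*\bigl(F^*g_N\bigr).
\]
Since $a_s(r,x)=(r/\sqrt{s},x)$ is a homothety with $a_s^*g_c = s^{-1}g_c$, the conical part reproduces itself, $s\,a_s^*g_c = g_c$, so setting $h:=F^*g_N-g_c$ on $[\Lambda_0,\infty)\times X$ yields
\[
F_s^*g_N(s) - g_c \;=\; s\,a_s^*h.
\]
The whole task is therefore reduced to showing $s\,a_s^*h\to 0$ in $C^4_{loc}(C(X)\setminus\{o\})$ as $s\to 0$.

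For this, I would use that $s^{-1}g_c$ is a constant rescaling of $g_c$, so the Levi-Civita connections agree and $\nabla^{g_c}$ commutes with $a_s^*$. Combined with the standard scaling of $(0,k)$-tensor norms under a metric rescaling $g\mapsto \lambda g$ (which gives a factor $\lambda^{-k/2}$), one obtains, for any $(0,2)$-tensor $T$ on the cone,
\[
\bigl|(\nabla^{g_c})^{j}(a_s^*T)\bigr|_{g_c}(p) \;=\; s^{-(2+j)/2}\,\bigl|(\nabla^{g_c})^{j}T\bigr|_{g_c}\!\bigl(a_s(p)\bigr).
\]
Applying this with $T=h$, absorbing the prefactor $s$, and inserting the asymptotic estimate from Definition \ref{asymptotic expander} yields, at any point $(r,x)$ in the domain of $F_s$,
\[
\bigl|(\nabla^{g_c})^{j}(s\,a_s^*h)\bigr|_{g_c}(r,x) \;=\; s^{-j/2}\,\bigl|(\nabla^{g_c})^{j}h\bigr|_{g_c}\!\bigl(r/\sqrt{s},x\bigr) \;\le\; \frac{k_e(r/\sqrt{s})}{r^{j}},
\]
for every $0\le j\le 4$. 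On any compact $K\subset C(X)\setminus\{o\}$, the radial coordinate $r$ is bounded below, so $r/\sqrt{s}\to\infty$ uniformly on $K$ as $s\to 0$; since $k_e(r')\to 0$ at infinity, this forces the right-hand side to go to zero uniformly in $K$, giving the claimed $C^4_{loc}$ convergence.

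There is really no serious obstacle here: the lemma is a formal consequence of the expander's self-similar scaling together with the cone-asymptotic bound. The only point worth verifying carefully is the domain, namely that $F_s$ is defined on $[\Lambda_0\sqrt{s},\infty)\times X$, so that for any fixed compact $K\subset C(X)\setminus\{o\}$ the above identities make sense on $K$ as soon as $s$ is small enough that $\Lambda_0\sqrt{s}<\min_K r$.
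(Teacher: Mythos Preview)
Your proof is correct and is precisely the ``straightforward computation'' the paper alludes to (the paper does not write out a proof but refers to equation 2.6 in \cite{conicalsing}). The key identity $F_s^*g_N(s)=s\,a_s^*(F^*g_N)$ together with the scale-invariance $s\,a_s^*g_c=g_c$ and the decay $k_e(r)\to 0$ from Definition \ref{asymptotic expander} is exactly the intended argument.
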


We also state a second lemma from \cite{conicalsing}, which will be useful to us later.
\begin{lem}\label{lemma21anal}(Lemma 2.1, \cite{conicalsing}).
Let $\displaystyle{(N, g_N, f_N)}$ be an asymptotically conical gradient Ricci expander, and let $g_N(t),$ $t\geq 0,$ be the induced Ricci flow with $g_N(0)=g_N.$ There exists $\gamma_0\geq 1,$ $c,\Lambda_0>0$ such that 
\begin{align*}
   & |{F}^*g_N - g_c|_{g_c} +\mathbf{r}|\nabla^{g_c}{F}^*g_N|_{g_c} < \frac{1}{100}, \hspace{0.3cm} \frac{1}{2}\leq |\nabla^{g_N}\mathbf{r}|_{g_N}\leq 2\\
    & |\mathbf{r}\Delta_{g_N}\mathbf{r}|_{g_N}\leq 4(n-1), \hspace{0.3cm} \mathbf{r}^2|Rm(g_N)|_{g_N}\leq C(g_c)
\end{align*}
on $\displaystyle{\left\{ (x,t)\in N\times [0,+\infty); \hspace{0.1cm} \mathbf{r}(x)\geq \sqrt{\gamma_0 t +\Lambda_0^2} \right\}}.$
\end{lem}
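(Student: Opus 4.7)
The plan is to derive all four inequalities directly from the $C^4$ asymptotic cone hypothesis in Definition~\ref{asymptotic expander}, which supplies a decay function $k_e(r)\to 0$ controlling $F^*g_N - g_c$ together with its first four $g_c$-covariant derivatives. The constant $\gamma_0$ plays no role in the estimates themselves, since they only involve $g_N$; it enters only through the defining inequality of the region, and since $\{\mathbf{r}(x)\geq\sqrt{\gamma_0 t+\Lambda_0^2}\}\subset\{\mathbf{r}\geq\Lambda_0\}$ for every $t\geq 0$, it suffices to verify each inequality pointwise on $\{\mathbf{r}\geq\Lambda_0\}$ and then enlarge $\Lambda_0$ at the end. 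I would first fix $\Lambda_0$ so that $k_e(r)<1/200$ for $r\geq\Lambda_0$. Because $\mathbf{r}\circ F=r$, the first inequality is then immediate from Definition~\ref{asymptotic expander}: on $\{\mathbf{r}\geq\Lambda_0\}$,
\[
|F^*g_N - g_c|_{g_c} + \mathbf{r}\,|\nabla^{g_c}F^*g_N|_{g_c} \leq k_e(\mathbf{r}) < 1/100.
\]

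For the two intermediate estimates I would work in the $F$-coordinates $(r,x)$, in which $\mathbf{r}\circ F=r$ and the cone quantities are exactly $|\nabla^{g_c}r|_{g_c}=1$ and $\Delta_{g_c}r=(n-1)/r$. The pointwise formulas for $|\nabla^h u|_h^2 = h^{ij}\partial_i u\,\partial_j u$ and $\Delta_h u = h^{ij}(\partial_i\partial_j u - \Gamma^k_{ij}(h)\partial_k u)$ depend smoothly on $h$ and on its first derivatives (through the Christoffel symbols); applied to $h=F^*g_N$, using the weighted $C^1$-smallness from the previous step, they give
\[
\bigl|\,|\nabla^{g_N}\mathbf{r}|_{g_N}^2 - 1\,\bigr| \leq C\,k_e(\mathbf{r}),\qquad \bigl|\,\mathbf{r}\,\Delta_{g_N}\mathbf{r} - (n-1)\,\bigr| \leq C\,k_e(\mathbf{r}),
\]
so the second and third bounds follow after further enlarging $\Lambda_0$.

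For the curvature estimate, an explicit computation on the cone yields $|Rm(g_c)|_{g_c}\leq C(g_c)/r^2$: the radial directions contribute nothing, and the sectional curvatures of the spherical block scale like $r^{-2}$ with coefficients depending only on $Rm(g_X)$. Since the Riemann tensor depends polynomially on $h^{-1}$ and on the first two derivatives of $h$, the weighted $C^2$-closeness built into $k_e(r)$ gives
\[
r^2\,|Rm(F^*g_N)|_{g_c}\leq C(g_c) + C\,k_e(r),
\]
which transfers under $F$ to $\mathbf{r}^2|Rm(g_N)|_{g_N}\leq C(g_c)$ once $\Lambda_0$ is fixed.

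The step I expect to be most delicate is the curvature estimate: one must carefully track how derivatives of $h^{-1}$ and of the Christoffel symbols pick up powers of $r$ when differentiated, in order to see that every error term is absorbed into the $r$-weighted decay of $k_e$ encoded in Definition~\ref{asymptotic expander}. There is no genuine analytic difficulty here, however — the point is purely bookkeeping, and the fourth-order control in the definition is chosen precisely so that all such factors are handled uniformly.
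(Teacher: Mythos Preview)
The paper does not give its own proof of this lemma; it is quoted from \cite{conicalsing}, so there is nothing in the present paper to compare your argument against. Taking the statement exactly as printed---with every occurrence of the metric being $g_N$ rather than $g_N(t)$---your approach is correct: all four bounds are then assertions about the fixed metric $g_N$ on $\{\mathbf r\ge\Lambda_0\}$, and they follow from the weighted $C^4$ closeness of $F^*g_N$ to $g_c$ in Definition~\ref{asymptotic expander} by exactly the local computations you describe.

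However, your observation that ``$\gamma_0$ plays no role'' is a warning sign rather than a simplification. Look at how the lemma is actually invoked in the proof of Lemma~\ref{localstab}: it is used to bound $|\nabla^{g_0(t)}\mathbf r|$, $\mathbf r\,\Delta_{g_0(t)}\mathbf r$, and $\mathbf r^2|Rm(g_0(t))|$ on the time-dependent annulus $A_{\lambda_0}$. The space--time domain in the statement, together with these applications, makes it clear that in \cite{conicalsing} the last three estimates are meant for the evolving metric $g_N(t)$, and the appearance of $g_N$ here is a transcription slip. For that intended version your argument is genuinely incomplete: one must use the soliton relation $g_N(t)=(1+t)(\phi_t)^*g_N$ together with the approximate scaling $\mathbf r(\phi_t(x))^2\approx \mathbf r(x)^2/(1+t)$, obtained by integrating $\partial_t f_N(\phi_t(x))=-\tfrac{1}{1+t}|\nabla f_N|^2(\phi_t(x))$ and the soliton identity for $|\nabla f_N|^2$, to transfer the $t=0$ bounds forward in time. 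It is precisely this step that forces the constraint $\mathbf r(x)\ge\sqrt{\gamma_0 t+\Lambda_0^2}$ rather than merely $\mathbf r(x)\ge\Lambda_0$, and so gives $\gamma_0$ its role.
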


From now on, $(N,g_N, f_N)$ will be a positively curved expanding gradient Ricci soliton, $q_{\max}$ the unique maximal point of $f_N,$ and $g_N(t)$ will be its associated Ricci flow. We also observe that given the Euclidean metric on $\mathbb{R},$ $g_{\mathbb{R}},$ one can construct a static solution to the Ricci flow which has an expanding structure. Define the potential function on $\mathbb{R}$ by $f_{\mathbb{R}}(l):= \frac{l^2}{4}$ and let $\displaystyle{g_{\mathbb{R}}(t)=t(\phi_{\mathbb{R}}^t)^*g_{\mathbb{R}}},$ where $\displaystyle{ \phi_{\mathbb{R}}^t:\mathbb{R}\longrightarrow \mathbb{R}}$ is a family of diffeomorphisms with $t > 0,$ $\phi_{\mathbb{R}}(\cdot,1)=id_{\mathbb{R}}$ and 
\begin{align*}
    \dt \phi_{\mathbb{R}}^t=-\frac{1}{t}\nabla f_{\mathbb{R}}(\phi^{t}_{\mathbb{R}}).
\end{align*}
Integrating this, $\phi_{\mathbb{R}}^t$ is then given by $\displaystyle{ \phi_{\mathbb{R}}^t=\frac{1}{\sqrt{t}}id_{\mathbb{R}}.}$ Thus, $g_{\mathbb{R}}(t)\equiv g_{\mathbb{R}}$ for all times and it solves the Ricci flow equation with $Ric(g_{\mathbb{R}}(t))\equiv 0.$ 

Given the construction above, we can consider the expander $\displaystyle{ (N\times \mathbb{R},g_{\exp} ,{f}_0 ),}$ where ${g}_{\exp}=g_N\otimes g_{\mathbb{R}},$ ${f}_0(x,l)= f_N(x) + f_{\mathbb{R}}(l),$ and the associated Ricci flow given by $\displaystyle{{g_{\exp}}(t)=g_N(t)\otimes g_{\mathbb{R}}(t)}.$ Finally, let $\displaystyle{\tilde{F}_s(r,x,l)=(F_s(r,x),l)}$ for every $(r,x,l)\in  [\Lambda_0 \sqrt{s},\infty)\times X \times \mathbb{R}.$

We now define the model class $\mathcal{M}(\delta,\Lambda,s)$ that we will be working with. Metrics in $\mathcal{M}(\delta,\Lambda,s)$ can be seen as the smoothing of an edge type conical singularity with an expander at scale $s$ cross an interval of the real line. In Section \ref{theSolution}, our approximating solution will be constructed as an element of $\mathcal{M}(\delta,\Lambda,s)$ for appropriate values of $\delta,\Lambda$ and $s.$

\begin{defn}\label{aproxclass}
Given $\delta, s >0,$ $\Lambda \geq \Lambda_0,$ we say that the pair $(M,g)$ belongs to $\mathcal{M}(\delta, \Lambda,s)$ if it is a complete Riemannian manifold with bounded curvature satisfying the following. There exist a closed, embedded curve $\Gamma_s \subset M,$ with length $\mathbf{l}(\Gamma_s)=L>0,$ and a map $\Phi_s: \{\mathbf{r_s}\leq 1\}\times [-L/2,L/2] \longrightarrow M$ such that $\displaystyle{\Phi_s}$ is a diffeomorphism onto its image, and $\Phi_s\left(\{q_{\max}\}\times [-L/2,L/2] \right)=\Gamma_s,$  where $q_{\max}\in N$ is the unique critical point of the potential function $f_N.$ Furthermore, there exist $0<\eta_0 < L$ and functions $r_s:Im(\Phi_s)\longrightarrow [\Lambda\sqrt{s},1],$ defined by 
    \begin{align*}
        r_s=max\left\{ (\pi_1\circ \Phi_{s}^{-1})^*\mathbf{r_s},\Lambda\sqrt{s}\right\},
    \end{align*}
where $\pi_1: N\times \mathbb{R} \longrightarrow N$ is the natural projection, and $\displaystyle{l_s: Im(\Phi_s)\longrightarrow \mathbb{R} }$ given by
\begin{align*}
    l_s= \pi_2 \circ \Phi_s^{-1}, 
\end{align*}
where $\pi_2: N\times \mathbb{R} \longrightarrow \mathbb{R}$ is the projection on the second coordinate, such that for all $p\in Im(\Phi_s),$ the following holds. If $\Phi_s(x,l)=p,$ then
\begin{align*}
    {\Phi_s\left( \{q_{\max}\}\times[l -\eta_0/2,l +\eta_0/2]\right)\subset \Gamma_s}
\end{align*}
is isometric to $[-\eta_0/2,\eta_0/2]\subset \mathbb{R}$ and
\begin{align}\label{closeToCone}
    \sum_{j=0}^{4}r^j\left| \left(\nabla^G\right)^j\left((\Phi_s\circ \tilde{F}_s)^* g- G\right)\right|_G +r^j\left|(\nabla^G)^j(\tilde{F}_{s}^{*}g_{\exp}(s)-G)\right|_G < \delta
\end{align}
in $[\Lambda\sqrt{s},1]\times  X \times [-\eta_0/2,\eta_0/2],$ where $G=g_c\otimes\gcan$ is the product metric on $C(\mathbb{S}^{n-1})\times\mathbb{R}$ with the same assumptions as in Theorem \ref{mainTheorem}, and
\begin{align}\label{closeToExp}
    \left|\Phi_{s}^{*}g-g_{\exp}(s)\right|_{g_{\exp}(s)} <\delta
\end{align}
in $\{\mathbf{r_s} \leq 2(\Lambda+1)\sqrt{s}\} \times [-\eta_0/2,\eta_0/2].$ 
\end{defn}

\section{Stability Under Ricci Flow}\label{stabilitySection}
This section is dedicated to proving stability of $
\expander.$ We generalise, to our particular case, the weak stability for expanders with positive curvature operator of Deruelle--Lamm \cite{Der-Lamm}. We follow \cite{Der-Lamm} as much as we can, pointing out when new ideas are needed. We observe that, for consistency with the literature, in this section we have $\displaystyle{{g}_0(t)=g_N(t)\otimes g_{\mathbb{R}}(t)},$ $\displaystyle{g_N(t)=(1+t)(\phi_N^t)^*g_N}$ and $\displaystyle{g_{\mathbb{R}}(t)=(1+t)(\phi_{\mathbb{R}}^t)^*g_{\mathbb{R}}},$ where $\phi^t_N$ is generated by $-\frac{1}{1+t}\nabla f_N$ with $\phi^0_N=id_N$ and the definition of $\phi_{\mathbb{R}}^t$ is analogous. Thus, ${g_{\exp}}(1+t)=g_0(t).$
\subsection{The set up}

Let $g(t)$ be a solution to Ricci flow on $N\times \mathbb{R}$ for $t\geq 0$ such that 
\begin{align}
    g(0)={g}_0(0)+h,
\end{align}
where $h \in S^2T^*M$ is such that $g(0)$ is a metric. We study the following associated problem (see section 2 of \cite{Der-Lamm} for the details). Let $\displaystyle{h(t):= g(t)-{g}_0(t) }$ and consider $\Bar{h}(t)=(1+t)(\tilde{\phi}_t)^*h(\ln(1+t)),$ where $\tilde{\phi}_t=(\phi_N^t,\phi_{\mathbb{R}}^t).$ Then the evolution of $h(t)$ can be written as
\begin{align}\label{MRDF}
    (\partial_t - L_t)\Bar{h}=R_0[\Bar{h}]+\nabla^{g_0(t)}R_1[\Bar{h}],
\end{align}
with 
\begin{align*}
    R_0[\Bar{h}]:= \Bar{h}^{-1}*\Bar{h}*Rm(g_0(t))+ (g(t))^{-1}*(g(t))^{-1}*\nabla^{g_0(t)}\Bar{h}(t)*\nabla^{g_0(t)}\Bar{h}(t),
\end{align*}
\begin{align*}
    \nabla^{g_0(t)}R_1[\Bar{h}]_{ij}:= \nabla_{k}^{g_0(t)}\left(\left( (g_0(t)+\Bar{h})^{kl}-g_0(t)^{kl}\right)\nabla^{g_0(t)}_{l}\Bar{h}_{ij}\right),
\end{align*}
where $L_t$ is the time dependent Lichnerowicz operator given by
\begin{align*}
     L_t\Bar{h}=\Delta_{g_0(t)}\Bar{h}+2Rm(g_0(t))*\Bar{h}-Ric(g_0(t))\otimes \Bar{h} - \Bar{h}\otimes Ric(g_0(t)), \hspace{0.5cm} t\geq 0. 
\end{align*}

We can also express the flow \eqref{MRDF} globally by:
\begin{align*}
    &\partial_t\Bar{g}(t)=-2Ric(\Bar{g}(t))+ \mathcal{L}_{V(\Bar{g}(t),{g}_0(t))}(\Bar{g}(t)),\\
    & \Bar{g}(t):= {g}_0(t)+\Bar{h}(t),
\end{align*}
where $\displaystyle{V(g(t),{g}_0(t)):= div_{g(t)}(g(t)-{g}_0(t)) -\frac{1}{2}\nabla^{g(t)}tr_{g(t)}(g(t)-{g}_0(t)) }.$

We define function spaces $X$ and $Y$ as in \cite{Koch-Lamm}. First, for $p\in (0,\infty]$ and a family of tensors $(h(t))_{t\geq 0},$ we define the average parabolic $L^p-$norm of $h$ as follows. Let $(\expander,g(t))_{t\geq 0}$ be a Ricci flow. Then

\begin{align*}
     \nor h \|_{L^p(C(x,0,R))}:= \left( \dashint_{C(x,0,R)} |h|^p_{g_0(s)}(y,s)d\mu_{g_0(s)}(y)ds\right)^{\frac{1}{p}}
\end{align*}
and
\begin{align*}
     C(x,0,R):= \{ (y,l,s)\in N\times\mathbb{R}\times \mathbb{R}^*_+ |\hspace{0.1cm} s\in (0,R^2],\hspace{0.1cm} & y\in B_{{g_N}(s)}(x,R),\\
     &l\in (-R,R) \}.
\end{align*}
We then define the function space $X$ as the completion of $\displaystyle{\{h \hspace{0.1cm}|\hspace{0.1cm}\|h\|_{X}<+\infty\}}$ under its norm, where
\begin{align*}
\|h\|_X&:=\sup_{t\geq 0}\|h(t)\|_{L^{\infty}(N\times \mathbb{R},{g_0}(t))} \\
    &+\sup_{N\times\mathbb{R}\times \mathbb{R}_+^*}\left(R\nor  \nabla h\|_{L^2(C(x,l,R))}+\nor\sqrt{t}\nabla h\|_{L^{n+4}\left(C(x,l,R)\setminus C\left(x,l,\frac{R}{\sqrt{2}}\right)\right)}\right),
\end{align*}
and, analogously, the space $Y$ as 
\begin{align*}
    Y:= Y_0 + \nabla Y_1
    := \{ &(R_0(t) + \nabla^{{g}_0(t)}_i R^i_1(t))_{t\geq 0} |
    (R_0(t))_{t\geq 0}\subset  S^2T^*(N\times\mathbb{R}) ;\\
    &(R_1(t))_{t\geq 0 }\subset S^2T^*(N\times\mathbb{R}) \otimes \Gamma(T(N\times\mathbb{R}))\},
\end{align*}
where the norm is given by 
\begin{align*}
\|R_0\|_{Y_0}:=\sup_{(x,l,R)\in N\times\mathbb{R}\times\mathbb{R}_+^*}\bigg(&R^2\nor R_0\|_{L^1( C(x,l,R))}\\
    &+R^2\nor R_0\|_{L^{\frac{n+4}{2}}\left(C(x,l,R)\setminus C\left(x,l,\frac{R}{2}\right)\right)}\bigg),
\end{align*}
and
\begin{align*}
    \|R_1\|_{Y_1}:=\sup_{(x,l,R)\in N\times\mathbb{R}\times\mathbb{R}_+^*}\bigg(&R\nor R_1\|_{L^2(C(x,l,R))}\\
    &+\nor \sqrt{t}R_1\|_{L^{n+4}\left(C(x,l,R)\setminus C\left(x,l,\frac{R}{2}\right)\right)}\bigg).
\end{align*}

The following is a minor extension of \cite[Lemma 3.1]{Der-Lamm}. For completeness, we provide their proof below, adding the small modifications needed to extend the result to $\expander.$ 

\begin{lem}\label{lemma31DerLamm}
Let $(N^n, g_N,f_N),$ $n\geq 3,$ be an expanding gradient Ricci soliton with quadratic curvature decay. Consider $(\expander,g_0,f_0)$ as in Section \ref{preliminaries}. Then, for any $\gamma \in (0,1),$ the operator $R_0[\cdot] + \nabla R_1[\cdot]: B_X(0,\gamma)\subset X \to Y $ is analytic and satisfies 
\begin{align*}
\|R_0[h]+\nabla R_1[h]\|_Y \leq c(n,\gamma,g_0)\|h\|_X^2, 
\end{align*}
and
\begin{align*}
\|R_0[h']-R_0[h]+\nabla(R_1[h']-R_1[h])\|_Y\leq c(\|h'\|_X + \|h\|_X)\|h'-h\|_X,
\end{align*}
for any $h,h' \in B_X(0,\gamma),$ where $c=c(n,\gamma,g_0).$

\end{lem}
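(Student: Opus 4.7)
The plan is to follow Deruelle--Lamm's proof of their Lemma 3.1 essentially step by step, with the modifications that arise because we work on the product $\expander$ and because the relevant curvature decay now holds with respect to distance to the spine $\{q_{\max}\}\times\mathbb{R}$ rather than to a single tip. First I would establish analyticity on the ball $B_X(0,\gamma)\subset X$. Since $\gamma<1$ and the $L^\infty$ part of the $X$ norm controls $\|h\|_{g_0(t)}$ pointwise, the inverse $(g_0(t)+\bar h)^{-1}$ admits the convergent Neumann expansion $g_0(t)^{-1}\sum_{k\geq 0}(-\bar h\,g_0(t)^{-1})^k$. Plugging this into the explicit expressions for $R_0$ and $\nabla R_1$ exhibits them as absolutely convergent power series in $\bar h$ with values in $Y$, which gives analyticity once the quadratic estimate is in place.

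Next I would prove the quadratic bound $\|R_0[h]+\nabla R_1[h]\|_Y\leq c\|h\|_X^2$ by treating each schematic term separately. For the piece $\bar h\ast\bar h\ast Rm(g_0(t))$ of $R_0$, I would control its $L^1$ (and $L^{(n+4)/2}$) integral over a parabolic cylinder $C(x,l,R)$ by $\|h\|_{L^\infty}^2$ times the integral of $|Rm(g_0(t))|$. The key observation, already used implicitly in the setup, is that because $(\expander,g_0(t))$ is a product, $|Rm(g_0(t))|=|Rm(g_N(t))|$ depends only on the $N$-factor and decays like $\mathbf r^{-2}$ away from the spine by Lemma \ref{lemma21anal}. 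Combined with volume comparison on $N$ at scale $R$ and the product structure of $C(x,l,R)=B_{g_N(s)}(x,R)\times(-R,R)\times(0,R^2]$, this gives
\[
R^2\nor Rm(g_0)\|_{L^1(C(x,l,R))}+R^2\nor Rm(g_0)\|_{L^{(n+4)/2}(C(x,l,R)\setminus C(x,l,R/2))}\leq c(n,g_0),
\]
exactly as in the $N$-only setting. For the quadratic gradient term $(g(t))^{-1}\ast(g(t))^{-1}\ast\nabla\bar h\ast\nabla\bar h$, I would use that the Neumann expansion gives $(g(t))^{-1}=g_0(t)^{-1}+O(\|h\|_X)$ uniformly, reducing the matter to bounding $\nor|\nabla\bar h|^2\|_{L^1(C)}$ by $\nor\nabla\bar h\|_{L^2(C)}^2$, which is precisely the $R^{-2}$-part of the $X$ norm squared; the $L^{n+4}$ weighted piece on the annulus $C\setminus C(x,l,R/\sqrt 2)$ is handled similarly with Hölder's inequality and the $\sqrt t\,\nabla h$ term in $X$.

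For $\nabla R_1[\bar h]$, whose schematic form is $\nabla\bigl(((g_0+\bar h)^{-1}-g_0^{-1})\nabla\bar h\bigr)$, I would estimate the coefficient by $\|\bar h\|_{L^\infty}$ using the Neumann series and obtain
\[
R\nor R_1\|_{L^2(C)}\leq c\|h\|_{L^\infty}\bigl(R\nor\nabla\bar h\|_{L^2(C)}\bigr),\qquad \nor\sqrt t R_1\|_{L^{n+4}(C\setminus C(x,l,R/2))}\leq c\|h\|_{L^\infty}\nor\sqrt t\nabla\bar h\|_{L^{n+4}}.
\]
Taking the supremum over $(x,l,R)$ and multiplying the $L^\infty$ factor against the gradient factor yields $\|\nabla R_1[h]\|_Y\leq c\|h\|_X^2$. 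The Lipschitz estimate follows from the same computations applied to the identity
\[
R_0[h']-R_0[h]+\nabla(R_1[h']-R_1[h])=Q(h,h')\cdot(h'-h),
\]
where $Q(h,h')$ is linear in $h,h'$ with coefficients uniformly bounded on $B_X(0,\gamma)$ via the Neumann expansion; applying the quadratic machinery to the two factors gives the claimed bound proportional to $(\|h\|_X+\|h'\|_X)\|h'-h\|_X$.

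The main obstacle I anticipate is not conceptual but bookkeeping: one must verify carefully that all the weighted space-time integral estimates of Deruelle--Lamm survive the replacement of balls in $N$ by $B_{g_N(s)}(x,R)\times(-R,R)$, in particular that the $L^{(n+4)/2}$-curvature bound on parabolic annuli still holds when the annulus in $\expander$ is not the product of annuli. This is handled by splitting the annulus $C(x,l,R)\setminus C(x,l,R/2)$ into the two pieces where either the $N$-radius exceeds $R/2$ or $|l'-l|>R/2$, using quadratic curvature decay from the spine on the first piece and a direct volume/integral computation on the second piece (where $|Rm|$ is controlled pointwise and the $\mathbb{R}$-slab has finite width $R$), after which the sup over $(x,l,R)$ behaves just as in the $N$-only case.
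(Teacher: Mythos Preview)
Your proposal is correct and follows essentially the same route as the paper's proof: both reduce to Deruelle--Lamm's Lemma 3.1, identify the zeroth-order curvature term $\bar h\ast\bar h\ast Rm(g_0(t))$ as the only piece that is not routine, and use the product structure together with $|Rm(g_0(t))|=|Rm(g_N(t))|$ to reduce the relevant curvature integrals to the $N$-only case. The paper is in fact briefer than your outline---it only explicitly treats the $L^1$ curvature integral by containing $B_{g_0(s)}(\tilde x,R)$ in a product cylinder $B_{g_N(s)}(x,2R)\times(-2R,2R)$ and applying Fubini, declaring everything else (including the $L^{(n+4)/2}$ annulus piece you flag) as directly checked; your annulus-splitting precaution is sound but unnecessary once you observe that $|Rm(g_0)|$ is independent of the $\mathbb{R}$-coordinate, so the same containment-and-Fubini trick works uniformly.
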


\begin{proof}
Most of the estimates can be directly checked. The only non-trivial estimate is the zeroth order quadratic term of $R_0[h],$ which relies on the decay of curvature tensor of $g_N$. First, observe that the metric $g_N(t)$ also has quadratic curvature decay, and if $p\in N$ is such that $\nabla^g_N f_N(p)=0,$ then
\begin{equation*}
    |Rm(g_N(t))|_{g_N(t)}(x)\leq \frac{c}{1+t+d_{g_N(t)}^2(p,x) }
\end{equation*}
for $t\geq 0,$ $x\in N$ and $c>0$ independent of time. Therefore,
\begin{align*}
    \dashint_{B_{g_N(t)}(x,R)}|Rm(g_N(t))|d\mu_{g_N(t)}\leq c\dashint_{B_{g_N(t)}(x,R)}\frac{1}{1+t+d_{g_N(t)}^2(p,y)}d\mu_{g_N(t)},
\end{align*}
for any non-negative $t.$ If $d_{g_N(t)}(p,x)\geq 2R,$ then $d_{g_N(t)}(p,y)\geq R$ for any $y\in B_{g_N(t)}(x,R)$ and 
\begin{align*}
    \dashint_{B_{g_N(t)}(x,R)}\frac{1}{1+t+d_{g_N(t)}^2(p,y)}d\mu_{g_N(t)} \leq \frac{c}{1+R^2,}
\end{align*}
for a positive constant $c,$ uniform in time, space and radius $R.$ If $d_{g_N(t)}(p,x)\leq 2R,$ then the co-area formula yields
\begin{align*}
    \int_{B_{g_N(t)}(x,R)} & \frac{1}{1+t+d_{g_N(t)}^2(p,y)}d\mu_{g_N(t)}\leq \int_{B_{g_N(t)}(x,3R)}\frac{1}{1+t+d_{g_N(t)}^2(p,y)}d\mu_{g_N(t)}\\
    &\leq c(n)\int_{0}^{3R}\frac{r^{n-1}}{1+r^2}dr\leq c(n)R^{n-2},
\end{align*}
if $n\geq 3.$ In any case,
\begin{align*}
    \int_{0}^{R^2}\dashint_{B_{g_N(t)}(x,R)}|Rm(g_N(t))|d\mu_{g_N(t)}dt \leq c,
\end{align*}
which is enough to prove the result in \cite{Der-Lamm}. 

We now show the same bounds for $N\times\mathbb{R}$ with $g_0(t)=g_N(t)\otimes\gcan(t).$ First, we have $\displaystyle{|Rm({g}_0(t))|_{{g}_0(t)}=|Rm({g}_N(t))|_{g_N(t)}},$ since  $\displaystyle{Rm({g}_0(t))(\partial_r,\cdot,\cdot,\cdot)\equiv 0}.$ The integral over a ball $B_{g_0(s)}(\tilde{x},R)$ in $\expander$ can be estimated by an integral over a cylinder $B_{g_N(s)}(x,2R)\times (-2R,2R),$ where we are assuming, without loss of generality, that $\tilde{x}=(x,0).$ Therefore, using the result above, we have
\begin{align*}
&\int_{0}^{R^2}\dashint_{C_{{g}_0(t)(\tilde{x},2R)}}|Rm({g}_0(t))|d\mu_{{g}_0(t)}dt \\
&\leq c(n)\int_{0}^{R^2}\dashint_{-2R}^{2R}\dashint_{B_{g_N(t)({x},2R)}}|Rm({g}_0(t))|d\mu_{{g}_N(t)}drdt\\
& =\dashint_{-2R}^{2R}\int_{0}^{R^2}\dashint_{B_{g_N(t)({x},2R)}}|Rm({g}_N(t))|d\mu_{{g}_N(t)}dtdr\\
&\leq c(n)\frac{1}{R}\int_{-2R}^{2R}c(n)dr = c(n).
\end{align*}
\end{proof}

\subsection{The homogeneous case}

When studying equation \eqref{MRDF}, the first step is to estimate the solution to the homogeneous linear equation 
\begin{align}\label{homogeneous}
    \dt h=L_t h, \hspace{0.5cm} h(0)=h_0 \in L^\infty(S^2T^*(N\times \mathbb{R})),
\end{align}
for $t\geq 0.$ In \cite[Theorem 4.1]{Der-Lamm}, the authors show that for a positively curved expander $(N,g_N(t))$ with quadratic curvature decay, we get $\displaystyle{\| h\|_X \leq c \|h_0\|_{L^\infty(N,g_N)}.}$ In our case, $(N\times \mathbb{R}, {g}_0(t))$ only has non-negative curvature operator. Furthermore, the curvature decay does not hold along the $\mathbb{R}-$direction. However, just like we did for Lemma \ref{lemma31DerLamm}, we can still prove similar estimates. We state this below and provide details of the parts where our proof is different from \cite{Der-Lamm}.

\begin{thm}\label{homogEst}
Let $(N^n \times \mathbb{R},{g}_0(t))_{t\geq 0},$ $n\geq 3,$ be an expanding gradient Ricci soliton with the usual product metric. Assume that $(N^n,g_N(t))$ has positive curvature operator and quadratic curvature decay. Let $(h(t))_{t\geq 0}$ be a solution to the homogeneous linear equation \eqref{homogeneous}. Then $h(t)\in X$ for every $t\geq 0$ and $\displaystyle{\| h\|_X \leq c \|h_0\|_{L^\infty(N\times\mathbb{R},{g}_0)}. }$
\end{thm}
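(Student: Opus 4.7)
The plan is to follow the strategy of Theorem 4.1 in \cite{Der-Lamm} and reduce the bound for $\|h\|_X$ to pointwise Gaussian estimates for the time-dependent Lichnerowicz heat kernel $K_L$ on $(\expander, g_0(t))$, exploiting the product structure to transfer the relevant heat-kernel estimates from $N$ to $\expander$. Writing the solution as
\begin{align*}
    h(x,l,t)=\int_{\expander} K_L(x,l,t;y,m,0)*h_0(y,m)\, d\mu_{g_0(0)}(y,m),
\end{align*}
the starting observation is that, because $g_0(t)=g_N(t)\otimes \gcan(t)$ with $\gcan(t)\equiv \gcan$ and $Ric(g_0(t))$, $Rm(g_0(t))$ have no $\mathbb{R}$--components, the tensor Lichnerowicz operator splits as $L_t=L_t^N+\partial_l^2$. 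Hence $K_L$ factors:
\begin{align*}
    K_L(x,l,t;y,m,s)=K_L^N(x,t;y,s)\, K^{\mathbb{R}}(l-m,t-s),
\end{align*}
where $K^{\mathbb{R}}$ is the standard one--dimensional Euclidean heat kernel.

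The next step is to combine the Gaussian bound for $K_L^N$ from \cite{Der-Lamm}, which uses the positivity of the curvature operator of $g_N(t)$ and the quadratic curvature decay, with the classical Gaussian bound for $K^{\mathbb{R}}$, yielding
\begin{align*}
    \|K_L(x,l,t;y,m,s)\|\leq \frac{c}{(t-s)^{(n+1)/2}}\exp\!\left(-\frac{d^2_{g_0(s)}((x,l),(y,m))}{D(t-s)}\right),
\end{align*}
using $d^2_{g_0(s)}((x,l),(y,m))=d^2_{g_N(s)}(x,y)+|l-m|^2$. Integrating this Gaussian against $h_0$ gives the $L^{\infty}$ part of the norm $\|h\|_{X}$ directly; the total $L^1$--mass of the Gaussian is uniformly bounded in time, producing $\|h(t)\|_{L^\infty}\leq c\|h_0\|_{L^\infty}$.

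For the weighted gradient terms in $\|\cdot\|_X$, I would run the same computations as in \cite[\S 4]{Der-Lamm} but now on parabolic cylinders $C(x,l,R)\subset \expander\times\mathbb{R}_+^*$. Using the inclusion $B_{g_0(s)}((x,l),R)\subset B_{g_N(s)}(x,R)\times (l-R,l+R)$ and Fubini, the averaged $L^2$ and $L^{n+4}$ bounds for $\nabla h$ and $\sqrt{t}\nabla h$ reduce to the analogous estimates on $(N,g_N(t))$ proved in \cite{Der-Lamm}, multiplied by integrals of the one--dimensional Gaussian along $\mathbb{R}$, which are manifestly bounded. The Lemma \ref{lemma31DerLamm} style argument shows that the missing curvature decay along the $\mathbb{R}$--direction causes no trouble because $|Rm(g_0(t))|_{g_0(t)}=|Rm(g_N(t))|_{g_N(t)}$ and the extra line integrates harmlessly.

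The main obstacle is the careful bookkeeping required to pass derivative Gaussian bounds for $K_L^N$ through the tensor splitting: one must verify that spatial derivatives in the $N$--variable retain the same Gaussian form after multiplication by $K^{\mathbb{R}}$, and that the parabolic volumes of $B_{g_0(s)}((x,l),R)$ compare to $R\cdot\mathrm{Vol}_{g_N(s)}(B_{g_N(s)}(x,R))$ up to universal constants. Once these are in place, each term in the definition of $\|h\|_X$ is controlled in the same way as in \cite[Theorem 4.1]{Der-Lamm}, yielding the claimed estimate $\|h\|_X\leq c\|h_0\|_{L^\infty(\expander,g_0)}$.
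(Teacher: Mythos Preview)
Your proposal differs from the paper's proof and contains a gap in the $L^2$ gradient estimate.

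First, the paper does \emph{not} use the heat kernel representation to prove this theorem, and neither does \cite[Theorem~4.1]{Der-Lamm}: there the $L^\infty$ bound comes from the maximum principle (using $Ric\geq 0$), the $L^{n+4}$ bound from local parabolic regularity (using that the flow is Type~III and non-collapsed), and the $L^2$ gradient bound from an energy/cutoff argument. The Gaussian heat kernel bounds appear only later in \cite{Der-Lamm} and are used for the \emph{inhomogeneous} problem. The paper follows the same pattern: the $L^\infty$ and $L^{n+4}$ parts go through verbatim since $Ric(g_0)\geq 0$ and the product flow is still Type~III and non-collapsed; the substantive work is the $L^2$ estimate.

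Second, your Fubini reduction for the gradient terms does not work as stated. For fixed $l$, the slice $h(\cdot,l,\cdot)$ is \emph{not} a solution of the homogeneous Lichnerowicz equation on $(N,g_N(t))$: it satisfies $(\partial_t-L_t^N)h(\cdot,l,\cdot)=\partial_l^2 h(\cdot,l,\cdot)$, with a source. So you cannot invoke the $N$--estimate from \cite{Der-Lamm} slice by slice and then integrate in $l$ against a one--dimensional Gaussian. If instead you try to use the pointwise gradient bound $|\nabla h|\leq c\,t^{-1/2}\|h_0\|_\infty$ coming from the Gaussian estimate on $\nabla_x K_L$, the resulting $1/t$ in $|\nabla h|^2$ is not integrable down to $t=0$, so the averaged $L^2$ norm over $C(x,l,R)$ diverges.

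What the paper actually does for the $L^2$ part is rerun the energy argument directly on $N\times\mathbb{R}$ with a product cutoff $\psi(\tilde y,t)=\xi(d_{g_N(t)}(x,y)/R)\,\xi(l/R)$. The only place where the extension from $N$ to $N\times\mathbb{R}$ requires care is the term $\int \psi^2 |Rm(g_0(t))|\,|h|^2$, and here the reduction \emph{does} go through Fubini, because $|Rm(g_0(t))|_{g_0(t)}=|Rm(g_N(t))|_{g_N(t)}$ is independent of $l$: one integrates the extra $\mathbb{R}$--variable over $(-2R,2R)$ and applies the quadratic-decay bound $\int_0^{R^2}\fint_{B_{g_N(t)}(x,2R)}|Rm(g_N(t))|\,d\mu\,dt\leq c$ already established in the proof of Lemma~\ref{lemma31DerLamm}. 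That is the correct place for the product structure to enter, not at the level of the final gradient estimate.
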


\begin{proof}
Given the norm defined for $X,$ we need an $L^\infty$ estimate, an $L^2$ and an $L^{n+4}$ estimate. The $L^\infty$ estimate follows directly from the fact that the Ricci curvature is non-negative for the expander $N\times \mathbb{R}.$ Similarly, for the $L^{n+4}$ estimate to work we only need the extra facts that the solution is of Type III, i.e.,
\begin{align*}
    |Rm(g_0(t))|_{g_0(t)} + \sqrt{t}|\nabla^{g_0(t)}Rm(g_0(t))|_{g_0(t)}\leq \frac{c}{1+t},
\end{align*}
and the Ricci flow is non-collapsed. All of these still hold when we take the product of the expander with $\mathbb{R}$ and consider the product metric ${g}_0(t)=g_N(t)\otimes g_{\mathbb{R}}(t).$

For the $L^2$ estimate, we need to be more careful. Our goal is to show 
\begin{align*}
    R \nor \nabla h\|_{L^2(C(x,l,R))}\leq c \|h_0\|_{L^{\infty}(\expander)},
\end{align*}
for arbitrary $(x,l,R)\in \expander\times \mathbb{R}_+^*.$ This estimate heavily relies on the quadratic decay of the curvature. Computing the evolution equation for $|h|^2,$ where the norm is always with respect to ${g}_0(t),$ we get
\begin{align*}
    \dt |h|^2 \leq \Delta_{g_0(t)} |h|^2 -2 |\nabla^{g_0(t)}h|^2 +c(n+1)|Rm(g_0(t))\|h|^2.
\end{align*}
We multiply this inequality by a smooth cut-off function $\displaystyle{ \psi^2:N\times\mathbb{R}\times\mathbb{R}_+ \longrightarrow \mathbb{R}_+}$ and integrate by parts in space to get
\begin{align*}
    & \dt \int_{N\times\mathbb{R}} \psi^2|h|^2d\mu_{g_0(t)} + 2\int_{N\times \mathbb{R}} \psi^2|\nabla^{g_0(t)}h|^2d\mu_{g_0(t)}  \\
   & \leq  \int_{N\times\mathbb{R}} \left[ \left< -\nabla^{g_0(t)}\psi^2, \nabla^{g_0(t)}|h|^2\right>
     +\left(\dt \psi^2 +c(n)\psi^2|Rm(g_0(t))|\right)|h|^2\right]d\mu_{g_0(t)}\\
   &  \leq \int_{N\times\mathbb{R}}\left[ 4\psi|\nabla^{g_0(t)}\psi\|h\|\nabla^{g_0(t)}h|+\left(\dt \psi^2 +c(n)\psi^2|Rm(g_0(t))|\right)|h|^2\right]d\mu_{g_0(t)}. 
\end{align*}

Young's inequality applied to $4\psi|\nabla^{g_0(t)}\psi\|h\|\nabla^{g_0(t)}h|$ yields 
\begin{align*}
    \dt \int_{N\times\mathbb{R}} &\psi^2|h|^2d\mu_{g_0(t)}  + \int_{N\times \mathbb{R}} \psi^2|\nabla^{g_0(t)}h|^2d\mu_{g_0(t)}\\
    &\leq c(n+1)\int_{N\times\mathbb{R}}\left( |\nabla^{g_0(t)}\psi|^2+\dt\psi^2+\psi^2|Rm(g_0(t))|\right)|h|^2d\mu_{g_0(t)}.
\end{align*}

Let $(\tilde{x},R)\in N\times\mathbb{R}\times\mathbb{R}_+$ and, without loss of generality, assume $\tilde{x}=(x,0).$ We now choose an explicit cut-off function as
\begin{align*}
    \psi_{\tilde{x},R}(\Tilde{y},t):= \xi\left(\frac{d_{g_N(t)}(x,y)}{R}\right)\xi\left(\tfrac{l}{R}\right),
\end{align*}
where $\Tilde{y}=(y,l),$ $\xi:\mathbb{R}_+ \rightarrow \mathbb{R}_+$ is a smooth function satisfying $\xi_{|_{[0,1]}}=1,$ $\xi_{|_{[2,\infty)}}=0$ and $\displaystyle{\sup_{\mathbb{R}_+}|\xi'|\leq c.}$ Therefore, $\psi_{\tilde{x},R}$ is a Lipschitz function such that  ${| \nabla^{g_0(t)}\psi_{\tilde{x},R}|\leq \frac{c}{R}  }$ and $\displaystyle{0\leq \dt \psi_{\tilde{x},R}\leq \frac{c}{R\sqrt{t}} }$ by the following distortion estimate: for a type III Ricci flow $(M,g(t))$ with non-negative Ricci curvature, we have
 \begin{align*}
     d_{g}(x,y)- c(A_0)\left(\sqrt{t}-\sqrt{s}\right)\leq d_{g_0(t)}(x,y) \leq d_{g_0(s)}(x,y),
 \end{align*}
for $s\leq t$ and for $x,y\in M.$ For details on this estimate, see \cite[Lemma A.3]{Der-Lamm}.

Now we integrate the estimate above in time from $0$ to $R^2$ to obtain

\begin{align*}
    \int_{-R}^R& \int_{B_{g_N(R^2)}({x},R)}|h|^2d\mu_{g_N(R^2)}dr+\int_{0}^{R^2}\int_{-R}^R\int_{B_{g_N(t)}({x},R)}|\nabla^{g_0(t)}h|^2d\mu_{g_N(t)}drdt \\
    &\leq  \int_{-2R}^{2R}\int_{B_{g_N(0)}({x},R)}|h(0)|^2d\mu_{g_N(0)}dr \\
     &+ c(n)\int_{-2R}^{2R}\int_{0}^{R^2} \int_{B_{g_N(t)}({x},2R)} \left( \tfrac{1}{R^2}+\tfrac{1}{R\sqrt{t}}+ |Rm(g_0(t))|\right) |h(t)|^2d\mu_{g_N(t)}drdt.
\end{align*}

Recalling that $AVR(g_N(t))=AVR(g_N)>0,$ we get $Vol_{g_0(t)}(B_{{g_0(t)}}(x,s)) \geq AVR(g_0)s^{n+1} $  for any $s>0$ by the Bishop-Gromov theorem. Furthermore, Bishop-Gromov once again yields $Vol_{g_0(t)}(B_{{g_0(t)}}(x,s))\leq c(n)s^{n+1},$ where we are using that $Ric(g_0(t))\geq 0.$ Therefore, applying the $L^\infty$ estimate $\displaystyle{\sup_{t\geq 0}\|h(t)\|_{L^{\infty}(N\times \mathbb{R},{g_0}(t))}\leq c \|h_0\|_{L^{\infty}(\expander)}}$ as in \cite{Der-Lamm}, we have:

\begin{align*}
    & R^2 \frac{1}{R^{(n+1)+2}}\| \nabla h\|^2_{L^2(C({x},0,R))}\\
    &\leq c\|h(0)\|_{L^\infty(N\times \mathbb{R},g_0)}^2\left( 1 + \tfrac{1}{R^{n+1}}\int_{0}^{R^2}\int_{-2R}^{2R}\int_{B_{g_N(t)({x},2R)}}|Rm({g}_0(t))|d\mu_{{g}_N(t)}drdt \right) \\
    & = c\|h(0)\|_{L^\infty(N\times \mathbb{R},g_0)}^2\left( 1 + \tfrac{1}{R}\int_{0}^{R^2}\int_{-2R}^{2R}dr\dashint_{B_{g_N(t)({x},2R)}}|Rm({g}_N(t))|d\mu_{{g}_N(t)}dt \right) \\
    & \leq c\|h(0)\|_{L^\infty(N\times \mathbb{R},g_0)}^2\left(1+\tfrac{1}{R}\int_{-2R}^{2R}cdr\right)=c\|h(0)\|_{L^\infty(N\times \mathbb{R},g_0)}^2,
\end{align*}
where $c=c(n)$ and we are using that $|Rm(g_0(t))|_{g_0(t)}=|Rm({g}_N(t))|_{g_N(t)}.$ Furthermore, on the last line, the estimate on the average integral follows as in the proof of Lemma \ref{lemma31DerLamm}.
\end{proof}

\subsection{Heat kernel estimates}
The next step is to prove heat kernel estimates. Again, many of the estimates available for the heat kernel acting on functions over $(N,g_N,f_N)$ only use that the solution to the Ricci flow is type III, non-collapsed and has non-negative Ricci/scalar curvature. Thus, they still hold and will be useful for our set-up. We move on to proving on-diagonal bounds of the Lichnerowicz heat kernel. 

We recall the setup from \cite{Der-Lamm}. Let $(M,g(t))_{t\in [0,T)}$ be a complete Ricci flow with bounded curvature at all times. The heat equation coupled with Ricci flow is given by
\begin{align}\label{heatflow}
    & \dt u= \Delta_{g(t)}u + R(g(t)) u,\\
  \nonumber  & \dt g= -2 Ric(g(t)),
\end{align}
on $M\times (0,T).$ We also consider the conjugate heat equation:
\begin{align}\label{conjheatflow}
    &\frac{\partial}{\partial \tau}u=\Delta_{g(\tau)}u,\\
    \nonumber & \frac{\partial}{\partial \tau}u=2Ric(g(\tau))
\end{align}
on $M\times (0,t]$ where $\tau(s):= t-s$ for $t>0$ fixed. 

Following standard notation, we denote the heat kernel associated to \eqref{heatflow} by $K(x,t,y,s),$ for $0\leq s < t < T,$ and $x,y\in M.$ It is defined by:
\begin{align}\label{scalarkernel}
\nonumber    &\dt K(\cdot,\cdot,y,s)=\Delta_{g(t)}K(\cdot,\cdot,y,s)+ R(g(t))K(\cdot,\cdot,y,s),\\
    & \dt g=-2Ric(g(t)),\\
\nonumber    & \lim_{t\to s}K(\cdot,t,y,s)=\delta_y,
\end{align}
where $(y,s)\in M\times (0,T)$ are fixed. Now, if $(x,t)\in M \times(0,T)$ are fixed, then $K(x,t,\cdot,\cdot)$ is the heat kernel associated to the conjugate heat equation:
\begin{align}
    \nonumber & \frac{\partial}{\partial s}K(x,t,\cdot,\cdot)=\Delta_{g(\tau)}K(x,t,\cdot,\cdot),\\
    & \frac{\partial}{\partial s}g=2Ric(g(\tau)),\\
    \nonumber & \lim_{\tau \to 0}K(x,t,\cdot,\tau)=\delta_x.
\end{align}

We now state some estimates for the heat kernel $K_L$ associated to the Lichnerowicz operator. Recall that $K_L\in Hom(S^2T^*(N\times\mathbb{R}), S^2T^*(N\times\mathbb{R}))$ is $C^1$ with respect to time and $C^2$ with respect to space variables, and it satisfies, by definition,
\begin{align}\label{tensorkernel}
    \nonumber &\dt K_L(\cdot,\cdot,y,s)=L_t K_L(\cdot,\cdot,y,s),\\
    &\dt g_0=-2Ric(g_0(t),\\
    \nonumber & \lim_{t\to s}K_L(\cdot,t,y,s)=\delta_y,
\end{align}
for $y\in N\times\mathbb{R}$ and $s\in[0,\infty)$ fixed. On the other hand, fixing $(x,t),$ $K_L$ satisfies the conjugate backward heat equation:
\begin{align}
    \nonumber & \frac{\partial}{\partial s}K_L(x,t,\cdot,\cdot)=-L_sK_L(x,t,\cdot,\cdot)+R(g_0(s))K_L(x,t,\cdot,\cdot),\\
    & \frac{\partial}{\partial s}g=2Ric(g_0(\tau)),\\
    \nonumber & \lim_{\tau \to 0}K_L(x,t,\cdot,\tau)=\delta_x.
\end{align}

We first recall the following diagonal bound for the Lichnerowicz heat kernel on an expanding Ricci soliton with positive curvature operator from \cite[Theorem 4.5]{Der-Lamm}:
\begin{align}\label{diagbound}
    \| K_L(x,t,y,s)\|_{Hom(S^2T_y^*(N),S^2T_x^*(N))} \leq \frac{c(n+1,V_0,A_0)}{(t-s)^{\frac{n+1}{2}}},
\end{align}
for $0\leq s < t$ and $x,y \in N.$ Since $K_L$ splits as $K^N_L K^{\mathbb{R}}_L,$ where $K^N_L$ and $K^{\mathbb{R}}_L$ are the respective Lichnerowicz heat kernels of $(N,g_N(t))$ and $(\mathbb{R},g_{\mathbb{R}}(t))$ (see, for instance, \cite{grigoryan}), the bound above can be trivially extended to $\expander.$ Furthermore, we also obtain the following off-diagonal bound as a straightforward application of Deruelle--Lamm's result.

\begin{thm}\label{heatKernelThm}
Let $(N^n\times\mathbb{R},g_0(t))_{t\geq 0}$ be an expanding gradient Ricci soliton such that $(N,g_N(t))$ is a Ricci soliton with positive curvature operator. Then the heat kernel associated to \eqref{tensorkernel} satisfies the following Gaussian estimate:
\begin{align*}
    \| K_L(x,t,y,s)\| \leq \frac{c(n,V_0,A_0,D)}{(t-s)^{\frac{n+1}{2}}}\exp\left\{ -\frac{d^2_{g_0(s)}(x,y)}{D(t-s)}\right\},
\end{align*}
for $0\leq s<t,$ $x,y\in N\times\mathbb{R}$ and $D>4,$ where $A_0:= \sup_{N\times\mathbb{R}}|Rm(g_0)|_{g_0},$ $V_0=\text{AVR}(g_0)$ and the norm is $\|\cdot\|= \| \cdot\|_{Hom(S^2T_y^*(N\times\mathbb{R}),S^2T_x^*(N\times\mathbb{R}))}.$

\end{thm}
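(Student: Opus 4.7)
The plan is to reduce the claimed tensor-valued Gaussian bound to two single-factor estimates by exploiting the Riemannian product structure of $(N\times\mathbb{R}, g_0(t)).$ Because $g_0(t) = g_N(t)\otimes g_{\mathbb{R}}(t)$ with $g_{\mathbb{R}}(t)\equiv g_{\mathbb{R}}$ flat and static, the tensors $Rm(g_0(t))$ and $Ric(g_0(t))$ pull back from $N$ and annihilate every vector tangent to the $\mathbb{R}$-factor. Consequently, writing $S^2T^*(N\times\mathbb{R}) = S^2T^*N \oplus (T^*N\otimes T^*\mathbb{R}) \oplus S^2T^*\mathbb{R},$ the time-dependent Lichnerowicz operator $L_t$ preserves this splitting and acts on each summand as $L_t^N + \Delta_{\mathbb{R}},$ where $L_t^N$ denotes the appropriate $N$-side Lichnerowicz-type operator on that block and $\Delta_{\mathbb{R}}$ is the flat scalar Laplacian in $l.$ These two pieces commute, so the heat kernel factorises as
\begin{align*}
K_L\bigl((x,l),t,(y,m),s\bigr) = K_L^N(x,t,y,s)\cdot K^{\mathbb{R}}(l,t,m,s),
\end{align*}
where $K^{\mathbb{R}}(l,t,m,s) = (4\pi(t-s))^{-1/2}\exp\bigl(-(l-m)^2/(4(t-s))\bigr)$ is the standard Euclidean heat kernel, as already observed in the sentence preceding \eqref{diagbound}.

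The second step is to substitute the off-diagonal Gaussian upgrade of \eqref{diagbound} for $K_L^N$ on $(N,g_N(t)),$ obtained by Deruelle--Lamm from the on-diagonal bound via a Davies/Grigoryan-type perturbation using the positivity of the curvature operator, the Type III decay and non-collapsing: for any $D>4,$
\begin{align*}
\|K_L^N(x,t,y,s)\| \leq \frac{c(n,V_0,A_0,D)}{(t-s)^{n/2}}\exp\left\{-\frac{d^2_{g_N(s)}(x,y)}{D(t-s)}\right\}.
\end{align*}
Multiplying by $K^{\mathbb{R}}$ combines the prefactors into $c(n,V_0,A_0,D)(t-s)^{-(n+1)/2}.$ For the exponents, the product-metric identity $d^2_{g_0(s)} = d^2_{g_N(s)} + (l-m)^2,$ together with the elementary observation $1/D < 1/4,$ yields
\begin{align*}
\frac{d^2_{g_N(s)}(x,y)}{D(t-s)} + \frac{(l-m)^2}{4(t-s)} \geq \frac{d^2_{g_0(s)}((x,l),(y,m))}{D(t-s)},
\end{align*}
and exponentiating the negatives gives exactly the bound asserted in the theorem.

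The main subtlety lies in making Step~1 precise at the level of tensor-valued kernels. The kernel $K_L$ is a section of $Hom(S^2T^*_{(y,m)}(N\times\mathbb{R}), S^2T^*_{(x,l)}(N\times\mathbb{R})),$ and one must verify that the decomposition of $S^2T^*(N\times\mathbb{R})$ is genuinely preserved both by the initial condition $\delta_{(y,m)}\cdot\mathrm{Id}$ and by $L_t,$ and that the Deruelle--Lamm off-diagonal Gaussian bound applies to each of the $N$-side block operators, not only the $S^2T^*N$ piece (the mixed $T^*N\otimes T^*\mathbb{R}$ block carries a Lichnerowicz-type operator on $T^*N,$ but the same ingredients -- positive curvature operator, Type III decay, non-collapsing -- yield the identical bound). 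Once the block structure and the commutativity of $L_t^N$ with $\Delta_{\mathbb{R}}$ are established, the product-semigroups formula produces the factorisation and the remainder is the routine combination of two Gaussians sketched above.
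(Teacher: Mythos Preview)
Your proposal is correct and follows essentially the same approach as the paper: factorise $K_L = K_L^N \cdot K^{\mathbb{R}}$ via the product structure, invoke the Deruelle--Lamm off-diagonal Gaussian bound (their Theorem~5.14) for $K_L^N$, use the explicit Euclidean heat kernel on $\mathbb{R}$, and combine the two Gaussians using $D>4$. Your discussion of the block decomposition of $S^2T^*(N\times\mathbb{R})$ and the need to check the Gaussian bound on each block is more careful than the paper, which simply asserts the splitting with a reference to Grigor'yan and proceeds directly to the multiplication.
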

\begin{proof}

Let $\|\cdot\|_{g_0}$ be the norm on $Hom(S^2T^*(N\times\mathbb{R}),S^2T^*(N\times\mathbb{R})).$ Consider points $(x,r_x)$ and $(y,r_y)$ in $N\times\mathbb{R}.$  From Theorem 5.14 in \cite{Der-Lamm} we have
\begin{align*}
   \| K_L^N(x,t,y,s)\|_{g_N}\leq  \frac{c(n,V_0,A_0,D)}{(t-s)^{\frac{n}{2}}}\exp\left\{ -\frac{d^2_{g_N(s)}(x,y)}{D(t-s)}\right\}.
\end{align*}
Therefore,
\begin{align*}
    \| &K_L((x,r_x),t,(y,r_y),s)\|_{g_0}\leq \| K_L^N(x,t,y,s)\|_{g_N}\|K^{\mathbb{R}}_{g_{\mathbb{R}}}(r_x,t,r_y,s) \|_{g_{\mathbb{R}}}\\
    & \leq \frac{c(n,V_0,A_0,D)}{(t-s)^{\frac{n}{2}}}\exp\left\{ -\frac{d^2_{g_N(s)}(x,y)}{D(t-s)}\right\} \frac{1}{\sqrt{4\pi (t-s)}}\exp \left\{ -\frac{|r_x-r_y|^2}{4(t-s)}\right\}
\end{align*}
since $K^{\mathbb{R}}_{g_{\mathbb{R}}}(r_x,t,r_y,s)=\frac{1}{\sqrt{4\pi (t-s)}}\exp \left\{ -\frac{|r_x-r_y|^2}{4(t-s)}\right\}.$ Finally,
\begin{align*}
    \| K_L((x,r_x),t,(y,r_y),s)\|_{g_0} \leq \frac{c(n+1,V_0,A_0,D)}{(t-s)^{\frac{n+1}{2}}}\exp\left\{ -\frac{d^2_{g_0(s)}(x,y)}{D(t-s)},         \right\}.
\end{align*}

\end{proof}

\subsection{The inhomogeneous case}

Having obtained bounds for the homogeneous linear equation and good estimates for the heat kernel on $(N\times\mathbb{R},g_0(t))_{t\geq 0},$ we can proceed to estimate the inhomogeneous equation. We state below Theorem \ref{inhBound} and Theorem \ref{stabilityTheorem}, and refer the reader to \cite{Der-Lamm} for the details. It is important to note, however, the dependence on the decay of the heat kernel on $\mathbb{R},$ in the form of Theorem \ref{heatKernelThm}.
\begin{thm}\label{inhBound}
Let $R$ be in $Y.$ Then, any solution to $(\partial_t-L_t)h=R$ with $h(0)=h_0\in L^{\infty}(N\times\mathbb{R},g_0)$ of the form $h=K_L*R,$ i.e.,
\begin{align*}
    h(x,t)=\int_{0}^t\int_{N\times\mathbb{R}} <K_L(x,t,y,s),R(y,s)>d\mu_{g_0(s)}(y)ds,
\end{align*}

is in $X$ and satisfies 
\begin{align*}
    \|h\|_X \leq c(\|h_0\|_{L^{\infty}(N\times\mathbb{R},g_0)}+\|R\|_Y),
\end{align*}
for some positive uniform constant $c.$
\end{thm}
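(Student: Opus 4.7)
The argument parallels the one carried out by Koch--Lamm \cite{Koch-Lamm} in Euclidean space and extended in \cite{Der-Lamm} to the expander $(N,g_N(t))$, and I would proceed in the same way, with the heat kernel estimates of Theorem \ref{heatKernelThm} now playing the role of the Gaussian bounds used there. By Duhamel's principle, any solution to $(\partial_t-L_t)h=R$ with $h(0)=h_0$ splits as $h = h_{\mathrm{hom}} + K_L * R$, where $h_{\mathrm{hom}}$ solves the homogeneous equation with initial datum $h_0$. Theorem \ref{homogEst} controls the homogeneous piece by $\|h_{\mathrm{hom}}\|_X \leq c\,\|h_0\|_{L^\infty(N\times\mathbb{R},g_0)}$, so the entire problem reduces to proving $\|K_L*R\|_X \leq c\,\|R\|_Y$.

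For this, I would decompose $R = R_0 + \nabla R_1$ in accordance with the definition of $Y$ and, in the $\nabla R_1$ term, integrate by parts so that the derivative lands on the kernel. The needed inputs are then the Gaussian bound of Theorem \ref{heatKernelThm} together with the companion gradient estimate
\[
\|\nabla^{g_0(s)} K_L(x,t,y,s)\| \leq \frac{c}{(t-s)^{(n+2)/2}}\exp\!\left(-\frac{d^2_{g_0(s)}(x,y)}{D(t-s)}\right),
\]
which follows by a standard Bernstein-type argument using that $g_0(t)$ is Type III with non-negative curvature operator and $AVR(g_0)>0$. With these bounds in hand, each of the three pieces defining $\|\cdot\|_X$ is estimated by the scheme familiar from \cite{Koch-Lamm, Der-Lamm}: split the $s$-integration dyadically in $t-s$, cover $N\times\mathbb{R}$ at each dyadic scale by parabolic cylinders of the appropriate size, and bound the resulting cylinder averages via H\"older's inequality against the $L^1$, $L^{(n+4)/2}$, $L^2$, and $L^{n+4}$ pieces of $\|R\|_{Y_0}$ and $\|R\|_{Y_1}$. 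The $L^\infty$ bound is the most immediate; the local $L^2$ and $L^{n+4}$ gradient bounds require differentiating the Duhamel formula once more and quoting the parabolic $L^p$--$L^q$ estimates of Koch--Lamm, which apply because the coefficients of $L_t$ are bounded and the flow is non-collapsed.

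The only point where the argument departs from \cite{Der-Lamm} is that the curvature of $g_0(t)$ does not decay in the $\mathbb{R}$-direction, so one cannot invoke the curvature decay of $N$ directly on $N\times\mathbb{R}$. This can be handled exactly as in the proof of Theorem \ref{homogEst}: the product splitting $K_L = K_L^N\,K_L^{\mathbb{R}}$ reduces every averaged cylinder estimate on $N\times\mathbb{R}$ to the product of the corresponding estimate on $N$, already proved in \cite{Der-Lamm}, and an explicit one-dimensional Gaussian integral, which integrates to a universal constant in the $\mathbb{R}$-variable. I expect the main obstacle to be purely book-keeping: tracking the dimensional constants through the dyadic decomposition so that summation over scales produces a convergent geometric series, exactly as in the proof of the analogous bound in \cite{Der-Lamm}.
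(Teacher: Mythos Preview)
Your proposal is correct and matches the paper's approach: the paper does not give a self-contained proof but simply refers the reader to \cite{Der-Lamm} for the details, observing that the only new input needed on $N\times\mathbb{R}$ is the Gaussian bound of Theorem \ref{heatKernelThm} (obtained via the product splitting $K_L = K_L^N K_L^{\mathbb{R}}$), which is precisely the ingredient you single out. Your outline in fact fleshes out more of the Koch--Lamm/Deruelle--Lamm machinery than the paper itself does.
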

As a corollary of Theorem \ref{inhBound} and Lemma \ref{lemma31DerLamm}, we obtain the following.

\begin{thm}\label{stabilityTheorem}
Let $(N^n\times\mathbb{R},g_0(t))_{t\geq 0},$ where $g_0(t)=g_N(t)\otimes g_{\mathbb{R}}(t)$ and $n\geq 3,$ be an expanding gradient Ricci soliton such that $(N,g_N(t))$ is a Ricci soliton with positive curvature operator and quadratic curvature decay at infinity, i.e.,
\begin{align*}
    Rm(g_N)>0, \hspace{0.3cm} |Rm(g_N)|(x)\leq \frac{C}{1+d^2_{g_N}(p,x)}, \hspace{0.1cm} \forall x \in N,
\end{align*}
for some point $p\in N$ and a positive constant $C$ depending on $p.$ Then there exists $\varepsilon>0$ such that for any metric $g\in L^{\infty}(N\times\mathbb{R},g_0) $ satisfying $\|g-g_0\|_{L^{\infty}(N\times\mathbb{R},g_0)}\leq \varepsilon,$ there exists a global solution $(N\times\mathbb{R},g(t))_{t\geq 0}$ to the DeTurck Ricci flow with initial condition $g.$ Moreover,
\begin{enumerate}
    \item the solution is analytic in space and time and, for any $\alpha,\beta\geq 0$ the following holds:
    \begin{align*}
        \sup_{x\in N\times\mathbb{R},t>0}|(\sqrt{t}\nabla^{g_0(t)})^{\alpha}(t\partial_t)^{\beta}(g(t)-g_0(t))|_{g_0(t)}\leq c_{\alpha,\beta}|g(0)-g_0|_{L^{\infty}(N\times\mathbb{R},g_0)},
    \end{align*}
    \item the solution above is unique in $B_X(g_0,\varepsilon).$ 
\end{enumerate}
\end{thm}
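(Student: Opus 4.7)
The plan is to solve the modified Ricci-DeTurck flow \eqref{MRDF} via a Banach fixed-point argument in the space $X$, assembling the ingredients already proved in this section. Setting $h_0 := g - g_0 \in L^\infty(N\times\mathbb{R}, g_0)$ with $\|h_0\|_{L^\infty} \leq \varepsilon$, I rewrite the equation in Duhamel form
\begin{equation*}
    \bar h(t) = \mathcal{S}_t h_0 + K_L * \bigl(R_0[\bar h] + \nabla R_1[\bar h]\bigr),
\end{equation*}
where $\mathcal{S}_t h_0$ is the solution of the homogeneous linearized flow $(\partial_t - L_t)h = 0$ with initial data $h_0$, and $K_L * (\cdot)$ denotes convolution with the Lichnerowicz heat kernel as in Theorem \ref{inhBound}. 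Define the map $\Phi : B_X(0,\gamma) \to X$ by $\Phi(\bar h) := \mathcal{S}_t h_0 + K_L * (R_0[\bar h] + \nabla R_1[\bar h])$, so that a fixed point of $\Phi$ is exactly a solution in $X$.

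For self-mapping, Theorem \ref{homogEst} gives $\|\mathcal{S}_t h_0\|_X \leq c \|h_0\|_{L^\infty(N\times\mathbb{R},g_0)} \leq c\varepsilon$, while the combination of Theorem \ref{inhBound} and Lemma \ref{lemma31DerLamm} yields
\begin{equation*}
    \bigl\| K_L * (R_0[\bar h] + \nabla R_1[\bar h]) \bigr\|_X \leq c \bigl\| R_0[\bar h] + \nabla R_1[\bar h] \bigr\|_Y \leq c \|\bar h\|_X^2 \leq c\gamma^2.
\end{equation*}
For the contraction property, the second quadratic estimate of Lemma \ref{lemma31DerLamm} gives, for $\bar h, \bar h' \in B_X(0,\gamma)$,
\begin{equation*}
    \|\Phi(\bar h') - \Phi(\bar h)\|_X \leq c\bigl(\|\bar h\|_X + \|\bar h'\|_X\bigr) \|\bar h' - \bar h\|_X \leq 2c\gamma\, \|\bar h' - \bar h\|_X.
\end{equation*}
Choosing first $\gamma$ small so that $2c\gamma < 1$ and $c\gamma^2 < \gamma/2$, and then $\varepsilon$ small so that $c\varepsilon < \gamma/2$, the map $\Phi$ is a contraction of $B_X(0,\gamma)$ into itself. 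The Banach fixed-point theorem produces a unique $\bar h \in B_X(0,\gamma)$ solving \eqref{MRDF}, which, after undoing the rescaling $\bar h(t) = (1+t)(\tilde\phi_t)^* h(\ln(1+t))$, gives the desired global DeTurck Ricci flow $g(t)$ with $g(0) = g$, establishing existence together with uniqueness in $B_X(g_0, \varepsilon)$, i.e.\ part (2).

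For the higher-order estimates in part (1), I would bootstrap from the $X$-control using standard interior parabolic regularity for the DeTurck system, applied on the natural parabolic cylinders $C(x,l,R) \setminus C(x,l,R/\sqrt 2)$ at scale $R = \sqrt t$: each application of Schauder/$L^p$ estimates on such a cylinder upgrades the previous $\sqrt t\, \nabla$ bound by one derivative (or one time derivative via the equation), and the resulting bound is linear in the input, so the constant stays proportional to $\|h_0\|_{L^\infty}$. Iterating yields $\sup|(\sqrt t\, \nabla^{g_0(t)})^\alpha (t\partial_t)^\beta(g(t)-g_0(t))| \leq c_{\alpha,\beta}\|g(0)-g_0\|_{L^\infty}$, and analyticity in space-time follows from the real-analyticity of the Ricci-DeTurck nonlinearity together with the parabolic smoothing of $K_L$, exactly as in the Koch-Lamm framework.

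The main obstacle throughout this program was not in the fixed-point step itself, which is formal once the linear estimates and the quadratic bound on the nonlinearity are in place, but rather in establishing those ingredients on $N \times \mathbb{R}$, where the curvature operator is only non-negative and the quadratic curvature decay fails along the $\mathbb{R}$-factor. That difficulty has already been absorbed into Lemma \ref{lemma31DerLamm}, Theorem \ref{homogEst}, and Theorem \ref{heatKernelThm} via the splitting $K_L = K_L^N K_L^{\mathbb{R}}$ of the Lichnerowicz heat kernel and an extra integration in the $\mathbb{R}$-variable. Consequently, the present theorem reduces to the contraction argument above.
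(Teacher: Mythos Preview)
Your proposal is correct and follows exactly the route the paper intends: the paper states that Theorem \ref{stabilityTheorem} is a corollary of Theorem \ref{inhBound} and Lemma \ref{lemma31DerLamm} and refers to \cite{Der-Lamm} for the details, and what you have written is precisely the Banach fixed-point argument in $X$ from the Koch--Lamm/Deruelle--Lamm framework, using those two inputs (together with Theorem \ref{homogEst} for the homogeneous part, which is implicit in the $\|h_0\|_{L^\infty}$ term of Theorem \ref{inhBound}). Your closing remark that the real work has already been absorbed into the earlier lemmas is exactly the paper's point of view.
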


\begin{rem}
We observe that the result above can be directly extended to $N\times\mathbb{R}^k,$ for any $k \in \mathbb{Z}_{+},$ since the heat kernel in $\mathbb{R}^k$ has the same decay estimates.
\end{rem}

\section{Uniform estimates for the Ricci flow starting from the model class}\label{uniformEstimates}

We now consider a complete Ricci flow $(M^n,g(t))_{t\in [0,T]},$ with $n\geq 4$ and $(M,g(0))\in \mathcal{M}(\delta, s,\Lambda)$ for some $\Lambda\geq \Lambda_0.$ The constant $\Lambda$ will separate our manifold into a \textit{conical} region and an \textit{expanding} region. In the conical region, the metric is $\delta-$close to the product metric on $C(X)\times\mathbb{R}$, and in the expanding region it is $\delta-$close to the metric on the expander $\expander$ at scale $s.$

\subsection{Conical region estimates}
In this section, we argue that if the initial data is in $\mathcal{M}(\delta,\Lambda,s),$ pseudolocality controls the flow in the conical region. To do so, we work with the Ricci-DeTurck flow. Let $\hat{g}(t)$ be a solution for the Ricci-DeTurck flow 
\begin{align}\label{RDTF-changingbackground}
\dt \hat{g}=-2Ric(\hat{g})+\mathcal{L}_{\mathcal{W}(\hat{g},\tilde{g})}\hat{g},
\end{align}
where $\displaystyle{\mathcal{W}(\hat{g},\tilde{g})_k=\hat{g}_{kl}\hat{g}^{ij}(\hat{\Gamma}_{ij}^{l}-\tilde{\Gamma}_{ij}^{l})},$ and $\tilde{g}(t)$ is chosen as follows. For $(M,\hat{g}(0))\in \mathcal{M}(\delta,\Lambda,s),$ we define
\begin{align}\label{background_metrics}
    \tilde{g}(t)=\xi(r_s)(\Phi_{s}^{-1})^* g_{\exp}(t+s) + (1-\xi(r_s))\hat{g}(0),
\end{align}
where $\xi: [0,1]\mapsto [0,1]$ is a fixed smooth, non-increasing function which is identically 1 in $[0,\frac{1}{2}]$ and $\xi=0$ in $[\frac{5}{8},1].$ Given a Ricci flow $(M,g(t))_{t\in [0,T]}$ with $\mathcal{M}(\delta,\Lambda,s),$ consider $\displaystyle{\psi:\left\{r_s\leq \tfrac{3}{4}\right\}\times [0,T] \to \left\{r_s\leq \tfrac{3}{4}\right\}}$ solving the harmonic map heat flow 
\begin{align}\label{HMRF}
    & \dt \psi=\Delta_{g(t),\tilde{g}(t)}\psi,\\
    & \psi|_{t=0}=id_{\{r_s\leq \frac{3}{4}\}},\\
    & \psi_{\{r_s= \frac{3}{4}\}\times [0,T]}=id_{\{r_s= \frac{3}{4}\}}.
\end{align}

We assume that $T\leq 1$ and is small enough so that $g(t)$ and $\psi_t$ are smooth for every $t\in [0,T]$ and $\psi_t$ is a diffeomorphism. Then it is well known that $\displaystyle{\hat{g}(t)=(\psi_t^{-1})^*g(t)}$ solves \eqref{RDTF-changingbackground}. Assuming an \textit{a priori} bound on the gradient of $\psi,$ the lemma below (see \cite[Lemma 3.1]{conicalsing}) implies that $\hat{g}(t)$ is controlled until a definite positive time. The proof follows from Perelman's pseudolocality, using the fact that the metric is close to $G=g_c\otimes \gcan$ in the region defined below. We refer the reader to \cite{conicalsing} for the details.

\begin{lem}\label{conelemma}
Fix $B,\alpha>0.$ There exist $\delta_2(\alpha)>0,$ $\gamma_2(B,\alpha)>1$ and a constant $C(G)>0$ such that the following holds. Let $(M,g(t))_{t\in [0,T]}$ be a complete Ricci flow with bounded curvature and initial data satisfying $(M,g(0))\in \mathcal{M}(\delta_2,\Lambda,s),$ for some $\Lambda\geq \Lambda_0$ and $s\leq \frac{1}{32(\Lambda+1)^2}.$ Let $\tilde{g},$ $\psi$ and $\hat{g}(t)=(\phi_t^{-1})^*g(t)$ be as above and define the following conical region:

\begin{align}
    \mathcal{D}_{\gamma,\Lambda,s}^{cone}=\left\{ (p,t)\in \{r_s\leq \tfrac{3}{4}\}\times [0,(32\gamma)^{-1}], r_s(p)\geq \sqrt{\gamma t + s\Lambda^2}\right\}
\end{align}
for some $\gamma \geq \gamma_2$ and suppose $|\nabla\psi|_{g,\tilde{g}}\leq B$ in $\{ r_s \leq \tfrac{3}{4}\}\times [0,\min\{(32\gamma)^{-1},T\}].$ Then the following estimates are valid in $\mathcal{D}_{\gamma,\Lambda+1,s}^{cone} \cap (M\times [0,T]):$
\begin{align}\label{ConStab}
    |\hat{g}-\tilde{g}|_{\tilde{g}} +r_s|\tilde{\nabla}\hat{g}|_{\tilde{g}}<\alpha,
\end{align}
\begin{align}
    \sum_{j=0}^{2}r_s^{2+j}\left|(\nabla^g)^j Rm(g)\right|_g  \leq C.
\end{align}
\end{lem}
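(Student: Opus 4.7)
The plan is to follow the strategy of Gianniotis--Schulze, Lemma 3.1, adapting it to the product model $C(X) \times \mathbb{R}$. The three main steps I would carry out are: (i) apply Perelman's pseudolocality at each point of the conical region to obtain pointwise curvature bounds on $g(t)$; (ii) transfer these bounds to $\hat{g}(t)$ using the hypothesized gradient control on $\psi$; (iii) bound $|\hat{g} - \tilde{g}|_{\tilde{g}}$ via a maximum principle for the Ricci--DeTurck equation, and upgrade to the scaled gradient estimate by interior parabolic regularity.

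For (i), I would fix a point $p$ with $r_s(p) = R$ in the conical region and rescale by $R^{-2}$ to unit scale. By condition \eqref{closeToCone}, the rescaled initial metric is $\delta_2$-close in $C^4$ to the product cone metric $G = g_c \otimes \gcan$ on a unit ball. Since $(C(X), g_c)$ is a positively curved cone and the extra $\mathbb{R}$ factor contributes only a trivial flat direction, the almost-Euclidean property (in whichever formulation of pseudolocality one prefers) holds away from the spine $\{o\} \times \mathbb{R}$. Perelman's pseudolocality then applies provided $\delta_2$ is chosen small enough, yielding a bound $|Rm(g(t))|_g \leq C\, r_s^{-2}$ on a parabolic neighborhood of $p$ of size comparable to $r_s(p)^2$. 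Taking $\gamma_2$ sufficiently large, the defining condition $r_s(q) \geq \sqrt{\gamma t + s\Lambda^2}$ of $\mathcal{D}_{\gamma, \Lambda+1, s}^{cone}$ guarantees that every space-time point in the region is covered by such a neighborhood. Shi's local derivative estimates would then upgrade this to $r_s^{2+j}\,|(\nabla^g)^j Rm(g)|_g \leq C$ for $j = 0, 1, 2$.

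For (ii) and (iii), the gradient control $|\nabla \psi|_{g, \tilde{g}} \leq B$ keeps $\psi_t$ comparable to the identity for a definite time, so the curvature bounds on $g(t)$ transfer to $\hat{g}(t)$. The initial closeness $|\hat{g}(0) - \tilde{g}(0)|_{\tilde{g}} \leq c\, \delta_2$ follows immediately from the interpolation \eqref{background_metrics} together with the two closeness bounds \eqref{closeToCone} and \eqref{closeToExp} in the definition of $\mathcal{M}(\delta_2, \Lambda, s)$. The difference $v := \hat{g} - \tilde{g}$ satisfies a linear parabolic equation whose inhomogeneity is controlled by $|Rm(\tilde{g})|$ and $|\partial_t \tilde{g}|$, both of which decay like $r_s^{-2}$ by Lemma \ref{lemma21anal} (applied at scale $s$) and the explicit construction of $\tilde{g}$. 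A maximum principle with a suitable radial barrier would then yield $|v|_{\tilde{g}} < \alpha$ provided $\delta_2$ is chosen small in terms of $\alpha$, $B$, and $G$. The gradient bound $r_s\, |\tilde{\nabla} \hat{g}|_{\tilde{g}} < \alpha$ follows from standard interior Schauder-type estimates for the Ricci--DeTurck equation, applied on unit-scale parabolic balls after rescaling by $r_s^{-2}$.

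The main obstacle I anticipate is the clean verification of pseudolocality in the product-cone setting. One needs a uniform almost-Euclidean input at scale $r_s(p)$, uniformly along the spine $\{o\} \times \mathbb{R}$. Morally this is forced by $Rm(g) \geq 1$ on the link $X$ combined with the trivial product with $\mathbb{R}$, but tracking exactly how small $\delta_2$ must be chosen (as a function of $\alpha$ and $B$) so that the perturbation from $G$ to $g(0)$ still falls within the hypotheses of Perelman's theorem, and propagating this smallness through the Ricci--DeTurck maximum principle, is the bookkeeping-heavy part of the argument.
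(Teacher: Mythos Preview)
Your proposal is correct and follows essentially the same approach as the paper: the paper does not give a detailed proof but simply states that the result follows from Perelman's pseudolocality (using that the initial metric is $\delta_2$-close to the flat product $G = g_c \otimes \gcan$ in the conical region) and refers the reader to \cite{conicalsing}, Lemma 3.1, for the details. Your three-step outline (pseudolocality for the curvature bounds, transfer via the gradient control on $\psi$, and the Ricci--DeTurck maximum principle plus interior estimates for $|\hat g - \tilde g|_{\tilde g}$ and its gradient) is precisely the structure of that argument, and your observation that the extra $\mathbb{R}$ factor is flat and hence does not affect the almost-Euclidean input for pseudolocality is exactly the adaptation the paper has in mind.
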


\subsection{Local stability for the expanding soliton}

We now localise our stability result from section \ref{stabilitySection}. For $(\expander,g_0(t))_{t\geq 0},$ with $g_0(0)=g_0=g_N\otimes\gcan,$ we define the following regions. First, the interior region
\begin{align*}
    D_{\lambda_0} = \left\{ (x,l,t)\in \expander\times [0,T];\hspace{0.1cm} \mathbf{r}(x)\leq 2\sqrt{\gamma t +(\Lambda+1)^2},\hspace{0.1cm} l \in [-\lambda_0/2, \lambda_0/2]\right\}.
\end{align*}
We will also need the annular region
\begin{align*}
    A_{\lambda_0}= \big\{ (x,l,t)\in \expander\times[0,T]; \hspace{0.1cm} &\mathbf{r}(x)\in [\sqrt{\gamma t +(\Lambda+1)^2},2\sqrt{\gamma t +(\Lambda+1)^2}],\\
    & l\in [-\lambda_0/2, \lambda_0/2]\big\},
\end{align*}
and the boundary region
\begin{align*}
  B_{\lambda_0}=  \big\{ (x,l,t)\in \expander\times [0,T];\hspace{0.1cm} & \mathbf{r}(x)\leq 2\sqrt{\gamma t +(\Lambda+1)^2},\\
  &l\in [-\lambda_0/2,-\lambda_0/2 +1]\cup [\lambda_0/2 -1,\lambda_0/2]  \big\}. 
\end{align*}
Before our main lemma, we state the following technical lemma, which is just an adaptation of \cite[Lemma 4.3]{conicalsing} to our setting. 

\begin{lem}\label{lemma43}
Let $(N,g_N,f)$ be an asymptotically conical gradient Ricci expander with positive curvature operator and let $(g_0(t))_{t\geq 0}$ be the induced Ricci flow on $\expander$ with $g_0(0)=g_0.$ There is a $C(g_N)>0$ such that for every $\Lambda\geq \Lambda_0$ the following holds. Define $A_{\lambda_0}$ as above, for some $\gamma \geq 1.$ Then, if the tensors $h_1,$ $h_2$ are supported in $A_{\lambda_0}$ and satisfy 
\begin{align*}
    |h_1|_{g_0(t)}+|h_2|_{g_0(t)}^2 \leq \frac{D}{t+\gamma^{-1}(\Lambda+1)^2},
\end{align*}
we have
\begin{align*}
    \|h_1 + \nabla^{g_0(t)}h_2\|_{Y_T}\leq C(g_N)D.
\end{align*}

\end{lem}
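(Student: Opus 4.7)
The plan is to follow the strategy of \cite{conicalsing}, Lemma 4.3, adapted to the product setting $\expander$. Since $Y=Y_0+\nabla Y_1$, it suffices to bound $\|h_1\|_{Y_0}$ and $\|h_2\|_{Y_1}$ separately; in each case there are two pieces (the $L^1/L^{(n+4)/2}$ piece for $Y_0$ and the $L^2/L^{n+4}$ piece for $Y_1$), four numerical estimates in total. Fix a parabolic cylinder $C(x,l,R)\subset \expander\times(0,T]$. The key point is to translate the pointwise hypothesis on the support $A_{\lambda_0}$: setting $\rho(t):=\sqrt{\gamma t +(\Lambda+1)^2}$, the inclusion $\operatorname{supp}(h_j)\subset A_{\lambda_0}$ forces $\mathbf{r}(y)\in[\rho(t),2\rho(t)]$, and therefore
\[
|h_1|_{g_0(t)}+|h_2|_{g_0(t)}^2\leq \frac{D}{t+\gamma^{-1}(\Lambda+1)^2}\leq \frac{C(g_N)\,D\,\gamma}{\mathbf{r}(y)^2},
\]
so effectively $|h_j|^j$ decays like $\mathbf{r}^{-2}$ on its support.

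Next I would carry out the geometric reduction. The intersection $C(x,l,R)\cap A_{\lambda_0}$ factors as $(B_{g_N(t)}(x,R)\cap\{\mathbf{r}\in[\rho(t),2\rho(t)]\})\times(\,(l-R,l+R)\cap[-\lambda_0/2,\lambda_0/2]\,)$. The $l$-slice has length at most $\min(2R,\lambda_0)\le 2R$, while the full $l$-slice of $C(x,l,R)$ has length $2R$; thus the extra $\mathbb{R}$-factor contributes a harmless ratio $\le 1$ to every averaged integral and is the only new feature compared to the argument in \cite{conicalsing}. The $N$-slice is controlled by Bishop--Gromov applied to $g_N(t)\geq 0$ in Ricci curvature, together with the asymptotically conical structure of $(N,g_N)$, yielding
\[
\operatorname{Vol}_{g_N(t)}(B_{g_N(t)}(x,R)\cap\{\mathbf{r}\in[\rho(t),2\rho(t)]\})\leq C(g_N)\min(R^{n},\rho(t)^{n}).
\]

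With these ingredients, each of the four estimates reduces to an elementary integral. For the $L^1$ piece of $\|h_1\|_{Y_0}$, I compute $R^{2}\nor h_1\|_{L^{1}(C)}\leq R^{2}|C|^{-1}\int_{C\cap A_{\lambda_0}}|h_1|$, use $|h_1|\le C(g_N)D\gamma\,\mathbf{r}^{-2}$, and integrate, splitting into the two cases $R\leq\gamma^{-1/2}(\Lambda+1)$ and $R\geq\gamma^{-1/2}(\Lambda+1)$; in each case the $\gamma$-factors and the $\rho(t)^{-2}$ cancel against the measure of the support, and the ratio of $l$-lengths provides the remaining compatibility. The $L^{(n+4)/2}$ piece of $Y_0$ and both pieces of $Y_1$ are easier because on the shell $C(x,l,R)\setminus C(x,l,R/2)$ one has $t\geq R^{2}/4$, which forces $|h_1|\le 4D/R^{2}$ and $|h_2|\le 2\sqrt{D}/R$ pointwise; the weighted norms then reduce to $\operatorname{Vol}(A_{\lambda_0}\cap C)^{1/p}$ times an explicit $R$-power that precisely absorbs into the $\sqrt{t}$ weight.

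The main obstacle I anticipate is purely combinatorial: tracking cases for $R$ relative to $\rho(0)=(\Lambda+1)$, $\rho(R^{2})=\sqrt{\gamma R^{2}+(\Lambda+1)^{2}}$, and $\lambda_0$, and ensuring that the large powers of $\gamma$ entering from the pointwise bound are exactly compensated by the smallness of $|A_{\lambda_0}\cap C|/|C|$, so that the final constant depends only on $g_N$ (i.e.\ on $n$, $A_0=\sup|Rm(g_N)|$, $V_0=\operatorname{AVR}(g_N)$, and the asymptotic conical geometry). The product with $\mathbb{R}$ itself introduces no new difficulty; the bounded $l$-width $\lambda_0$ of $A_{\lambda_0}$ only improves matters through the $\min(2R,\lambda_0)/(2R)\le 1$ factor noted above.
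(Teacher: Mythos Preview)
The paper does not give a proof of this lemma; it states it as ``just an adaptation of \cite[Lemma 4.3]{conicalsing} to our setting'' and moves on. Your proposal carries out precisely that adaptation, and you correctly isolate the one new feature: the extra $\mathbb{R}$-factor in the parabolic cylinders $C(x,l,R)$ only shrinks the ratio $|A_{\lambda_0}\cap C|/|C|$ via the factor $\min(2R,\lambda_0)/(2R)\le 1$, so the estimates from \cite{conicalsing} go through unchanged. In that sense your approach and the paper's are identical.

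One technical caution on your sketch: the claim that ``on the shell $C(x,l,R)\setminus C(x,l,R/2)$ one has $t\ge R^2/4$'' is not literally true. By the definition of $C(x,l,R)$ in the paper, the complement $C(x,l,R)\setminus C(x,l,R/2)$ also contains points with $t\le R^2/4$ lying in the spatial annulus (either $d_{g_N(t)}(x,y)\in[R/2,R]$ or $|l'-l|\in[R/2,R]$). On that portion your pointwise bound $|h_1|\le 4D/R^2$ fails, and you must instead use the support constraint $\mathbf r(y)\in[\rho(t),2\rho(t)]$ together with the case split $R\lessgtr \gamma^{-1/2}(\Lambda+1)$ that you already set up for the $L^1$ piece. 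This is routine and is handled in \cite{conicalsing}; it just means the higher-integrability pieces are not quite as ``easy'' as you suggest. For the second $Y_1$ piece this issue does not even arise, since $\sqrt{t}\,|h_2|\le \sqrt{D}$ holds pointwise on all of $C(x,l,R)$ without any shell restriction.
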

The proof of the next lemma will follow \cite[Lemma 4.2]{conicalsing} and we refer the reader to their paper for the details of some steps. We will focus on the steps where our proof deviates from theirs.

\begin{lem}\label{localstab}
Let $\displaystyle{\left(\expander,g_0, f_0\right)}$ be a gradient Ricci expander, where $(N,g_N,f_N)$ is an asymptotically conical gradient Ricci expander with positive curvature operator, and let $g_0(t),$ $t\geq 0,$ be the induced Ricci flow with $g_0(0)=g_N\otimes\gcan.$ There exists $\delta_1(g_N)>0$ with the following property. Let $\Lambda\geq \Lambda_0,$ $\gamma \geq 1,$ $0<\delta_2<\delta_1$  and $\mathbf{r}(x)=2\sqrt{f_N(x)}$ for all $x\in N.$ Let $g(t),$ $t\in[0,T],$ be a solution to the Ricci-DeTurck flow
\begin{align*}
    \dt g(t)=-2Ric(g(t)) + \mathcal{L}_{W(g(t),g_0(t))}g(t)
\end{align*}
on $D_{\lambda_0},$ with $T\leq \min\{\tfrac{1}{32\gamma},\tfrac{1}{\lambda_0}\},$ and assume that 
\begin{align*} 
   0< H := \max \bigg\{ \sup_{D_{\lambda_0}\cap \{t=0\}}|g-g_0|_{g_0}, &\sup_{A_{\lambda_0}} ( |g-g_0|_{g_0} + \mathbf{r}|\nabla^{g_0}g|_{g_0})\bigg\} \leq \delta_1.
\end{align*}
We also assume that for some constant $0<M\leq 1,$
\begin{align*}
    \sup_{B_{\lambda_0}}(|g-g_0|_{g_0} + \sqrt{1+t}|\nabla^{g_0}g|_{g_0}) \leq M.
\end{align*}
Then, if $\lambda_0\geq \frac{M}{\delta_2}$ and $0<H\leq \delta_2\leq \delta_1,$ defining 

\begin{align*}
    {D}^{'}_{\lambda_0}=D_{\lambda_0}\cap \left( \left\{ \mathbf{r}(x)\leq \tfrac{3}{2}\sqrt{\gamma t +(\Lambda+1)^2}, -\lambda_0/4 \leq l \leq \lambda_0/4 \right\}\times [0,T] \right),
\end{align*}
we have
\begin{align*}
    \sup_{D^{'}_{\lambda_0}}\left| (\sqrt{t}\nabla^{g_0})^a(t\partial_t)^b(g-g_0)\right|_{g_0}\leq C_{a,b}(g_N),
\end{align*}
for any non-negative indices $a,b.$ Furthermore, for all $k = 0,1,2,...,$ there exist $C_k(g_N)$ and $0< \delta_k(g_N)\leq \delta_1$ such that if $H\leq \delta_k,$ then
\begin{align*}
    \sup_{D_{\lambda_0}'}\left| (\sqrt{t}\nabla^{g_0})^a(t\partial_t)^b(g-g_0)\right|_{g_0}\leq C_k H,
\end{align*}
provided $a+2b \leq 2k.$
\end{lem}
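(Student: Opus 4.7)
The plan is to globalise the problem: cutting off $h := g - g_0$ in both the radial $N$-direction and the longitudinal $\mathbb{R}$-direction produces a tensor $\bar h$ defined on all of $\expander$, and the resulting forced linear equation can be fed into the analytic machinery of Section~\ref{stabilitySection}. Relative to the isolated-singularity case in \cite{conicalsing}, the new feature is the longitudinal cutoff, which produces error terms supported in $B_{\lambda_0}$ where only the weak bound $M$ is available; these are tamed by the factorisation $K_L = K_L^N\,K^{\mathbb{R}}$ and the Gaussian decay in $\mathbb{R}$ from Theorem~\ref{heatKernelThm}.

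Concretely, pick $\chi_1:\mathbb{R}_+\to[0,1]$ with $\chi_1\equiv 1$ on $[0,7/4]$ and $\chi_1\equiv 0$ on $[2,\infty)$, and $\chi_2:\mathbb{R}\to[0,1]$ with $\chi_2\equiv 1$ on $[-3\lambda_0/8,3\lambda_0/8]$ and $\chi_2\equiv 0$ outside $(-\lambda_0/2,\lambda_0/2)$, with $|\chi_2'|\le C/\lambda_0$ and $|\chi_2''|\le C/\lambda_0^2$. Set $\chi(x,l,t):=\chi_1(\mathbf{r}(x)/\sqrt{\gamma t+(\Lambda+1)^2})\,\chi_2(l)$ and $\bar h:=\chi h$, extended by zero on $\expander\times[0,T]$. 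Writing the Ricci--DeTurck flow for $g = g_0 + h$ as in \eqref{MRDF}, the cutoff tensor satisfies globally
\begin{equation*}
(\dt - L_t)\bar h \;=\; \chi\bigl(R_0[h] + \nabla^{g_0(t)} R_1[h]\bigr) + E_A + E_B,
\end{equation*}
where $E_A, E_B$ are the commutator terms coming from $\chi_1$ and $\chi_2$ respectively, each of the schematic form $h*(\dt\chi_i-\Delta\chi_i)+\nabla h*\nabla\chi_i$.

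The error $E_A$ is supported in $A_{\lambda_0}$, where the hypotheses give $|h|+\mathbf{r}|\nabla^{g_0}h|\le H$; since $|\nabla\chi_1|\lesssim\mathbf{r}^{-1}$ and $|\dt\chi_1|+|\Delta\chi_1|\lesssim\mathbf{r}^{-2}$ on this annulus, Lemma~\ref{lemma43} applied to $E_A$ and to the quadratic forcing $\chi R_0[h]+\chi\nabla R_1[h]$ yields $Y$-estimates of order $H$. The error $E_B$ is supported in $\{|l|\in[3\lambda_0/8,\lambda_0/2]\}$, where only $|h|+\sqrt{t}|\nabla^{g_0}h|\le M$ holds; no naive $Y$-bound brings any smallness there. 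Instead, I would represent $\bar h$ by Duhamel,
\begin{equation*}
\bar h(y_0,t) = (K_L*\bar h(\cdot,0))(y_0,t) + \int_0^t\!\!\int_{\expander}\!K_L(y_0,t,y,s)\bigl[\chi(R_0+\nabla R_1)+E_A+E_B\bigr](y,s)\,d\mu_{g_0(s)}ds,
\end{equation*}
and estimate the $E_B$-contribution at any $y_0=(x_0,l_0)\in D'_{\lambda_0}$ using $|l_0-l|\ge\lambda_0/8$ on $\mathrm{supp}\,E_B$. The splitting $K_L = K_L^N K^{\mathbb{R}}$ combined with Theorem~\ref{heatKernelThm} produces a Gaussian factor $\exp(-\lambda_0^2/(CT))$, and integrating against $|E_B|\lesssim M/(\lambda_0\sqrt{t})$ gives a contribution bounded by $CM\lambda_0^{-1}\exp(-\lambda_0^2/(CT))\le C\delta_2$ under the assumption $\lambda_0\ge M/\delta_2$; the analogous $X$-norm bounds follow by differentiating $K_L$ and exploiting the same Gaussian decay.

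Feeding these estimates into the fixed-point/contraction argument of \cite{Der-Lamm} (via Theorems~\ref{homogEst} and \ref{inhBound} together with Lemma~\ref{lemma31DerLamm}) produces a unique solution to the global equation in $B_X(0,\delta_1)$; by uniqueness this solution agrees with $\chi h$, so $\sup_{D'_{\lambda_0}}|g-g_0|_{g_0}\le C(g_N)\delta_2\le C(g_N)$. The first conclusion then follows from standard Shi-type interior parabolic regularity for the Ricci--DeTurck flow. For the refined $C_kH$ estimate, once $H\le\delta_k$ is sufficiently small the quadratic terms $R_0[h], R_1[h]$ are of size $\|\bar h\|_X^2\lesssim H^2$, so Theorem~\ref{inhBound} upgrades the bound to $\|\bar h\|_X\le CH$; iterating this linear argument with successive differentiations of the Ricci--DeTurck equation yields the announced bounds up to order $2k$. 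The principal obstacle is the control of $E_B$: since $M$ is not a small parameter and $|\nabla\chi_2|$ only contributes a $\lambda_0^{-1}$ gain, any smallness must come from spatial separation in $l$, so the whole argument pivots on the Gaussian decay of $K^{\mathbb{R}}$ together with the quantitative scale condition $\lambda_0\ge M/\delta_2$.
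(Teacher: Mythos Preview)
Your overall strategy matches the paper's: cut off $h=g-g_0$ in both directions, write the evolution of the cut-off tensor as a forced linear equation, control the forcing in $Y$, and close via Theorem~\ref{inhBound} plus a continuity argument; the higher-order bounds then come from local parabolic regularity and scaling, just as you outline. The paper uses $\chi^2 h$ rather than $\chi h$ so that the nonlinear terms can be rewritten in terms of $\chi^2 h$ itself (plus further cutoff errors), which is what makes the closing inequality $\|\chi^2 h\|_{X_{T'}}\le C(\|\chi^2 h\|_{X_{T'}}^2+\delta_2)$ work; you should do the same.

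The substantive divergence is your handling of the longitudinal error $E_B$. You assert that ``no naive $Y$-bound brings any smallness there'' and that ``any smallness must come from spatial separation in $l$'', and then invoke the Gaussian factor $\exp(-\lambda_0^2/(CT))$ from Theorem~\ref{heatKernelThm}. This is both unnecessary and, as written, problematic. It is unnecessary because the pointwise bound you already have, $|E_B|\lesssim (M/\lambda_0^2)+(M/\lambda_0)\,t^{-1/2}\le \delta_2(1+t^{-1/2})$ on $B_{\lambda_0}\setminus A_{\lambda_0}$ (using only $\lambda_0\ge M/\delta_2$), feeds directly into a $Y$-norm computation of the same type as Lemma~\ref{lemma43} and gives $\|E_B\|_{Y_{T'}}\le C\delta_2$; this is exactly what the paper does. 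It is problematic because the lemma places no upper bound on $T$, so $\exp(-\lambda_0^2/(CT))$ need not be small; if you actually integrate $K_L$ against $E_B$ in time you pick up a factor comparable to $\sqrt{t}\exp(-\lambda_0^2/(Ct))$, whose supremum over $t>0$ is of order $\lambda_0$, and the resulting bound is $CM$ rather than $C\delta_2$. Your argument is rescued only because you also invoke $M/\lambda_0\le\delta_2$ at the very end --- but once you use that, the Gaussian detour is superfluous and the paper's direct $Y$-estimate is the clean way to proceed.
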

\begin{proof}
We start with an arbitrary $\lambda_0 > 2,$ to be chosen later in the proof. Let $\xi_1 : \mathbb{R} \to \mathbb{R}$ and $\xi_2: [0,\infty)\to \mathbb{R},$ $0\leq \xi_i \leq 1,$ for $i=1,2,$ be two smooth cut-off functions such that 
\begin{align*}
    & {\xi_1}{|_{\left[-\tfrac{\lambda_0}{2}+1,\tfrac{\lambda_0}{2}-1\right]}}\equiv 1, \hspace{0.5cm} {\xi_1}{|_{[\lambda_0/2,\infty)}}\equiv {\xi_1}{|_{(-\infty, -\lambda_0/2]}}\equiv 0,\\
    & {\xi_2}{|_{[0,1]}}\equiv 1, \hspace{0.5cm} {\xi_2}{|_{[2,\infty)}}\equiv 0, \\
    & |{\xi_i}'|+|{\xi_i}''| \leq C_{{\xi}}, 
\end{align*}
for some universal constant $C_{\xi}>0.$ Define the following cut-off function in $D_{\lambda_0}:$
\begin{align*}
    \chi(x,l,t)=\xi_1\left(l\right)\xi_2 \left(\frac{\mathbf{r}(x)}{\sqrt{\gamma t +(\Lambda+1)^2}}\right).
\end{align*}
 Then,
\begin{align*}
    \dt \chi = -\xi_1\left(l\right)\xi_2^{'} \left(\frac{\mathbf{r}(x)}{\sqrt{\gamma t +(\Lambda+1)^2}}\right)\frac{\mathbf{r}(x)\gamma}{2(\gamma t +(\Lambda+1)^2)^{\tfrac{3}{2}}},
\end{align*}
and so, $\displaystyle{|\dt \chi| \leq C_{\xi}\frac{1}{\gamma t + (\Lambda+1)^2}}$ on $A_{\lambda_0}$ and, outside $A_{\lambda_0},$ $\dt \chi=0.$ In particular, $\dt \chi=0$ on $B_{\lambda_0}\backslash A_{\lambda_0}.$ Similarly, 
\begin{align*}
    \nabla^{g_0(t)}\chi &= \nabla^{g_0(t)}\left( \xi_1\left(l\right)\xi_2 \left(\frac{\mathbf{r}(x)}{\sqrt{\gamma t +(\Lambda+1)^2}}\right)\right) \\
    &=\xi_1^{'}\left(l\right)\xi_2 \left(\frac{\mathbf{r}(x)}{\sqrt{\gamma t +(\Lambda+1)^2}}\right)e_{\mathbb{R}} \\
    &+ \xi_1\left(l\right)\xi_2^{'}  \left(\frac{\mathbf{r}(x)}{\sqrt{\gamma t +(\Lambda+1)^2}}\right)\frac{\nabla^{g_N(t)}\mathbf{r}(x)}{\sqrt{\gamma t +(\Lambda+1)^2}},
\end{align*}
with $\xi_2^{'}=0$ outside of $A_{\lambda_0}$ (in particular, in $B_{\lambda_0}\backslash A_{\lambda_0}).$ Lemma \ref{lemma21anal} implies that $\displaystyle{|\nabla^{g_0(t)}\mathbf{r}|^2\leq 2}$ in $A_{\lambda_0}.$ Thus,
\begin{align*}
    |\nabla^{g_0(t)}\chi|_{g_0(t)}^2 \leq C(n,\xi) \frac{1}{t +\gamma^{-1}(\Lambda+1)^2} +C(n,\xi)
\end{align*}
in $A_{\lambda_0},$ and 
\begin{align*}
    |\nabla^{g_0(t)}\chi|_{g_0(t)}^2 \leq C(n,\xi)
\end{align*}
in $B_{\lambda_0}\backslash A_{\lambda_0},$ with $supp(\nabla^{g_0(t)}\chi)=A_{\lambda_0}\cup B_{\lambda_0}.$

Finally, again using Lemma \ref{lemma21anal} to get the bound $\displaystyle{\mathbf{r}\Delta_{g_0}\mathbf{r}\leq 4(n-1)},$ we have 
\begin{align*}
    \Delta_{g_0(t)}\chi&= \xi_1\left(l\right)\bigg[\xi_2^{''}\left(\frac{\mathbf{r}(x)}{\sqrt{\gamma t +(\Lambda+1)^2}}\right)\frac{1}{\gamma t +(\Lambda+1)^2}|\nabla^{g_N(t)}\mathbf{r}|^2\\
   & + \xi_2^{'}\left(\frac{\mathbf{r}(x)}{\sqrt{\gamma t +(\Lambda+1)^2}}\right)\frac{1}{\sqrt{\gamma t +(\Lambda+1)^2}}\Delta_{g_N(t)}\mathbf{r}\bigg] \\
    &+ \xi_2\left(\frac{\mathbf{r}(x)}{\sqrt{\gamma t +(\Lambda+1)^2}}\right)\xi_1^{''}\left(l\right),
\end{align*}
hence, in $A_{\lambda_0}$ we have
\begin{align*}
    |\Delta_{g_0(t)}\chi|\leq C(n,\xi)\frac{1}{t +\gamma^{-1}(\Lambda+1)^2} +C(n,\xi)
\end{align*}
since $\xi_2^{'}=0$ in the set $\{ \mathbf{r}\leq \sqrt{\gamma t + (\Lambda+1)^2}\}.$ Finally,
\begin{align*}
    |\Delta_{g_0(t)}\chi|\leq C(n,\xi)
\end{align*}
in $B_{\lambda_0}\backslash A_{\lambda_0},$ with $supp(\Delta_{g_0(t)}\chi)=A_{\lambda_0}\cup B_{\lambda_0}.$ Using that $\mathbf{r}(x)\geq \sqrt{\gamma t +(\Lambda+1)^2}$ in $A_{\lambda_0},$ we can apply Lemma \ref{lemma21anal} to get
\begin{align}\label{R1bound}
    |\nabla^{g_0(t)}g(t)|_{g_0(t)}\leq \frac{H}{\sqrt{t + \gamma^{-1}(\Lambda+1)^2}}
\end{align}
in $A_{\lambda_0}.$

We denote $h(t)= g(t)-g_0(t)$ to write  the evolution equation of $h$ as
\begin{align*}
    (\partial_t - L_t)h= R_0[h] + \nabla R_1[h],
\end{align*}
where, as in Section \ref{stabilitySection},
\begin{align*}
    & L_t h_{ij}= \Delta_{g_0(t)}h_{ij}+ 2Rm(g_0(t))_{ijkl}h_{kl}-Ric(g_0(t))_{ik}h_{kj}-Ric(g_0(t))_{jk}h_{ki},\\
    & R_0[h]= Rm(g_0(t))*h*h + O(h^3)*Rm(g_0(t))+ g^{-1}*g^{-1}*\nabla^{g_0(t)}h * \nabla^{g_0(t)}h,\\
    & \nabla R_1[h]= \nabla_p^{g_0(t)}\left( \left( ( g_0(t)+h(t))^{pq}- (g_{0}(t))^{pq}\right) \nabla_q^{g_0(t)}h\right),
\end{align*}
with $R_1[h]=( ( g_0(t)+h(t))^{pq}- (g_{0}(t))^{pq})\nabla_q^{g_0(t)}h$ and $|O(h^3)|_{g_0(t)}\leq C |h(t)|^3_{g_0(t)}.$ 

Multiplying $h$ by $\chi^2,$ we can deduce an evolution equation for the product:
\begin{align}\label{hEvol}
    (\partial_t-L_t)(\chi^2h) &= \left( 2\chi\partial_t\chi - 2\chi\Delta_{g_0(t)}\chi - 2|\nabla^{g_0(t)}\chi|^2 \right) h + \chi^2R_0[h]\\
   \nonumber & + \nabla^{g_0(t)}(\chi^2R_1[h]) - 2\chi \nabla^{g_0(t)}\chi * \nabla^{g_0(t)}h - 2\chi\nabla^{g_0(t)}\chi*R_1[h].
\end{align}
Working with the same spaces and norms as Gianniotis--Schulze (see \cite[page 22]{conicalsing}), we define the following two terms:
\begin{align*}
    S_1[h]=\chi^2R_0[h]+ \nabla^{g_0(t)}(\chi^2R_1[h]),
\end{align*}
and
\begin{align*}
S_2[h]=\left( 2\chi\partial_t\chi - 2\chi\Delta_{g_0(t)}\chi - 2|\nabla^{g_0(t)}\chi|^2 \right)h & - 2\chi \nabla^{g_0(t)}\chi * \nabla^{g_0(t)}h \\
& - 2\chi\nabla^{g_0(t)}\chi*R_1[h].
\end{align*}
We will estimate $S_1$ and $S_2$ separately. It will also be helpful to split our analysis of the terms involving the cut-off function into $A_{\lambda_0}$ and $B_{\lambda_0}.$

We first estimate $S_2[h],$ noting that it is supported on $A_{\lambda_0}\cup B_{\lambda_0}.$ In $A_{\lambda_0},$ we can apply \eqref{R1bound} and the estimates above for the cut-off function to get
\begin{align*}
    |S_2[h]|_{g_0(t)}&\leq\frac{C H }{t+\gamma^{-1}(\Lambda+1)^2},
\end{align*}
since $|h|_{g_0(t)}\leq H$ in $A_{\lambda_0}.$ This is enough to apply Lemma \ref{lemma43} and bound the norm of $S_2$ restricted to $A_{\lambda_0}$ by $C\delta_2.$ Working on $B_{\lambda_0},$ we have
\begin{align*}
    |S_2[h]|_{g_0(t)} \leq & \left(2\chi|\dt\chi|+2\chi |\Delta_{g_0(t)}\chi|+2|\nabla^{g_0(t)}\chi|^2   \right)|h|_{g_0(t)} +2\chi|\nabla^{g_0(t)}\chi||\nabla^{g_0(t)}h| \\
    & + 2\chi|\nabla^{g_0(t)}\chi||R_1[h]|\\
    & \leq C|h| +C|\nabla^{g_0(t)} h| \leq C \left( M +\frac{M}{\sqrt{1+t}}\right).
\end{align*}

We turn our attention to $S_1[h]$ for a moment, starting with $\chi^2R_0[h].$ First,
\begin{align*}
    | & \chi^2(h*h +O(h^3)Rm(g_0(t))) |_{g_0(t)}  \leq C \chi^2 |h|^2_{g_0(t)}|Rm(g_0(t))|_{g_0(t)} \\
    & \leq C|\chi^2 h|^2_{g_0(t)}|Rm(g_0(t))|_{g_0(t)} + C\chi^2(1-\chi^2)|h|^2_{g_0(t)}|Rm(g_0(t))|_{g_0(t)}.
\end{align*}
The first term on the RHS will be controlled by Theorem \ref{inhBound}. For the second term, observe that $\chi^2(1-\chi^2)$ is only non-zero in $A_{\lambda_0}\cup B_{\lambda_0}.$ Working on $A_{\lambda_0}$ first, we have
\begin{align*}
     C\chi^2(1-\chi^2)|h|^2_{g_0(t)}|Rm(g_0(t))|_{g_0(t)}\leq  \frac{C(g_0)\chi^2(1-\chi^2)H}{t+\gamma^{-1}(\Lambda+1)^2},
\end{align*}
since $|h|_{g_0(t)}\leq H$ in $A_{\lambda_0}$ and
\begin{align*}
    |Rm(g_0(t))|_{g_0(t)}=|Rm(g_N(t))|_{g_N(t)}\leq \frac{C(g_N)}{\mathbf{r}^2} \leq \frac{C(g_N)}{\gamma t + (\Lambda+1)^2}
\end{align*}
in $A_{\lambda_0}$ due to Lemma \ref{lemma21anal}. On $B_{\lambda_0},$ we observe that the norm of the curvature tensor is still bounded, now by $\displaystyle{\tfrac{C}{1+t}}$ since $(\expander, g_0(t))$ is a type III solution. Therefore, 
\begin{align}\label{badterm1}
    C\chi^2(1-\chi^2)|h|^2_{g_0(t)}|Rm(g_0(t))|_{g_0(t)}\leq \frac{C(g_0)\chi^2(1-\chi^2)M}{1+t},
\end{align}
since $|h|_{g_0(t)}\leq M$ in $B_{\lambda_0}.$

For $\chi^2g^{-1}*g^{-1}*\nabla^{g_0(t)}h * \nabla^{g_0(t)}h$ we have
\begin{align*}
    \chi^2g^{-1}*g^{-1}*\nabla^{g_0(t)}h*\nabla^{g_0(t)}h&=  \chi^2(1-\chi^2)g^{-1}*g^{-1}*\nabla^{g_0(t)}h * \nabla^{g_0(t)}h\\
    &+ g^{-1}*g^{-1}*\nabla^{g_0(t)}(\chi^2h)*\nabla^{g_0(t)}(\chi^2h)\\
    &+ g^{-1}*g^{-1}*\chi^2*\nabla^{g_0(t)}\chi*\nabla^{g_0(t)}\chi*h*h\\
    & + g^{-1}*g^{-1}*\chi^3*\nabla^{g_0(t)}\chi*\nabla^{g_0(t)}h*h.
\end{align*}
Thus,
\begin{align*}
    |\chi^2g^{-1}*g^{-1}* & \nabla^{g_0(t)}h*\nabla^{g_0(t)}h|_{g_0(t)} \leq C |\nabla^{g_0(t)}(\chi^2 h)|^2_{g_0(t)}\\
    & + \chi^2(1-\chi^2)|\nabla^{g_0(t)}h|^2_{g_0(t)} +C\chi^2|\nabla^{g_0(t)}\chi|^2_{g_0(t)}|h|^2_{g_0(t)}\\
    &+C\chi^3|\nabla^{g_0(t)}\chi|_{g_0(t)}|\nabla^{g_0(t)}h|_{g_0(t)}|h|_{g_0(t)}.
\end{align*}
Again, the first term is controlled by Theorem \ref{inhBound}. Working on $A_{\lambda_0},$ the three last terms are bounded by $\displaystyle{\frac{CH}{t+\gamma^{-1}(\Lambda+1)^2}}.$ On $B_{\lambda_0},$ we bound the same terms by
\begin{align}\label{badterm2}
\nonumber  \chi^2(1-\chi^2)|\nabla^{g_0(t)}h|^2_{g_0(t)} &+C\chi^2|\nabla^{g_0(t)}\chi|^2_{g_0(t)}|h|^2_{g_0(t)} \\
  &+C\chi^3|\nabla^{g_0(t)}\chi|_{g_0(t)}|\nabla^{g_0(t)}h|_{g_0(t)}|h|_{g_0(t)}\leq
\frac{CM^2}{1+t}+ CM^2 + \frac{CM^2}{\sqrt{1+t}}.
\end{align}

For the last term, $\displaystyle{\chi^2R_1[h]},$ we consider
\begin{align*}
    \chi^2R_1[h]=\chi^2\left[\left( (g_0(t)+h(t))^{pq}- (g_0(t))^{pq}\right)\nabla_q^{g_0(t)}h \right].
\end{align*} 
Thus,
\begin{align*}
    |\chi^2 R_1[h]|_{g_0(t)} & \leq  C|\chi^2h|_{g_0(t)}|\nabla^{g_0(t)}(\chi^2h)|_{g_0(t)}+C(1-\chi^2)\chi^2|h|_{g_0(t)}|\nabla^{g_0(t)}h|_{g_0(t)}\\
    & + C|h|^2_{g_0(t)}\chi^3|\nabla^{g_0(t)}\chi|_{g_0(t)}\\
    & \leq C|\chi^2h|_{g_0(t)}|\nabla^{g_0(t)}(\chi^2h)|_{g_0(t)}+C(1-\chi^2)\chi^2|h|_{g_0(t)}|\nabla^{g_0(t)}h|_{g_0(t)}\\
    & + C|h|^2_{g_0(t)}|\nabla^{g_0(t)}\chi|_{g_0(t)}.
\end{align*}
As above, the first term is controlled by Theorem \ref{inhBound}. The two last terms are supported on $A_{\lambda_0}\cup B_{\lambda_0},$ so we can directly bound them by $\frac{C H }{\sqrt{t+\gamma^{-1}(\Lambda+1)^2}}$ on $A_{\lambda_0},$ and work as before to bound them on $B_{\lambda_0}:$
\begin{align}\label{badterm3}
    C(1-\chi^2)\chi^2|h|_{g_0(t)}|\nabla^{g_0(t)}h|_{g_0(t)} +C|h|^2_{g_0(t)}|\nabla^{g_0(t)}\chi|_{g_0(t)} \leq \frac{CM^2}{\sqrt{1+t}} + CM^2.
\end{align}

Observe, then, that applying Theorem \ref{inhBound} and Lemma \ref{lemma43} allows us to bound the $Y$-norm of every term by $C\delta_2,$ except for the terms in (\ref{badterm1}-\ref{badterm2}-\ref{badterm3}) restricted to $B_{\lambda_0}.$ If we group these terms on a new tensor that we call $S_0,$ supported in $B_{\lambda_0},$ we get
\begin{align*}
    |S_0|_{g_0(t)}\leq CM +\frac{CM}{1+t} +\frac{CM}{\sqrt{1+t}},
\end{align*}
since $M\leq 1,$ where $C=C(n,g_0).$ We need weighted $L^1$ and $L^{\tfrac{n+4}{2}}$ bounds for $S_0.$ Since we only need to restrict ourselves to $B_{\lambda_0},$ we can directly estimate the weighted $L^1$ norm of $S_0$ on a parabolic ball $P((x,l),R)$ for $(x,l,t)\in B_{\lambda_0}.$ We have
\begin{align*}
   \frac{1}{R^{n+1}} \norm{S_0}_{L^1(P((x,l),R))}\leq & \frac{CM}{R^{n+1}}\int_{0}^{R^2}\int_{P((x,l),R)\cap(B_{\lambda_0}\cap \expander\times \{t\})}\left(1+ \frac{1}{1+t}+\frac{1}{\sqrt{1+t}}\right)d\mu_{g_0(t)}dt \\
   & \leq \frac{CM}{R^{n+1}}\int_{0}^{R^2}\int_{-\tfrac{\lambda_0}{2}}^{-\tfrac{\lambda_0}{2} +1}\left(1+ \frac{1}{1+t}+\frac{1}{\sqrt{1+t}}\right) C(g_N)R^n dldt\\
  & +\frac{CM}{R^{n+1}}\int_{0}^{R^2}\int_{\tfrac{\lambda_0}{2}-1}^{\tfrac{\lambda_0}{2}}\left(1+ \frac{1}{1+t}+\frac{1}{\sqrt{1+t}}\right) C(g_N)R^n dldt\\
  & \leq \frac{CM}{R}\int_{0}^{R^2}Cdt = CMR  \leq C\delta_2,
\end{align*}
where we used Bishop-Gromov to control the volume of the ball in $(N,g_N)$ from above, and our assumption on $\displaystyle{R \leq T' < \min\{T,\tfrac{1}{32\gamma},\tfrac{1}{\lambda_0}\}}$ and $\displaystyle{\tfrac{M}{\lambda_0}\leq \delta_2.}$   The $L^{\tfrac{n+4}{2}}$ can be estimated directly in the same way. 

Finally, applying Lemma \ref{lemma43}, Lemma \ref{lemma31DerLamm} and the estimate above yields
\begin{align*}
    \|S_1[h]\|_{Y_{T'}} +\|S_2[h]\|_{Y_{T'}} \leq C\left(\|\chi^2h\|^2_{X_{T'}} + \delta_2 \right).
\end{align*}
Together with Theorem \ref{inhBound}, this yields:
\begin{align*}
    \|\chi^2 h\|_{T'}\leq C(\|\chi^2 h\|_{T'}^{2}+ \delta_2).
\end{align*}
So for every $T'\leq T$ such that $\|\chi^2 h\|_{T'}\leq \frac{1}{2C},$ we have
\begin{align*}
    \|\chi^2 h\|_{T'}\leq C\delta_2.
\end{align*}
Therefore, since
\begin{align*}
    \lim_{T'\to 0}(\|\chi^2 h\|_{T'} - \sup_{\expander\times[0,T']}
|\chi^2 h|_{g_0})=0 \quad \text{and} \quad \lim_{T'\to 0}\sup_{\expander\times[0,T']}|h|_{g_0}\leq H \leq \delta_2,
\end{align*}
if $\max\{\delta_2,C\delta_2\}\leq \frac{1}{2C},$ we have $\displaystyle{ \|\chi^2 h\|_{T}\leq C\delta_2.}$

In order to obtain the decay estimates and finish our proof, we utilise a local argument and scaling. The idea is to first work on a small enough region, where the decay estimates will follow directly from parabolicity, making sure this small enough region is contained in $D_{\lambda_0}.$ We then use the rescaling nature of the Ricci flow to make sure that the estimates in fact work for every point in $D'_{\lambda_0}.$ The first step is proved in the following two claims.

\begin{claim}
There exist $0< r_0 < 1,$ $\varepsilon_0>0,$ and constants $C_{a,b}>0$ such that the following holds. Let $x_0\in N,$ $t_0\in (0,1],$ $0< r < \min(\sqrt{t_0},r_0),$ and consider $g(t)$ a solution to the Ricci-DeTurck flow with background metric $g_0(t)$ on 
\begin{align*}
    C((x_0,0),t_0,r):= \displaystyle{\bigcup_{t\in (t_0-r^2,t_0)}} \left( B_{g_N(t)}(x_0,r)\times(-r_0,r_0)\right)\times \{t\},
\end{align*}
with $|g(t)-g_0(t)|_{g_0(t)}\leq \varepsilon_0.$ Then
\begin{align*}
    |(r\nabla^{g_0})^{a} (r^2\partial_t)^b(g-g_0)|_{g_0}((x_0,0),t_0)\leq C_{a,b}.
\end{align*}
Furthermore, for every $k\in \mathbb{N}$ there exists $0<\varepsilon_k\leq \varepsilon_0$ such that if $|g(t)-g_0(t)|_{g_0(t)}\leq \varepsilon_k$ on $C(x_0,t_0,r),$ then there exists a constant $C_{a,b}^{'} >0$ such that
\begin{align*}
    |(r\nabla^{g_0})^a(r^2\partial_t)^b(g-g_0)|_{g_0}(x_0,t_0)\leq C_{a,b}^{'} \sup_{C((x_0,0),t_0,r)}|g(t)-g_0(t)|_{g_0(t)},
\end{align*}
as long as $a+2b\leq k.$
\end{claim}

For $r_0$ sufficiently small, we can find a coordinate system such that $g_0(t)$ is well controlled in $B_{g_N(0)}(x_0,r)\times(-r_0,r_0)$ for $t\in [0,1].$ The estimate then follows directly from local parabolic estimates. For details, see \cite[Proposition 2.5]{hypstab}.

\begin{claim}
There exists $0<\delta <1,$ independent of $\gamma$ and $\Lambda,$ such that for any $(x,l,t)\in D_{\lambda_0}',$ we have
\begin{align*}
    C((x,l),t,\sqrt{\delta t})\subset D_{\lambda_0}.
\end{align*}
\end{claim}
Since the metric is not changing in the $\mathbb{R}-$direction, we first choose $0<r_0 < \lambda_0/2.$ This guarantees the inclusion in the $\mathbb{R}-$direction. For the details of the inclusion for the ball in $N,$ see \cite[Lemma 4.2]{conicalsing}.

We now move on to the decay estimates for $D^{'}_{\lambda_0}.$ If $0<t<1,$ this follows directly from the two claims above. Therefore, fix $(x_0,l_0,t_0) \in D^{'}_{\lambda_0},$ with $1\leq t_0 \leq T.$ If we define $\eta:= \tfrac{2}{t_0}$ and the rescaling given by $\displaystyle{g^{\eta}(t):=\eta \phi_{\eta}^*g(\tfrac{t}{\eta})},$ then $g_0(t)=g^{\eta}_0(t)$ and $g^{\eta}$ solves the Ricci-DeTurck flow on 
\begin{align*}
    D^{\eta}_{\lambda_0}= \bigg\{ (x,l,t) \in \expander\times[0,1]; \hspace{0.1cm}  x\in \phi_{N}^{\eta^{-1}}\big(\{ &\mathbf{r}(x) \leq 2\sqrt{\tfrac{\gamma}{\eta}t + (\Lambda+1)^2}\}\big),\\
    &l \in [-\lambda_0/2,\lambda_0/2] \bigg\},
\end{align*}
where the $\mathbb{R}-$direction is not changing since rescaling by $\displaystyle{\frac{1}{\eta}}$ cancels out with $\phi_{\mathbb{R}}^{\eta^{-1}}.$ Therefore, the second claim holds for the new $\delta$ given by $\delta\eta.$ We can then apply the first claim to obtain the same estimates as before. Rescaling back to the original solution, we get the decay estimates from the statement.
\end{proof}

\subsection{Estimates on the expanding region}\label{expandingSubsection}

Finally, we prove estimates for the expanding region of our solution to the Ricci flow starting from $\mathcal{M}(\delta,\Lambda, s).$ Assuming the bounds on the conical region and an \textit{a priori} bound on the horizontal ends of a sub-region of the expanding region, we control the evolution of our solution in the interior of the sub-region. Overlapping such sub-regions, we obtain the needed estimates. From now on, we fix $p_0=\Phi_s(q_{\max},0)\in \Gamma_s$ and $\lambda_0 >0.$

\begin{lem}\label{expEstimate}
For every $\varepsilon >0$ and every $k\in \mathbb{N},$ there exists $\delta_0=\delta_0(g_N,\varepsilon,k)>0$ such that if $(M,g(t))_{t\in [0,T]}$ is a complete Ricci flow with bounded curvature, and initial data satisfying $(M,g(0))\in \mathcal{M}(\delta,\Lambda, s)$ for $\delta\leq \delta_0,$ $\Lambda>0$ and $s\leq \frac{1}{32(\Lambda+1)^2},$ then the following holds. Let $\hat{g}(t)=(\psi_t^{-1})^*g(t)$ be the Ricci-DeTurck flow in $\{ r_s\leq 3/4\}$ and let $\Tilde{g}(t)$ be as in \eqref{background_metrics}. Define the expanding region as:
\begin{align*}
    \mathcal{D}_{\gamma,\Lambda,s}^{exp}= \left\{ (x,t)\in M\times [0,\frac{1}{32\gamma}]\hspace{0.2cm} ; \hspace{0.2cm} r_s(x)\leq \tfrac{3}{2}\sqrt{\gamma t +s(\Lambda+1)^2}   \right\},
\end{align*}
and assume that estimate \eqref{ConStab} holds in $\mathcal{D}_{\gamma,\Lambda+1,s}^{cone} \cap (M\times [0,T])$ for some $\gamma\geq 1.$ Then we have
\begin{align}
    (t+s)^{j/2}|(\nabla^{\tilde{g}})^j(\hat{g}-\tilde{g})|_{\tilde{g}} < \varepsilon
\end{align}
in $\mathcal{D}_{\gamma,\Lambda,s}^{exp} \cap (M\times [0,T]),$ for every $0\leq j\leq k.$  
\end{lem}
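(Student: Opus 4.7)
The plan is to pull back the Ricci--DeTurck flow $\hat g(t)$ by $\Phi_s$ to the product expander $(\expander, g_0(\tau))$ and parabolically rescale by $1/s$, so that the rescaled time interval becomes $[0, (32\gamma s)^{-1}]$ and the initial datum lives at $\tau = 1$. In the subregion where $\xi_1(r_s) = 1$, the background metric $\tilde g(t)$ coincides with $(\Phi_s^{-1})^* g_0(t+s)$, so $\hat g - \tilde g$ matches the perturbation $g - g_0$ analysed by Lemma \ref{localstab}. The goal is then to apply Lemma \ref{localstab} on neighbourhoods centred at points $p_i = \Phi_s(q_{\max}, l_i)$ along $\Gamma_s$, and glue the resulting estimates.

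Two of the three hypotheses of Lemma \ref{localstab} are immediate. The initial data closeness on $D_{\lambda_0}$ follows from \eqref{closeToExp} in the definition of $\mathcal{M}(\delta, \Lambda, s)$, taking $\delta$ small. The annular bound on $A_{\lambda_0}$, which in the rescaled coordinates is $\{\mathbf{r}(x) \in [\sqrt{\gamma t + (\Lambda+1)^2}, 2\sqrt{\gamma t + (\Lambda+1)^2}]\}$, sits inside the conical region $\mathcal{D}^{cone}_{\gamma, \Lambda+1, s}$ and is therefore supplied by estimate \eqref{ConStab} of Lemma \ref{conelemma}, with $\alpha$ chosen small. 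Together these make $H \leq C \max(\delta, \alpha)$ arbitrarily small by shrinking $\delta_0$. The only non-trivial hypothesis is the horizontal-end bound $M$ on $B_{\lambda_0}$, for which no direct control is available.

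To produce $M$ I would run a continuity--overlap argument along the closed curve $\Gamma_s$. Cover $\Gamma_s$ by a finite family $\{U_i\}$ of neighbourhoods of $l$-extent $\lambda_0$, centred at successive points $p_i$, shifted so that each horizontal end $B_{\lambda_0}^{(i)}$ is contained in the interior $D'_{\lambda_0}(U_{i-1}) \cap D'_{\lambda_0}(U_{i+1})$ of its neighbours. Let $T^{\ast}$ be the supremum of times for which the coarse bound $|\hat g - \tilde g|_{\tilde g} + \sqrt{t}\,|\nabla^{\tilde g} \hat g|_{\tilde g} \leq 1$ holds uniformly on every $B_{\lambda_0}^{(i)}$; continuity and the initial data estimate give $T^{\ast} > 0$. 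For $t \leq T^{\ast}$, Lemma \ref{localstab} applies simultaneously on each $U_i$ (the constraint $\lambda_0 \geq M/\delta_2 = 1/\delta_2$ is arranged a priori), producing bounds of size $C_k H$ on each interior region $D'_{\lambda_0}(U_i)$. By the overlap these interiors cover every $B_{\lambda_0}^{(i)}$, so if $\delta_0$ is chosen small enough that $C_k H < 1/2$, the bootstrap assumption is strictly improved and the standard open--closed argument forces $T^{\ast} = T$.

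The higher-order statement $(t+s)^{j/2} |(\nabla^{\tilde g})^j (\hat g - \tilde g)|_{\tilde g} < \varepsilon$ for $0 \leq j \leq k$ then follows by invoking the index-$k$ conclusion of Lemma \ref{localstab}, choosing $\delta_0$ small enough that $C_k H < \varepsilon$, and undoing the parabolic rescaling by $1/s$. The main obstacle I foresee is parameter bookkeeping: the constraint $\lambda_0 \geq 1/\delta_2$ may force $\lambda_0$ to be larger than is convenient compared to the length $L = \mathbf{l}(\Gamma_s)$, in which case one should iterate the argument --- first with a crude $\lambda_0$ and coarse $M$ to obtain a preliminary improvement, then with a finer cover using the improved interior bound in place of $M$ --- until the horizontal-end dependence is fully absorbed into the conical estimates, matching the paper's assertion that the control can be ``improved indefinitely'' by overlapping such regions.
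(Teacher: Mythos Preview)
Your proposal is correct and follows essentially the same approach as the paper: pull back and parabolically rescale to $(\expander,g_0)$, run a continuity argument via a maximal time $\tau_0$ for which the coarse bound $|\hat g-\tilde g|_{\tilde g}+\sqrt{t}\,|\nabla^{\tilde g}\hat g|_{\tilde g}\leq 1$ holds, apply Lemma~\ref{localstab} (with $M=1$) on overlapping neighbourhoods along $\Gamma_s$, and use the inclusions $\mathcal D_k^{out}\subset \mathcal D_{k-1}^{in}\cup\mathcal D_{k+1}^{in}$ to upgrade the horizontal-end bound to $\varepsilon<1$, contradicting maximality of $\tau_0$. The only minor divergence is your handling of the constraint $\lambda_0\geq 1/\delta_2$ versus the curve length: you suggest iterating with successively improved $M$, whereas the paper disposes of this in one step by rescaling the metric so that $\eta\geq\lambda_0$.
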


\begin{proof}

Suppose that $(M,g(0))\in \mathcal{M}(\delta,\Lambda,s)$ for $\delta \leq \min\{\delta_1,\delta_2\},$ where $\delta_1,\delta_2 >0$ are as in the lemmata above, $\Lambda >0$ and $0< s< \frac{1}{32(\Lambda+1)^2}.$ Let us start by defining the horizontal ends. We first choose $0< \eta \leq \eta_0,$ with $\eta_0$ as in the definition of the model class. If necessary, we rescale the metric so that $\eta \geq \lambda_0, $ where $\lambda_0\geq 2$ is as in Lemma \ref{localstab}. Let $\mathcal{D}_{\gamma,\Lambda,s,\eta}^{out}$ be given by
\begin{align*}
    \mathcal{D}_{\gamma,\Lambda,s,\eta}^{out}= \bigg\{ (x,t)\in M\times [0,\tfrac{1}{32\gamma}]\hspace{0.2cm} ; \hspace{0.2cm} & r_s(x)\leq \tfrac{3}{2}\sqrt{\gamma t +s(\Lambda+1)^2},\\
    &-\frac{\eta}{2}\leq l_s(x)\leq -\frac{\eta}{2}+1 \text{  or  } \frac{\eta}{2}-1 \leq l_s(x)\leq \frac{\eta}{2} \bigg\}.
\end{align*}
Since $\hat{g}$ and $\tilde{g}$ are smooth Riemannian metrics on a compact region, there exists a constant $C<\infty,$ which might initially depend on $s>0,$ such that
\begin{align}\label{paraboundary}
    |\hat{g}-\tilde{g}|_{\tilde{g}} +\sqrt{t+s}|\tilde{\nabla}\hat{g}|_{\tilde{g}}< C
\end{align} 
in $\displaystyle{ \mathcal{D}_{\gamma,\Lambda,s}^{exp}\cap (M\times[0,T])}.$ We define 
\begin{align*}
    \tau_0 = \max\{ \tau \hspace{0.1cm} |\hspace{0.1cm} |\hat{g}-\tilde{g}|_{\tilde{g}} +\sqrt{t+s}|\tilde{\nabla}\hat{g}|_{\tilde{g}} \leq 1 \hspace{0.2cm} \text{in} \hspace{0.2cm}\mathcal{D}_{\gamma,\Lambda,s}^{exp}\cap (M\times[0,\tau]) \}.
\end{align*}
Since $(M,g(0))\in \mathcal{M}(\delta,\Lambda,s)$ we have that $\tau_0>0$ by continuity. Assume $\displaystyle{\tau_0 < \min\{ \tfrac{1}{32\gamma},\tfrac{1}{\lambda_0}\}}.$ We then first show that for every $0\leq j \leq k:$
\begin{align}\label{expandingcontrol}
    (t+s)^{j/2}|(\nabla^{\tilde{g}})^j(\hat{g}-\tilde{g})|_{\tilde{g}} < \varepsilon
\end{align}
in $\mathcal{D}_{\gamma,\Lambda,s,\eta}^{in} \cap (M\times [0,\tau_0]),$ where
\begin{align*}
    \mathcal{D}_{\gamma,\Lambda,s,\eta}^{in}= \bigg\{ (x,t)\in M\times [0,\tfrac{1}{32\gamma}]\hspace{0.2cm} ; \hspace{0.2cm} & r_s(x)\leq \tfrac{3}{2}\sqrt{\gamma t +s(\Lambda+1)^2}, \\
    &-\tfrac{\eta}{2}+1\leq l_s(x)\leq \tfrac{\eta}{2}-1     \bigg\}.
\end{align*}
Note that $\mathcal{D}_{\gamma,\Lambda,s,\eta}^{in}\cup \mathcal{D}_{\gamma,\Lambda,s,\eta}^{out}= \mathcal{D}_{\gamma,\Lambda,s,\eta}^{exp}$
is the expanding region restricted to $-\frac{\eta}{2}\leq l_s(x)\leq \frac{\eta}{2};$ essentially, a neighbourhood of a point in $\Gamma_s= \Phi_s(q_{\max},[-L/2,L/2]).$

Consider $Q=\Phi_s \circ (\tilde{\phi}_s)^{-1},$ where $\tilde{\phi_s}: \expander \rightarrow \expander$ is given by $\tilde{\phi}_s(x,l)=(\phi_N^s(x),\phi_{\mathbb{R}}^s(l)).$ Then
\begin{align*}
    Q^*r_s= 2\sqrt{sf}.
\end{align*}
We define $h(t)=s^{-1}Q^*\hat{g}(st)$ and observe that $\displaystyle{s^{-1}Q^*\tilde{g}(st) = g_0(t)}$ in $\displaystyle{\left\{ \mathbf{r}\leq \frac{1}{2\sqrt{s}}\right\}}.$ Since we are assuming that \eqref{ConStab} is true on $\mathcal{D}^{cone}_{\gamma,\Lambda+1,s}$ and we have that the following equality holds $\displaystyle{\left\{\mathbf{r}\geq \sqrt{\gamma t +(\Lambda+1)^2}\right\}=Q^{-1}\left( \{ \sqrt{\gamma s t +s (\Lambda+1)^2}\leq r_s \leq 3/4 \} \right)},$ we get
\begin{align*}
    |h(t)-g_0(t)|_{g_0(t)} + & \mathbf{r}|\nabla^{g_0(t)}h(t)|_{g_0(t)} = |s^{-1}Q^*\hat{g}(st) - s^{-1}Q^*\tilde{g}(st)|_{g_0(t)}  \\
    & + \mathbf{r}|\nabla^{g_0(t)}h(t)|_{g_0(t)}  \leq Q^*\left(|\hat{g}-\tilde{g}|_{\tilde{g}}  +r_s|\nabla^{\tilde{g}}\hat{g}|_{\tilde{g}}\right)(st)\\
    &< \delta,
\end{align*}
in $\displaystyle{\left\{\mathbf{r}\geq \sqrt{\gamma t +(\Lambda+1)^2}\right\}}$ for any $t\in [0,\tfrac{1}{s}\min \{\tfrac{1}{32\gamma},T\}].$ Furthermore,
\begin{align*}
    |h(0)-g_0(0)|_{g_0(0)} = Q^* (|g(0)-\tilde{g}(0)|) < \delta
\end{align*}
in $\{ \mathbf{r}\leq 2(\Lambda+2)\}$ since $(M,g(0))\in \mathcal{M}(\delta,\Lambda,s).$ Finally, on $\mathcal{D}_{\gamma,\Lambda,s,\eta}^{out}\cap (M\times [0,\tau_0])$ we can use \eqref{paraboundary} to get
\begin{align}\label{ends_assumption}
 \nonumber   |h(t)-g_0(t)|_{g_0(t)}+\sqrt{1+t}|\nabla^{g_0(t)}h(t)|_{g_0(t)} &= |s^{-1}Q^*\hat{g}(st)- s^{-1}Q^*\tilde{g}(st)|_{g_0(t)} \\
    &+ \sqrt{1+t}|\nabla^{g_0(t)}s^{-1}Q^*\hat{g}(st)|_{g_0(t)}\\
 \nonumber   &\leq Q^*\left( |\hat{g}-\tilde{g}|_{\tilde{g}}+ \sqrt{ts+s} |\nabla^{\tilde{g}}\hat{g}|_{\tilde{g}}\right)(st)\\
 \nonumber   & \leq 1
\end{align}
 in $\left(\{\mathbf{r}\leq 3/4\}\times [-\eta/2,-\eta/2+1]\right) \cup \left( \{\mathbf{r}\leq 3/4\}\times [\eta/2-1,\eta/2]\right)$ for $t\in [0,\tfrac{1}{s}\min \{\tfrac{1}{\lambda_0},T\}].$

We can now apply Lemma \ref{localstab} with $M=1.$ Thus, for every $\varepsilon>0,$ there exists $\delta_0(g_N,\varepsilon,k)>0$ such that if $\delta\leq \delta_0,$  
\begin{align*}
    \sup_{D'_{\eta}}|(t\partial_t)^a(\sqrt{t}\nabla^{g_0(t)})^b(h(t)-g_0(t))|_{g_0(t)}< \varepsilon
\end{align*}
for any $0\leq a+b\leq 2k.$ The same follows for $\hat{g}(t)-\tilde{g}(t),$ i.e., 
\begin{align*}
    |(t\partial_t)^a(\sqrt{t}\nabla^{\tilde{g}(t)})^b(\hat{g}(t)-\tilde{g}(t))|_{\tilde{g}(t)}< \varepsilon
\end{align*}
for every $(x,t)\in M\times [0,\tau_0]$ with $r_s(x)\leq \tfrac{3}{2}\sqrt{\gamma t +s(\Lambda+1)^2}$ and $-\frac{\eta}{4}\leq l_s(x)\leq \frac{\eta}{4}.$

We now show these estimates hold along the whole curve. As mentioned before, we will cover the curve with sub-regions as the one above and overlap them so that every point of $\mathcal{D}^{out}$ is always contained in some interior region $\mathcal{D}^{in}.$ Let $n_0\in \mathbb{N}$ be such that $n_0\geq \tfrac{L}{\eta_0}.$ We define the regions
\begin{align*}
    \mathcal{D}_{\gamma,\Lambda,s,2i-1}^{exp}= \bigg\{ &(x,t)\in M\times [0,\tau_0]; \hspace{0.1cm} r_s\leq \tfrac{3}{2} \sqrt{\gamma t +s(\Lambda+1)^2},\\
    & l_s(x)\in \left[(2(i-1)-n_0)\tfrac{L}{2n_0},(2i-n_0)\tfrac{L}{2n_0}\right]  \bigg\}=:\mathcal{D}_{2i-1},
 \end{align*}  
 and
 \begin{align*}
   \mathcal{D}_{\gamma,\Lambda,s,2j}^{exp}= \bigg\{ & (x,t)\in M\times [0,\tau_0];\hspace{0.1cm}  r_s\leq \tfrac{3}{2} \sqrt{\gamma t +s(\Lambda+1)^2},\\
   &l_s(x)\in \left[ (2j-1 -n_0)\tfrac{L}{2n_0},  (2j+1-n_0)\tfrac{L}{2n_0}\right] \bigg\}
   =:\mathcal{D}_{2j},
\end{align*}
for every $i,j\in \{1,2,...,n_0\}.$ Thus,  $\displaystyle{\mathcal{D}_{\gamma,\Lambda,s}^{exp}\cap \left(M\times[0,\tau_0]\right) \subset \bigcup_{k=1}^{2n_0}\mathcal{D}_k.}$ Furthermore, for $k\in \{1,..., 2n_0-1\},$ the following inclusions hold: 
\begin{align}\label{inclusions}
  \nonumber  & \mathcal{D}_{k}^{out}\subset \mathcal{D}_{k-1}^{in}\cup\mathcal{D}_{k+1}^{in},\\
&\mathcal{D}_{1}^{out}\subset\mathcal{D}_{2n_0}^{in}\cup\mathcal{D}_{2}^{in},\\
\nonumber & \mathcal{D}_{2n_0}^{out}\subset\mathcal{D}_{2n_0-1}^{in}\cup\mathcal{D}_{1}^{in},
\end{align}
with $\displaystyle{\mathcal{D}_{k}^{out}}$ and $\displaystyle{\mathcal{D}_{k}^{in}}$ defined as before.

So far, we have obtained estimate \eqref{expandingcontrol}
\begin{align*}
     (t+s)^{j/2}|(\nabla^{\tilde{g}})^j(\hat{g}-\tilde{g})|_{\tilde{g}} < \varepsilon,
\end{align*}
for every $(x,t) \in \displaystyle{\mathcal{D}_{2i-1}^{in}},$ for any $i\in \{0,1,...,n_0\}.$ However, by the inclusions in \eqref{inclusions}, any point $\displaystyle{(x,t)\in \mathcal{D}_{k}^{out}}$ is such that $\displaystyle{(x,t)\in \mathcal{D}_{k-1}^{in}\cup\mathcal{D}_{k+1}^{in}}$ (analogously if $\displaystyle{(x,t)\in {\mathcal{D}_{1}^{out}},\mathcal{D}_{2n_0}^{out}).}$ In particular, after applying the same reasoning finitely many times (but at least $2n_0$ times), we can make sure any point in $\displaystyle{\mathcal{D}_{\gamma,\Lambda,s}^{exp}\cap \left(M\times[0,\tau_0]\right)}$ is inside an interior region $\displaystyle{\mathcal{D}^{in}_k}.$ It follows that assumption \eqref{paraboundary} can be improved to
\begin{align}\label{improved_out}
     (t+s)^{j/2}|(\nabla^{\tilde{g}})^j(\hat{g}-\tilde{g})|_{\tilde{g}}(x) < \varepsilon
\end{align}
for every such $\displaystyle{(x,t)\in \mathcal{D}_{k}^{out}.}$

Therefore, at $t=\tau_0,$ \eqref{paraboundary} (and, thus, \eqref{ends_assumption}) holds with the LHS less or equal to $\varepsilon < 1.$ This is already a contradiction with the definition of $\tau_0$ being the maximal time $t\in [0,T]$ such that \eqref{paraboundary} holds with $C=1.$ Then, it follows that $\displaystyle{\tau_0\geq \min\{\tfrac{1}{32\gamma},\tfrac{1}{\lambda_0},T\}}$ and the estimates in the statement of the lemma hold for $t\in [0, \min\{\tfrac{1}{32\gamma},\tfrac{1}{\lambda_0}\}].$ Without loss of generality, choose $\displaystyle{\tfrac{1}{32\gamma}\leq \tfrac{1}{\lambda_0}}.$

Putting everything together, we conclude that our estimates hold along the whole curve and depend only on the assumptions for the conical region and for the initial data, i.e., our choice of $\delta>0.$ Thus,
\begin{align*}
    (t+s)^{j/2}|(\nabla^{\tilde{g}})^j(\hat{g}-\tilde{g})|_{\tilde{g}} < \varepsilon
\end{align*}
for every $(x,t)\in \mathcal{D}_{\gamma,\Lambda,s}^{exp},$ which finishes the proof of the lemma.

\end{proof}

\subsection{Uniform curvature estimates}
We now put the estimates from the previous section together to prove curvature estimates for a complete Ricci flow starting in the class $\mathcal{M}(\delta,\Lambda, s).$ The proof now is essentially the same as in \cite{conicalsing}, so we refer the reader to it for the details.

\begin{thm} \label{uniCurvature}
Given $\Lambda >0,$ there exist $\delta_0(g_N), s_0(\Lambda), t_0(g_N), C(g_N)$ such that for every $s\in (0,s_0]$ the following holds. If $(M,g(t))_{t\in [0,T]}$ is a complete Ricci flow with bounded curvature and initial data satisfying $(M,g(0))\in \mathcal{M}(\delta_0,\Lambda,s),$ then
\begin{align*}
    &\max_{\{r_s \leq 3/4\}} |Rm(g(t))|_{g(t)}\leq \frac{C}{t}, \hspace{0.2cm} \text{for}\hspace{0.1cm} t \in (0,\min\{t_0,T\}],\\
    & \max_{\{r_s\leq 3/4\}}r_s^{j+2}|(\nabla^{g(t)})^j Rm(g(t))|_{g(t)} \leq C,
\end{align*}
for $t \in (0,\min\{t_0,T\} ].$ Furthermore, for every $\varepsilon >0$ and integer $k\geq 0,$ there exist $\delta_1=\delta_1(\varepsilon,k,g_N)$ and $\gamma_1=\gamma_1(\varepsilon,k,g_N)$ such that if $s\in (0,s_0]$ and $\gamma \geq \gamma_1,$ then we have the following. If $(M,g(0))\in \mathcal{M}(\delta_1,\Lambda,s),$ then for every $t\in (0, \min\{ (32\gamma)^{-1},T\}] $ there is a map 
\begin{align*}
    \Theta_{s,t}: \left\{ r_s \leq \frac{5}{4}\sqrt{\gamma t+ s(\Lambda+1)^2}  \right\} \longrightarrow N\times\mathbb{R},
\end{align*}
a diffeomorphism onto its image, such that 
\begin{align*}
    \{ \mathbf{r}_s\leq \sqrt{\gamma t}\}\times [-L/2,L/2] &\subset Im(\Theta_{s,t})\\
    &\subset \{ \mathbf{r}_s \leq \tfrac{3}{2}\sqrt{\gamma t +s(\Lambda+1)^2}\} \times [-L/2, L/2],
\end{align*} 
and for any non-negative index $j\leq k,$
\begin{align*}
    |(\sqrt{t+s})\nabla^{(g_{\exp}(t+s))})^j ((\Theta_{s,t}^{-1})^* g(t)-{g_{\exp}}(t+s))|_{{g_{\exp}}(t+s)} < \varepsilon 
\end{align*}
in $Im(\Theta_{s,t}),$ where $g_{\exp}(t)=g_N(t)\otimes \gcan(t).$
\end{thm}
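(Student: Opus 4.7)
The plan is to combine the conical and expanding region estimates already proven in this section to obtain global curvature bounds. Set up the Ricci-DeTurck flow as in the statement of Lemma \ref{conelemma}: solve the harmonic map heat flow \eqref{HMRF} with background metric $\tilde{g}(t)$ defined in \eqref{background_metrics}, and let $\hat{g}(t) = (\psi_t^{-1})^* g(t)$ solve \eqref{RDTF-changingbackground}. A standard argument using that $g(0)$ is $\delta$-close to $\tilde{g}(0)$ on $\{r_s \leq 3/4\}$ yields a short-time a priori bound $|\nabla \psi|_{g,\tilde{g}} \leq B$ for some fixed $B$ independent of $s$, valid on a time interval depending only on initial data.

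With this a priori bound in hand, Lemma \ref{conelemma} applies and provides, for suitable parameters $\alpha$ small and $\gamma \geq \gamma_2(B,\alpha)$, the closeness estimate $|\hat{g}-\tilde{g}|_{\tilde{g}} + r_s|\tilde{\nabla}\hat{g}|_{\tilde{g}} < \alpha$ together with the curvature bound $\sum_{j=0}^{2}r_s^{j+2}|(\nabla^g)^j Rm(g)|_g \leq C(G)$ on $\mathcal{D}_{\gamma,\Lambda+1,s}^{cone}\cap (M\times[0,\min\{(32\gamma)^{-1},T\}])$. This already yields the desired bounds of the form $r_s^{j+2}|(\nabla^g)^j Rm(g)|_g \leq C$ in the conical region, and in particular the bound $|Rm(g(t))|_{g(t)} \leq C/t$ on $\{r_s \geq \sqrt{\gamma t+s(\Lambda+1)^2}\}$ since there $r_s^2 \geq \gamma t$.

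Next, feeding the conical estimate into Lemma \ref{expEstimate} extends control to the expanding region: for every $\varepsilon>0$ and every $k$, provided $\delta \leq \delta_0(g_N,\varepsilon,k)$, we get $(t+s)^{j/2}|(\nabla^{\tilde{g}})^j(\hat{g}-\tilde{g})|_{\tilde{g}} < \varepsilon$ in $\mathcal{D}_{\gamma,\Lambda,s}^{exp}$ for $0\leq j\leq k$. On this region $\tilde{g}(t) = (\Phi_s^{-1})^* g_0(t+s)$, so composing with the map $Q = \Phi_s \circ \tilde{\phi}_s^{-1}$ (as in the proof of Lemma \ref{expEstimate}) and using the Type III curvature decay of $g_0(t)$ gives $|Rm(g(t))|_{g(t)} \leq C/(t+s) \leq C/t$ in the expanding region, along with higher-order derivative bounds through the $C^k$ closeness to $g_0(t+s)$. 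Combining the two regions, which overlap on the annulus $r_s \sim \sqrt{\gamma t+s(\Lambda+1)^2}$, gives the global curvature estimates on $\{r_s \leq 3/4\}$.

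For the second part, define $\Theta_{s,t} = \tilde{\phi}_{t+s} \circ \tilde{\phi}_s^{-1} \circ Q^{-1} \circ \psi_t^{-1}$ (or rather the natural composition arising from the change-of-variables used in Lemma \ref{expEstimate}) and take the inverse image of the appropriate sub-region of $\expander$. The closeness of $\hat g(t)$ to $\tilde g(t)$ together with that of $\psi_t$ to the identity, both controlled by $\varepsilon$, show that $\Theta_{s,t}$ is a diffeomorphism onto its image containing $\{\mathbf r_s \leq \sqrt{\gamma t}\}\times [-L/2,L/2]$ and contained in $\{\mathbf r_s \leq \tfrac{3}{2}\sqrt{\gamma t+s(\Lambda+1)^2}\}\times[-L/2,L/2]$, and that $(\Theta_{s,t}^{-1})^*g(t)$ is $C^k$-close to $g_0(t+s)$ in the sense claimed. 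The main obstacle is the bootstrap on the harmonic map heat flow: one must verify that the a priori gradient bound $|\nabla \psi|_{g,\tilde g}\leq B$ used by Lemma \ref{conelemma} indeed persists up to time $\min\{(32\gamma)^{-1},T\}$. This is done by a standard continuity argument, using the closeness estimates just obtained to bound the evolution of $|\nabla \psi|$ and iterate; the estimates at earlier times close up the bootstrap and allow us to absorb the time interval into $t_0(g_N)$.
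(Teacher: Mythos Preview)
Your proposal is correct and follows essentially the same approach as the paper: combine Lemma~\ref{conelemma} and Lemma~\ref{expEstimate}, with a continuity/loop argument to propagate the a priori bound $|\nabla\psi|_{g,\tilde g}\leq B$ (the paper calls this the ``loop argument'' and defers details to \cite{conicalsing}). One small correction: your explicit formula for $\Theta_{s,t}$ is off. The paper takes simply $\Theta_{s,t}=\Phi_s^{-1}\circ\psi_t$ (the paper writes $\Phi_s\circ\psi_t$, which appears to be a typo), so that $(\Theta_{s,t}^{-1})^*g(t)=\Phi_s^*\hat g(t)$ and closeness of $\hat g$ to $\tilde g=(\Phi_s^{-1})^*g_0(t+s)$ immediately gives the claimed estimate; your version with $\psi_t^{-1}$ would produce $\Phi_s^*\psi_t^*g(t)$ rather than $\Phi_s^*\hat g(t)$, and the extra $\tilde\phi_{t+s}$ factor is unnecessary. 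The image inclusions in the $\mathbb{R}$-direction follow from $|\tilde\nabla l_s|_{\tilde g}=1$, while in the $N$-direction the paper invokes \cite[Theorem~3.1]{conicalsing}.
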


\begin{proof}
The curvature estimates follow from Lemma \ref{conelemma} and Lemma \ref{expEstimate}, applying a loop argument to show that the threshold $B$ from Lemma \ref{conelemma} can always be avoided (see \cite{conicalsing} for the details).

The statement for $\Theta_{t,s}$ follows by defining $\Theta_{s,t}=\Phi_s \circ \psi_t$ and combining, again, lemmata \ref{conelemma} and \ref{expEstimate}. The inclusions for $Im(\Theta_{t,s})$ are a direct application of \cite[Theorem 3.1]{conicalsing} and the fact that
\begin{align*}
    |\tilde{\nabla}l_s |_{\tilde{g}}(x,t)=|\nabla^{g_0(t/s)}l|_{g_0(t/s)}(\phi_s(\Phi_s^{-1}(x)))=1,
\end{align*}
and $Im(l_s) = [-L/2,L/2].$
\end{proof}

If, in addition, we assume a bound $|Rm(g(0))| \leq A$ outside the conical and expanding regions, we obtain a global bound for $Rm$ in time as a direct application of Shi's estimates and the maximum principle.

\begin{cor}\label{CurvCorollary}
Let $(M,g)\in \mathcal{M}(\delta_0,\Lambda,s)$ for $s\in (0,s_0],$ where $\delta_0(g_N)$ and $s_0(\Lambda)$ are given by Theorem \ref{uniCurvature}. Suppose that
\begin{align*}
    \sup_{M\backslash Im(\Phi_s)}|Rm(g)|)g \leq A.
\end{align*}
Then, there exists $T(A,g_N)$ and $C(A,g_N)$ such that the Ricci flow $g(t)$ with $g(0)=g$ exists for $t\in [0,T(A,g_N)]$ and satisfies
\begin{align*}
    \max_{M\times (0,T]}|Rm(g(t))|_{g(t)} \leq \frac{C(A,g_N)}{t}.
\end{align*}
Moreover, all the conclusions from Theorem \ref{uniCurvature} hold.

\end{cor}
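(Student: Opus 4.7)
The plan is to combine Theorem \ref{uniCurvature}, which controls the flow on a neighbourhood $\{r_s\leq 3/4\}$ of the curve $\Gamma_s$ with an $s$-uniform but $t$-singular bound, with the initial bound $|Rm(g)|_g\leq A$ on $M\setminus \mathrm{Im}(\Phi_s)$, via a standard maximum principle argument on the complementary region. The global $C(A,g_N)/t$ estimate then rules out finite-time curvature blow-up and extends the short-time existence provided by Hamilton's theorem.

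Since $(M,g)\in \mathcal M(\delta_0,\Lambda,s)$ is complete with bounded curvature (though with an $s$-dependent bound), Hamilton's theorem provides a maximal solution $g(t)$ on $[0,T_{\max})$. Setting $T_1=\min\{t_0(g_N),T_{\max}\}$, Theorem \ref{uniCurvature} gives $|Rm(g(t))|_{g(t)}\leq C(g_N)/t$ and $r_s^{j+2}|(\nabla^{g(t)})^j Rm(g(t))|_{g(t)}\leq C(g_N)$ on $\{r_s\leq 3/4\}$ for $t\in (0,T_1]$. In particular, the $j=0$ estimate yields the uniform collar bound
\begin{equation*}
|Rm(g(t))|_{g(t)} \leq 4C(g_N) \quad \text{on} \quad \{\tfrac12\leq r_s \leq \tfrac34\},\quad t\in(0,T_1].
\end{equation*}
On the complement $\Omega:=\{r_s\geq 1/2\}\cup (M\setminus\mathrm{Im}(\Phi_s))$, the initial curvature is uniformly bounded by some $\tilde A=\tilde A(A,g_N)$: by $A$ on $M\setminus \mathrm{Im}(\Phi_s)$ by hypothesis, and by a $g_N$-dependent constant on the annulus $\{1/2\leq r_s\leq 1\}$ via \eqref{closeToCone} in Definition \ref{aproxclass}, which forces the initial metric to be $\delta_0$-close to the (bounded-curvature) product cone metric $G$ there.

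Now apply the parabolic maximum principle to the evolution inequality
\begin{equation*}
\partial_t |Rm|^2 \leq \Delta_{g(t)}|Rm|^2 - 2|\nabla Rm|^2 + C_n|Rm|^3
\end{equation*}
on $\Omega\times [0,\min\{T_1,T_{\max}\}]$, using the collar estimate as parabolic boundary data on $\{r_s=1/2\}$ and $\tilde A$ as initial data. Since the Ricci flow has bounded curvature on $M$ up to any $t<T_{\max}$, the maximum principle applies globally, and comparison with the ODE $\dot\phi = C_n\phi^{3/2}$ (whose solution with initial value $K$ stays below $2K$ for time $\leq c/\sqrt{K}$) yields a constant $T(A,g_N)>0$ and $\tilde C(A,g_N)$ such that $|Rm(g(t))|_{g(t)}\leq \tilde C$ on $\Omega\times [0,\min\{T_1,T(A,g_N)\}]$. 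Combining with the $C(g_N)/t$ bound on $\{r_s\leq 3/4\}$ gives the global estimate $|Rm(g(t))|_{g(t)}\leq C(A,g_N)/t$ on $M\times (0,\min\{T_1,T(A,g_N)\}]$.

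Since this global bound does not blow up before $T(A,g_N)$, Shi's estimates together with the standard extension criterion imply $T_{\max}\geq T(A,g_N)$, so the flow exists on $[0,T(A,g_N)]$ and satisfies the stated curvature decay; the remaining conclusions of Theorem \ref{uniCurvature} then apply verbatim on this interval. The main obstacle is a technicality in the maximum principle step: the region $\Omega$ has parabolic boundary $\{r_s=1/2\}\times[0,T]$ whose "boundary data" is only a time-dependent interior bound on the flow, not a Dirichlet condition, so one must either argue via a cutoff localization or, more simply, observe that the global boundedness of curvature for $t<T_{\max}$ justifies a direct application of the maximum principle on all of $M$, with the collar estimate serving only to guarantee that the supremum propagated from $\Omega$ dominates the values in the conical core.
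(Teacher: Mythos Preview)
Your proposal is correct and follows the same route that the paper sketches in one sentence (``a direct application of Shi's estimates and the maximum principle''): use Theorem \ref{uniCurvature} for the bound $C(g_N)/t$ on $\{r_s\leq 3/4\}$, extract from the radial estimate a uniform collar bound on $\{1/2\leq r_s\leq 3/4\}$, and run an ODE comparison on the complementary region $\Omega$ with initial bound $\tilde A(A,g_N)$; the resulting global $C(A,g_N)/t$ bound then forces $T_{\max}\geq T(A,g_N)$ by the standard extension criterion. One small comment on the technicality you flag at the end: your second proposed resolution (applying the maximum principle on all of $M$) does not work as stated, since the initial supremum over $M$ is of order $1/s$ and the ODE comparison would only give existence time $\sim\sqrt{s}$; the correct route is the first one you mention, namely the maximum principle on $\Omega$ with the collar supplying the (time-uniform) boundary barrier $4C(g_N)$, which is entirely standard.
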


\section{The Ricci flow out of spaces with edge type conical singularities}\label{theSolution}
In this section, we assume that $r_0=1,$ which is true module a rescaling argument, $\Lambda_1\geq \Lambda_0,$ and that $\kappa >0$ is such that $\gamma_Z(r) < \kappa$ for $r\in (0,1].$
We will approximate $(Z,g_Z)$ by removing small conical neighbourhoods of points in $\Gamma$ and glueing in small neighbourhoods of points of the spine of $\expander$ at scale $s>0.$ After making sure we have a smooth Riemannian manifold, we can flow it by Ricci flow. The estimates from Section \ref{uniformEstimates} will allow us to pass to a limit solution as $s\searrow 0.$ This limit will be shown to converge back to $(Z,g_Z)$ in the pointed Gromov--Hausdorff topology and, away from the singular curve, in a locally smooth sense.

\subsection{The approximating sequence}
Let $s\in (0,1/2]$ and $\{p_1,p_2,...,p_{n_0}\}\subset\Gamma$ be points on $\Gamma$ such that $\displaystyle{\mathbf{l}\left( \Gamma_{|_{[p_k,p_{k+1}]}} \right) =\frac{L}{n_0}=\eta,}$ for every $k \in \{1,2,...,n_0\},$ where $0<\eta\leq \eta_0$ and we consider $p_{n_0+1}=p_1.$ For each $p_k,$ we pick $q_k\in \Gamma$ to be the middle point between $p_k$ and $p_{k+1},$ and consider the map $\phi_k := \phi_{q_k}$ from Definition \eqref{singular_space}, i.e., $\displaystyle{\phi_k: (0,1] \times X \times [-\eta/2,\eta/2]\longrightarrow Z.}$ Thus, $\phi_k$ parametrises a neighbourhood of $q_k\in \Gamma.$ We then define $M_s$ by an iteration method. 

First, let
\begin{align*}
    Z_{s}^k:= Z\backslash \phi_k\left( (0,s^{1/4}]\times X\times [-\eta/2,\eta/2] \right)
    \end{align*}
and
\begin{align*}
    Z_s:= Z\backslash \bigcup_{k=1}^{n_0}\phi_k\left( (0,s^{1/4}]\times X\times [-\eta/2,\eta/2] \right).
\end{align*}
We will work with a subdivision of $I_L:=[-L/2,L/2]$ by sub-intervals of length $\eta.$ We denote $I_{\eta}^{k}:=[-L/2 +(k-1)\eta, -L/2 +k\eta]$ for each $1\leq k \leq n_0.$ Then, we define
\begin{align*}
    M_{s}^1:= \frac{Z_{s}^1 \bigsqcup \left(\{\mathbf{r}_s\leq 1\}\times I_{\eta}^{1} \right) }{\{\phi_1(r,\cdot,l+L/2-\eta/2)=({F}_s(r,\cdot),l), \hspace{0.2cm} r\in[s^{1/4},1], \hspace{0.2cm} l\in I_{\eta}^{1} \}}.
\end{align*}
Using $M_{s}^1$ we can define $M_{s}^2:$
\begin{align*}
    M_{s}^2=\frac{\left(M_{s}^1\backslash \phi_2( (0,s^{1/4}]\times X\times [-\eta/2,\eta/2])\right) \bigsqcup\left( \{\mathbf{r}_s\leq 1\}\times I_{\eta}^{2} \right) }{\thicksim_2},
\end{align*}
where $\thicksim_2$ is given by the following identification: 
\begin{itemize}
   \item $\phi_2(r,v,l+L/2-3\eta/2)=({F}_s(r,v),l),$ for $v\in X,$ $l\in I_{\eta}^{2}$ and $r\in [s^{1/4},1],$
   \item $\phi_2(r,v,-\eta/2)=\phi_1(r,v,\eta/2),$ for $r\in (0,1]$ and $v\in X.$
\end{itemize}
Now, assuming $M_{s}^{k-1}$ is defined for $1<k<n_0$, we define $M_{s}^k$ as 
\begin{align*}
    M_{s}^k=\frac{\left(M_{s}^{k-1}\backslash \phi_k( (0,s^{1/4}]\times X\times[-\eta/2,\eta/2])\right) \bigsqcup \left(\{\mathbf{r}_s\leq 1\}\times I_{\eta}^{k}\right) }{\thicksim_k},
\end{align*}
where $\thicksim_k$ is given by:
\begin{itemize}
    \item $\phi_k(r,v,l+L/2-(2k-1)\eta/2)=({F}_s(r,v),l),$ for $v\in X,$ $l\in I_{\eta}^{k}$ and $r\in [s^{1/4},1],$
    \item $\phi_k(r,v,-\eta/2)=\phi_{k-1}(r,v,\eta/2),$ for $r\in (0,1]$ and $v\in X.$
\end{itemize}

Finally, we define $M_s$ as follows. 
\begin{align*}
    M_{s}=\frac{\left(M_{s}^{n_0-1}\backslash \phi_{n_0}( (0,s^{1/4}]\times X\times [-\eta/2,\eta/2])\right) \bigsqcup\left( \{\mathbf{r}_s\leq 1\}\times [L/2-\eta,L/2]\right) }{\thicksim_{n_0}},
\end{align*}
where $\thicksim_{n_0}$ is given by:
\begin{itemize}
    \item $\phi_{n_0}(r,v,l-L/2 +\eta/2)=({F}_s(r,v),l),$ for $v\in X,$ $l\in[L/2-\eta,L/2]$ and $r\in [s^{1/4},1],$
    \item $\phi_{n_0}(r,v,-\eta/2)=\phi_{n_0-1}(r,v,\eta/2),$ for $r\in (0,1]$ and $v\in X,$
    \item $\phi_{n_0}(r,v,\eta/2)=\phi_{1}(r,v,-\eta/2),$ for $r\in (0,1]$ and $v\in X.$
\end{itemize}

\begin{rem}
We are picking enough points in $\Gamma$ so that each small neighbourhood of these points is isometric to part of $C(X)\times\mathbb{R}$ and they cover the curve. By removing this neighbourhood and glueing in part of $N\times\mathbb{R},$ we smooth out the curve. The equivalence relations above make sure our final manifold is smooth. 
\end{rem}

Given the construction above, we can consider the natural embeddings 
\begin{align*}
    \Phi_s : \{ \mathbf{r}_s\leq 1 \} \times [-L/2,L/2]\longrightarrow M_s \hspace{0.2cm} \text{and} \hspace{0.2cm}  \Psi_s : Z_s \longrightarrow M_s,
\end{align*}
under the identification $\displaystyle{\Phi_s(\cdot,-L/2)=\Phi_s(\cdot,L/2)}.$ We also define the functions $r_s$ and $l_s$ by
\begin{align*}
   r_s(x)= \begin{cases}
          \Lambda_1\sqrt{s},\hspace{0.2cm} \text{if } x\in \Phi_s(\{\mathbf{r}_s\leq \Lambda_1\sqrt{s}\}\times [-L/2,L/2]),\\
           (\pi_1\circ \Phi_s^{-1})^*\mathbf{r}_s(x),\hspace{0.2cm} \text{if } x\in \Phi_s(\{ \Lambda_1 \sqrt{s}\leq \mathbf{r}_s\leq 1\}\times[-L/2,L/2]),\\
           1,\hspace{0.2cm} \text{if } x \in M_s\backslash Im(\Phi_s)
\end{cases}
\end{align*}
and 
\begin{align*}
     l_s(x):=\begin{cases}
            (\pi_2 \circ \Phi_s^{-1})(x) \hspace{0.2cm}\text{if } x \in Im(\Phi_s),\\
            L/2, \hspace{0.2cm} \text{if } x \in M_s\backslash Im(\Phi_s). 
\end{cases}
\end{align*}
This allows us to define $\displaystyle{\Gamma_s:= \Phi_s\left(\{q_{\max}\}\times [-L/2,L/2]\right)}.$ Furthermore, we define our parametrisation of the conical region, $\displaystyle{ \phi: [s^{1/4}, 1]\times X \times [-L/2,L/2]\rightarrow M_s},$ as follows. If $l\in[-L/2+(k-1)\eta,-L/2+ k\eta],$ let $\displaystyle{\phi(\cdot,\cdot,l)=\phi_k(r,v,l+L/2-(2k-1)\eta/2)}.$ In particular, we have that $\displaystyle{ r_s= \left( \left( \Psi_s \circ \phi\right)^{-1}\right)^*r}$ and $\displaystyle{l_s= \left( \left( \Psi_s \circ \phi\right)^{-1}\right)^*l}$ in $Im(\Phi_s)\cap Im(\Psi_s).$

To define the metric on $M_s,$ we consider the following regions on our manifold:
\begin{align*}
    U_{2i-1}:= \left\{ x \in M; r_s(x) < 1 \hspace{0.2cm} \text{and} \hspace{0.2cm} l_s(x) \in \left[\frac{2(i-1)-n_0}{2n_0}L, \frac{2i-n_0}{2n_0}L\right) \right\}
\end{align*}
and
\begin{align*}
    U_{2j}:= \left\{ x \in M; r_s(x) < 1 \hspace{0.2cm} \text{and} \hspace{0.2cm} l_s(x) \in \left[\frac{2j-1-n_0}{2n_0}L, \frac{2j+1-n_0}{2n_0}L\right) \right\},
\end{align*}
for $i,j\in \{ 1,2,...,n_0\},$ where on $U_{2n_0},$ we are identifying
\begin{align*}
    \left[\frac{n_0-1}{2n_0}L, \frac{n_0 +1}{2n_0}L\right) \cong \left[\frac{n_0-1}{2n_0}L, L/2\right)\cup \left[\frac{-L}{2}, \frac{1-n_0}{2n_0}L \right).
\end{align*}
We consider $\displaystyle{\{f_k\}_{k=1}^{2n_0}}$ a differentiable partition of unit subordinated to ${\{U_k\}_{k=1}^{2n_0}}.$ Let also $\Phi_{s,k}$ be the restriction of $\Phi_s$ to $\Phi_s^{-1}(U_k)$ and let $g_{\exp,k}(s)$ be the metric induced by $g_{\exp}(s)$ on $\Phi_s^{-1}(U_k).$ We then define
\begin{align*}
    g_s=\xi_3\left(\tfrac{r_s}{s^{1/4}}\right)\left( \sum_{k=1}^{2n_0}f_k(\Phi_{s,k}^{-1})^*g_{\exp,k}(s) \right) + \left(1-\xi_3\left(\tfrac{r_s}{s^{1/4}}\right)\right)(\Psi_s^{-1})^*g_Z,
\end{align*}
where $\xi_3$ is a smooth, positive and non-increasing function equal to 1 in $(-\infty,1]$ and 0 in $[2,+\infty).$  Therefore,
\begin{align*}
    g_s= \begin{cases}
      (\Psi_s^{-1})^*g_Z \hspace{0.2cm} \text{in} \hspace{0.2cm} \{ r_s \geq 2 s^{1/4}\},\\
      \sum_{k=1}^{2n_0}f_k(\Phi_{s,k}^{-1})^*g_{\exp,k}(s) \hspace{0.2cm} \text{in} \hspace{0.2cm} \{ r_s \leq s^{1/4}\}.
    \end{cases}
\end{align*}

Let $A< \infty$ be such that $\displaystyle{ \max_{r_s=1}|Rm(g_s)|_{g_s} \leq A,}$ which exists because of the definition of $g_s.$ We then consider $\delta_0=\delta_0(g_N)>0$ as in Theorem \ref{uniCurvature}. Choosing $\kappa$ small and $\Lambda_1$ large, we get $(M_s,g_s)\in \mathcal{M}(\delta_0,\Lambda_1,s),$ where $\eta_0=\eta_0(g_Z).$ 
 
\subsection{The limit solution}

We now consider Ricci flows $g_s(t)$ on $M_s,$ with $g_s(0)=g_s$ and $t\in [0,T],$ where $T>0$ is given by Corollary \eqref{CurvCorollary}. It follows from the same corollary that 
\begin{align}\label{curvatureDecay}
    \max_{M_s}|Rm(g_s(t))|_{g_s(t)} \leq \frac{C_M}{t}
\end{align}
for $t\in (0,T],$ and 
\begin{align}\label{radialDecay}
    \max_{M_s}\sum_{j=0}^{2} r_s^{j+2}|(\nabla^{g_s(t)})^j Rm(g_s(t))|_{g_s(t)} \leq C_M,
\end{align}
for $t\in [0,T].$ In particular, the second estimate yields $\displaystyle{Vol_{g_s(t)}(B_{g_s(t)}(x,t))\geq v_0,}$ for $t\in [0,T]$ and some $x\in \{r_s=1\}.$

Using Hamilton's compactness theorem for Ricci flows, we consider a sequence $s_l \searrow 0$ and, up to a subsequence, there exists a smooth, compact Ricci flow $(M,g(t))_{t\in (0,T]}$ such that 
\begin{align*}
    (M_{s_l},g_{s_l}(t))_{t\in (0,T]} \rightarrow (M,g(t))_{t\in (0,T]},
\end{align*}
where the convergence is given by the existence of diffeomorphisms $H_l:M_{s_l}\rightarrow M$ such that 
\begin{align}\label{RFconvergence}
    H_l^{*}g_{s_l}(t) \rightarrow g(t)
\end{align}
uniformly locally in $M\times (0,T]$ in the $C^{\infty}-$topology. By defining $\displaystyle{\tilde{\Psi}_{s_l}=H^{-1}_{s_l}\circ \Psi_{s_l}:Z_{s_l}\rightarrow M},$ we can show that there is a map $\Psi:Z\backslash \Gamma \rightarrow M,$ diffeomorphism onto its image, such that $\tilde{\Psi}_{s_l} \to \Psi$ in $C^{\infty}$ uniformly away from $\Gamma.$ This follows directly from the curvature bounds for $g_s(t)$ and Arzela-Ascoli (for an example of such construction, see \cite{conicalsing}).

\subsection{Properties of the limit solution}
From \eqref{curvatureDecay}, it follows that $g(t)$ satisfies
\begin{align}\label{curvDecayLimit}
    |Rm(g(t))|_{g(t)}\leq \frac{C_M}{t}
\end{align}
on $M\times (0,T].$ Moreover, given the smooth convergence of $\tilde{\Psi}_{s_l}$ to $\Psi,$ as well as their respective inverses, the identity $H_{s_l}^*r_{s_l}=(\Psi_{s_l}^{-1}\circ H_{s_l})^* (\phi^{-1})^*r$ in $Im(H_{s_l}^{-1}\circ \Psi_{s_l})$ implies that $\displaystyle{ H_{s_l}^{*}r_{s_l} \to (\Psi^{-1})^* ((\phi^{-1})^*r),}$ locally uniformly on $Im(\Psi)$ in the $C^{\infty}$ sense. We then define the following function on $M$:
\begin{align*}
    r_M = \begin{cases}
          (\Psi^{-1})^* ((\phi^{-1})^*r) \hspace{0.2cm} \text{in} \hspace{0.2cm} Im(\Psi\circ\phi),\\
          0 \hspace{0.2cm} \text{in} \hspace{0.2cm} (Im(\Psi))^c,\\
          1 \hspace{0.2cm} \text{otherwise}.
      \end{cases}
\end{align*}
The convergence for $H_{s_l}^*r_{s_l}$ and estimate \eqref{radialDecay} yield
\begin{align}\label{LimitRadialDecay}
    \sum_{j=0}^{2} r_M^{j+2}|(\nabla^{g(t)})^j Rm(g(t))|_{g(t)}\leq C_M
\end{align}
on $M\times (0,T].$ It is also straightforward to see that $\Psi^*g(t) \to g_Z$ as $t\to 0$ in $C^{\infty}_{loc},$ away from the singular curve $\Gamma.$

\subsection{Improvement of expanding estimates for small scales}

We show that if $s>0$ gets even smaller, the closeness of our solution to the expander improves.

\begin{lem}\label{SmallScales}
For every $\varepsilon>0$ and $k\in \mathbb{Z}_+,$ there exist $\eta_1(\varepsilon,k)>0,$ $s_2(\varepsilon,k)>0$ small and $\gamma_3(\varepsilon,k),$ $\Lambda_2(\varepsilon,k)$ large such that the following holds. For each $s\in (0,s_2],$ $\gamma\geq \gamma_3$ and $t\in (0,\eta_1(32\gamma)^{-1}],$ there is a map 
\begin{align*}
    \Theta_{s,t}:\left\{r_s\leq \frac{5}{4}\sqrt{\gamma t +s(\Lambda_2+1)^2} \right\} \longrightarrow \expander,
\end{align*}
diffeomorphism onto its image, such that $\forall\hspace{0.1cm} 0\leq j \leq k,$ 
\begin{align*}
    (t+s)^{j/2}\left| (\nabla^{g_{\exp}(t+s)})^j \left[(\Theta_{s,t}^{-1})^*g_s(t)-g_0(t+s)\right]\right|_{g_{\exp}(t+s)}< \varepsilon
\end{align*}
in $Im(\Theta_{s,t})$ and
\begin{align*}
    \left\{\mathbf{r}_s\leq \sqrt{\gamma t} \right\}\times [-L/2,L/2]& \subset Im(\Theta_{s,t})\\
    &\subset \left\{\mathbf{r}_s\leq \tfrac{3}{2}\sqrt{\gamma t +s(\Lambda_2+1)^2}\right\}\times [-L/2,L/2].
\end{align*}
\end{lem}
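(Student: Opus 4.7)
The plan is to reduce the statement to Theorem \ref{uniCurvature} applied to a parabolic rescaling of $(M_s, g_s(t))$. The difficulty is that $(M_s, g_s)$ only belongs to $\mathcal{M}(\delta_0, \Lambda_1, s)$ for the fixed $\delta_0 > 0$ determined by the uniform constant $\kappa$ of Definition \ref{singular_space}(4), while the quantitative part of Theorem \ref{uniCurvature} requires membership in $\mathcal{M}(\delta_1(\varepsilon,k,g_N), \Lambda, s)$ with $\delta_1$ possibly much smaller. Since $\kappa(r) \to 0$ as $r \to 0$, a spatial zoom into a smaller neighborhood of $\Gamma$ makes $g_s$ arbitrarily close to the conical model $G$ on the unit scale, and the expander at scale $s$ converges to the cone as $s \to 0$ by Lemma \ref{expander_to_cone}. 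The parameter $\eta_1$ in the statement is, up to constants, the square of the spatial zoom factor.

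Given $\varepsilon > 0$ and $k \in \mathbb{Z}_+$, I would take $\delta_1, \gamma_1$ and $\Lambda_2 \geq \Lambda_0$ from the second part of Theorem \ref{uniCurvature}, set $\gamma_3 := \gamma_1$, and choose $\eta_1 > 0$ small enough that $\kappa(\sqrt{\eta_1}) < \delta_1$. Define the parabolically rescaled flow $\hat g_s(\hat t) := \eta_1^{-1} g_s(\eta_1 \hat t)$ on $M_s$ and denote the rescaled expander scale by $\tilde s := s/\eta_1$. I would then verify $(M_s, \hat g_s(0)) \in \mathcal{M}(\delta_1, \Lambda_2, \tilde s)$ for all $s \leq s_2(\varepsilon, k)$ sufficiently small. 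The conical hypothesis \eqref{closeToCone} splits along the three-piece construction of $g_s$: on the outer piece $\{r_s \geq 2 s^{1/4}\}$, $g_s = (\Psi_s^{-1})^* g_Z$, so the relevant difference reduces to $\phi^* g_Z - G$, whose weighted $C^4$-norm is bounded by $\kappa(\sqrt{\eta_1}) < \delta_1$ on the range $r \in [\Lambda_2 \sqrt{s}, \sqrt{\eta_1}]$, which gets zoomed onto $[\Lambda_2 \sqrt{\tilde s}, 1]$; on the inner piece $\{r_s \leq s^{1/4}\}$ the partition-of-unity sum collapses to $(\Phi_s^{-1})^* g_0(s)$, so the difference reduces to $\tilde F_s^* g_0(s) - G$, which is small by Lemma \ref{expander_to_cone}; the transition region is handled by the triangle inequality and the controlled derivatives of $\xi_3$ at the natural scale $s^{1/4}$. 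The expander hypothesis \eqref{closeToExp} is automatic, since $\Phi_s^* g_s \equiv g_0(s)$ on $\{\mathbf{r}_s \leq s^{-1/4}\}$, which contains $\{\mathbf{r}_s \leq 2(\Lambda_2+1)\sqrt{\tilde s}\}$ for small $s$.

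With the membership established, Theorem \ref{uniCurvature} produces a diffeomorphism $\hat\Theta_{\tilde s, \hat t}$ onto its image with the claimed $\varepsilon$-closeness to $g_0(\tilde s + \hat t)$ in the weighted $C^k$-norm, on $\hat t \in (0, (32\gamma)^{-1}]$ for any $\gamma \geq \gamma_3$. Undoing the temporal rescaling via $t = \eta_1 \hat t$ and precomposing with the natural diffeomorphism $\mu: \expander \to \expander$ coming from the soliton's self-similar structure (so that $\mu^*(\eta_1^{-1} g_0(s)) = g_0(\tilde s)$) gives the required $\Theta_{s,t}$ on $t \in (0, \eta_1(32\gamma)^{-1}]$, and the inclusions in the lemma transfer from those of Theorem \ref{uniCurvature} because $\mathbf{r}_{\tilde s} = \eta_1^{-1/2} \mathbf{r}_s$ transforms compatibly with the rescaling. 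The main technical obstacle I anticipate is the careful weighted $C^4$-bookkeeping in the transition annulus $\{s^{1/4} \leq r_s \leq 2 s^{1/4}\}$ after the rescaling, together with the fact that $g_0(t) = t(\phi^t_0)^* g_0$ is not strictly scale-invariant so that the auxiliary diffeomorphism $\mu$ really has to be absorbed into $\Theta_{s,t}$ rather than ignored; both, however, follow from the estimates already used in the construction of $g_s$ and the smallness coming from $\kappa(\sqrt{\eta_1}) < \delta_1$ and Lemma \ref{expander_to_cone}.
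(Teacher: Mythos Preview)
Your proposal is correct and follows essentially the same route as the paper: parabolically rescale $(M_s,g_s)$ by $\eta_1^{-1}$, verify that the rescaled initial data lies in $\mathcal{M}(\delta_1,\Lambda_2,s/\eta_1)$ for suitable $\eta_1,\Lambda_2$ and small $s$, then apply the second part of Theorem \ref{uniCurvature} and scale back. The paper's proof is terse and defers most of the verification to \cite[Lemma 5.1]{conicalsing}, singling out only the behaviour of the $\mathbb{R}$-coordinate under rescaling (namely $l_{s/\eta_1}=\eta_1^{-1/2}l_s$, so the interval $[-L/2,L/2]$ stretches harmlessly); your diffeomorphism $\mu$ absorbs exactly this and the corresponding $N$-factor rescaling, so the two arguments coincide.
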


\begin{proof}
The proof follows \cite[Lemma 5.1]{conicalsing}. The idea is to show that the rescaled solution $\displaystyle{\tfrac{1}{\eta_1}g_s}$ satisfies $\displaystyle{(M_s,\frac{1}{\eta_1}g_s)\in \mathcal{M}(\delta_1,\Lambda_2,s/\eta_1)}$ for any $s\in (0,s_2],$ with map $\displaystyle{\Phi_{s/\eta_1}},$ and functions $\displaystyle{ r_{s/\eta_1}=\max \left\{ \Lambda_2\sqrt{\tfrac{s}{\eta_1}}, \min\left\{1,\tfrac{r_s}{\sqrt{\eta_1}}\right\}\right\}}$ and $\displaystyle{ l_{s/\eta_1}=\pi_2\circ \Phi_{s/\eta_1}^{-1}.}$ We only observe that 
\begin{align*}
    l_{s/\eta_1}=\pi_2\circ \tilde{\phi}_{1/\eta_1}^{-1}\circ \Phi_s^{-1}
    =\pi_2\circ \tilde{\phi}_{\eta_1}\circ \Phi_{s}^{-1} = \frac{1}{\sqrt{\eta_1}}l_s,
\end{align*}
where we used that $\tilde{\phi}_t\circ\tilde{\phi}_{t^{-1}}=id_{\expander}$ and $\phi^{\mathbb{R}}_{t}=\frac{1}{\sqrt{t}}id_{\mathbb{R}}.$ Thus, $\displaystyle{l_{s/\eta_1}\in \left[\tfrac{-L}{2\sqrt{\eta_1}},\tfrac{L}{2\sqrt{\eta_1}}\right]}$ and is equivalent to $\displaystyle{l_s\in \left[\tfrac{-L}{2},\tfrac{L}{2}\right]}.$ This guarantees the extra direction creates no problem for us, and the rest of the proof follows the analogous result in \cite{conicalsing}.
\end{proof}

\subsection{Distance control for high curvature region}

In this subsection, we prove curvature bounds for points away from $\displaystyle{\{r_M=0\}}.$ Furthermore, for points of high curvature, we show that the distance from $\displaystyle{\{r_M=0\}}$ must be bounded on each time slice. 
\begin{lem}\label{distCurvControl}
There exists $c_0>0$ with the following property: for small $\zeta >0$ there exists $C_{\zeta} >0$ such that if $t\in (0,c_0\zeta],$ then
\begin{align*}
    &d_{g(t)}\left( \{r_M=\sqrt{\gamma t}\}, \{r_M=0\}\right) \leq C_{\zeta}\sqrt{t},\\
    & |Rm(g(t)|_{g(t)}\leq \frac{\zeta}{t} \hspace{0.2cm} \text{in} \hspace{0.2cm} \{r_M > \sqrt{\gamma t}\},
\end{align*}
where $C_{\zeta}=c(g_N)\sqrt{\frac{C_M}{\zeta}}$ and $\gamma=C_M\zeta^{-1}.$
\end{lem}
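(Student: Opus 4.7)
The plan is to handle the two estimates separately: the curvature bound follows immediately from \eqref{LimitRadialDecay}, while the distance bound comes from an explicit computation on the expander model $(N\times\mathbb{R}, g_0(t+s))$ transferred through the approximating sequence $(M_{s_l}, g_{s_l}(t))$ via the diffeomorphisms from Lemma \ref{SmallScales}.

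For the curvature estimate I apply \eqref{LimitRadialDecay} directly. At any point $x$ with $r_M(x) > \sqrt{\gamma t}$,
\[
|Rm(g(t))|_{g(t)}(x) \leq \frac{C_M}{r_M(x)^2} < \frac{C_M}{\gamma t} = \frac{\zeta}{t},
\]
by the choice $\gamma = C_M/\zeta$.

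For the distance bound I first fix $c_0 = \eta_1/(32 C_M)$ with $\eta_1$ as in Lemma \ref{SmallScales}, so that $t \in (0, c_0 \zeta]$ implies $t \leq \eta_1(32\gamma)^{-1}$ and the lemma is applicable. Given $p \in M$ with $r_M(p) = \sqrt{\gamma t}$, I pick approximating points $p_{s_l} \in M_{s_l}$ with $H_{s_l}(p_{s_l}) \to p$ under \eqref{RFconvergence}; for $s_l$ small Lemma \ref{SmallScales} furnishes a near-isometric diffeomorphism $\Theta_{s_l, t}$ from a neighborhood of $p_{s_l}$ onto an expander region, with $\mathbf{r}_{s_l+t}(\Theta_{s_l,t}(p_{s_l}))$ comparable to $\sqrt{\gamma t}$. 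In $(N\times\mathbb{R}, g_0(t+s_l))$ the distance from any point with $\mathbf{r}_{s_l+t} \leq \sqrt{\gamma t}$ to the spine $\{q_{\max}\}\times\mathbb{R}$ is bounded by $c(g_N)\sqrt{\gamma t} + c'(g_N)\sqrt{t+s_l}$, using the lower gradient bound $|\nabla^{g_N}\mathbf{r}|_{g_N} \geq 1/2$ from Lemma \ref{lemma21anal} on the conical end together with a uniform diameter estimate on the interior region $\{\mathbf{r} \leq \Lambda_0\}$. Since $\gamma = C_M/\zeta$ is large, both contributions can be absorbed into $C_\zeta \sqrt{t}$ with $C_\zeta = c(g_N)\sqrt{C_M/\zeta}$.

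Transferring this back through the almost-isometry $\Theta_{s_l, t}$ yields $d_{g_{s_l}(t)}(p_{s_l}, \Gamma_{s_l}) \leq (1 + o(1)) C_\zeta \sqrt{t}$, and passing to the limit $s_l \to 0$ using the $C^\infty_{\mathrm{loc}}$ convergence $H_{s_l}^* g_{s_l}(t) \to g(t)$ together with the fact that $H_{s_l}(\Gamma_{s_l})$ accumulates on $\{r_M = 0\}$ gives the claimed bound in $g(t)$. The main obstacle is making the limiting argument rigorous: one must ensure the minimizing paths of length $\lesssim \sqrt{\gamma t}$ stay within a region where the $C^\infty$ convergence is uniform and where $\Theta_{s_l,t}$ is close to an isometry. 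This is not problematic since such paths lie inside $\{r_{s_l} \leq \sqrt{\gamma t}\}$, which is comfortably contained in the domain of $\Theta_{s_l, t}$ (extending out to radius $\tfrac{3}{2}\sqrt{\gamma t + s_l(\Lambda_2 + 1)^2}$) and where Lemma \ref{SmallScales} provides the required $C^k$ closeness to the expander model.
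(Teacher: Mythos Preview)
Your proposal is correct and follows essentially the same route as the paper: the curvature bound comes directly from \eqref{LimitRadialDecay} with $\gamma=C_M/\zeta$, and the distance bound is obtained by invoking Lemma~\ref{SmallScales} on the approximating sequence, estimating the corresponding distance in the expander model $(N\times\mathbb{R},g_0(t+s_l))$, and then passing to the limit $s_l\to 0$. The only cosmetic differences are that the paper fixes $\varepsilon=10^{-2}$, $k=0$ explicitly when applying Lemma~\ref{SmallScales}, and cites \cite[Lemma 5.3]{conicalsing} for the diameter bound on the expander where you sketch it directly via the gradient bound $|\nabla^{g_N}\mathbf{r}|\geq 1/2$ from Lemma~\ref{lemma21anal}.
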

\begin{proof}
Fix $\varepsilon=10^{-2}.$ By \eqref{LimitRadialDecay}, with $k=0$ in Lemma \ref{SmallScales}, there are $\Lambda_2$ and $\eta_1$ such that if $\gamma=\frac{C_M}{\zeta}$ and $\zeta$ is small enough, then for small enough $s_l>0$ and each $t\in (0,\frac{\eta_1}{32\gamma}],$ there is a map $\displaystyle{ Q_{s_l,t}: \left\{r_{s_l}\leq \frac{5}{4}\sqrt{\gamma t + s_l(\Lambda_2+1)^2}\right\}\longrightarrow \expander}$ satisfying
\begin{align*}
    |(Q_{s_l,t}^{-1})^*g_{s_l}(t)-g_{\exp}(t+s_l)|_{g_{\exp}(t+s_l)}< \frac{1}{100},
\end{align*}
in $Im(Q_{s_l,t})\subset \left \{ \mathbf{r}_s \leq \tfrac{3}{2}\sqrt{\gamma t + s_l(\Lambda_2+1)^2}\right\}\times [-L/2,L/2]$ and
\begin{align*}
    |Rm(g(t))|_{g(t)} \leq \frac{C_M}{r_M^2}< \frac{C_M}{\gamma t}=\frac{\zeta}{t}
\end{align*}
in $\{r_M > \sqrt{\gamma t}\},$ for $t\in (0, \eta_1(32\gamma)^{-1}].$

The distance from $\displaystyle{\left\{r_{s_l}=\sqrt{\gamma t + s_l(\Lambda_2+1)^2}\right\}}$ to $\displaystyle{\left\{ r_{s_l}=\Lambda_2\sqrt{s_l}\right\}}$ is bounded as follows. Just for simplicity, let $\displaystyle{\left\{ r_{s_l}=\Lambda_2\sqrt{s_l}\right\}=:U^{0}_{s_l,\Lambda_2}}$ and $\displaystyle{\left\{r_{s_l}=\sqrt{\gamma t + s_l(\Lambda_2+1)^2}\right\}:= U^{t}_{s_l,{\Lambda_2+1}}}.$ The second part of Theorem \ref{uniCurvature} and Lemma \ref{SmallScales} together yield
\begin{align*}
    &d_{g_{s_l}(t)} \left( U^{t}_{s_l,{\Lambda_2+1}}, U^{0}_{s_l,\Lambda_2}\right)\leq d_{(Q_{s_l,t}^{-1})^*g_{s_l}(t)}\left(Q_{s_l,t}\left(U^{t}_{s_l,{\Lambda_2+1}}\right),Q_{s_l,t}\left(U^{0}_{s_l,\Lambda_2}\right)\right)\\
    & \leq \sqrt{1.01}d_{g_0(t+s_l)}\left( \pi_1\left( Q_{s_l,t}\left(U^{t}_{s_l,{\Lambda_2+1}}\right)  \right)\times I_L,\pi_1\left(Q_{s_l,t}\left(U^{0}_{s_l,\Lambda_2}\right)  \right)\times I_L  \right),
\end{align*}
where $\displaystyle{I_L=[-L/2,L/2]}.$ Working on $(N,g_N(t+s_l)),$ \cite[Lemma 5.3]{conicalsing} yields
\begin{align*}
    &d_{g_{s_l}(t)} \left( U^{t}_{s_l,{\Lambda_2+1}}, U^{0}_{s_l,\Lambda_2}\right)\\
    &\leq \sqrt{1.01}\text{diam}_{g_N(t+s_l)}\left( \left\{ \mathbf{r}_s\leq \frac{3}{4}\sqrt{\gamma t +s_l(\Lambda_2+1)^2}\right\} \right)\leq C_{\zeta}\sqrt{t+s_l}.
\end{align*}
Since $\displaystyle{H_l^*r_{s_l}=H_l^*(\Psi_{s_l}^{-1})^*(\phi^{-1})^* r}$ in the region above, $H_l^{*}g_{s_l}(t) \rightarrow g(t)$ and $\Psi_{s_l}^{-1}\circ H_l \rightarrow \Psi^{-1}$ give
\begin{align*}
    d_{g(t)}\left( \left\{r_M = \sqrt{\gamma t}\right\}, \left\{ r_M=0\right\} \right) \leq C_{\zeta}\sqrt{t},
\end{align*}
by definition of $r_M.$
\end{proof}

\subsection{Gromov--Hausdorff convergence to the initial data}

In this section, we prove that ${(M,d_{g(t)}) \to (Z,d_Z)}$ in the Gromov--Hausdorff sense as $t\searrow0.$ We show that for every $\varepsilon >0,$ ${\Psi: (Z\backslash \Gamma,d_Z) \rightarrow (M,g(t)) }$ is an $\varepsilon-$isometry for small enough $t>0.$ The result follows from the two lemmata below.
\begin{lem}
For every $\varepsilon>0,$ there exist $\delta_1>0$ and $t_1>0$ such that for all $t\in (0,t_1],$  the map $\displaystyle{\Psi: \{ r\geq \delta_1\} \longrightarrow \{ r_M \geq \delta_1\}}$ satisfies
\begin{align}
    \sup \left\{ |d_{g(t)}(\Psi(z_1),\Psi(z_2))-d_Z(z_1,z_2)|, z_1,z_2 \in \{r \geq \delta_1\} \right\} < 3\varepsilon.
\end{align}
\end{lem}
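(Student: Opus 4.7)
The plan is to compare $d_{g(t)}$ and $d_Z$ via path lengths, using the $C^\infty_{\mathrm{loc}}$ convergence $\Psi^*g(t) \to g_Z$ away from $\Gamma$ noted just after \eqref{LimitRadialDecay}, combined with a path-surgery that exploits the edge-cone structure of both spaces near the singular set.

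Given $\varepsilon>0$, I would first fix $\delta_1$, then a much smaller $\delta_0 \in (0,\delta_1)$, and finally $t_1>0$; on the compact set $K_{\delta_0} := \{r \ge \delta_0\} \subset Z \setminus \Gamma$ the smooth convergence gives, for $t \le t_1$,
\[
(1-\varepsilon')\, g_Z \;\le\; \Psi^*g(t) \;\le\; (1+\varepsilon')\, g_Z,
\]
with $\varepsilon'$ as small as needed. The key geometric input is that $Rm(g_X) \ge 1$ forces $\mathrm{diam}(X,g_X) \le \pi$ by Bonnet--Myers, so in the model $C(X) \times \mathbb{R}$ the tube at radius $\delta_0$ has cross-sectional diameter $\le \pi \delta_0$.

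For the upper bound I would take a near-minimizer $\sigma$ of $d_Z(z_1,z_2)$ and replace every excursion into $\{r<\delta_0\}$ by a detour along $\{r=\delta_0\}$: each costs at most $\pi\delta_0$ (plus lower order), and the number of excursions is bounded by $\mathrm{length}(\sigma)/(2(\delta_1-\delta_0))$. Choosing $\delta_0$ small in terms of $\delta_1$, $\varepsilon$ and $\mathrm{diam}(Z,g_Z)$ makes the total added length $<\varepsilon$. The modified path lies in $K_{\delta_0}$; pushing it through $\Psi$ and applying the metric equivalence yields $d_{g(t)}(\Psi(z_1),\Psi(z_2)) < d_Z(z_1,z_2)+3\varepsilon$. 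The lower bound is symmetric on $(M,g(t))$: by Theorem \ref{uniCurvature} the tube $\{r_M=\delta_0\}$ is nearly isometric to its conical model at scale $\delta_0$, and by Lemma \ref{distCurvControl} the innermost region $\{r_M \le \sqrt{\gamma t}\}$ has $g(t)$-diameter tending to $0$ as $t \searrow 0$. The same detour estimate then modifies a near-minimizer $\sigma \subset M$ to lie in $\{r_M \ge \delta_0\} \subset \mathrm{Im}(\Psi)$, and pulling back by $\Psi^{-1}$ closes the bound.

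The main obstacle is making the tube-surgery rigorous on the $M$-side: one must bound the cross-sectional diameter of $\{r_M=\delta_0\}$ by roughly $\pi\delta_0 + L + o_t(1)$ uniformly in $t$, combining the conical-region closeness (valid on $\sqrt{\gamma t}\le r_M \le \delta_0$) with Lemma \ref{distCurvControl} on the innermost piece $r_M \le \sqrt{\gamma t}$. One also needs a uniform diameter bound for $(M,g(t))$ as $t \searrow 0$ to control the number of excursions of a near-minimizer, which again follows from the same uniform estimates plus the bounded contribution of the expanding region.
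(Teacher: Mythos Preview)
Your strategy---path surgery to push near-minimizers into $\{r\ge\delta_0\}$ and then use the $C^\infty_{\mathrm{loc}}$ convergence---is exactly the paper's. But your excursion count is wrong, and this is the one genuine gap. A path $\sigma$ with endpoints in $\{r\ge\delta_1\}$ can cross the level set $\{r=\delta_0\}$ arbitrarily many times while staying inside $\{r<\delta_1\}$ between crossings; there is no reason each excursion into $\{r<\delta_0\}$ costs length $\ge 2(\delta_1-\delta_0)$, so the bound $\mathrm{length}(\sigma)/(2(\delta_1-\delta_0))$ on the number of excursions fails. Since each surgery adds up to $\pi\delta_0$, an unbounded number of excursions destroys the estimate.

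The fix is the paper's simpler device: take a \emph{minimizing} geodesic $\sigma$, let $q_1$ be its \emph{first} entry into $\{r<\delta_1\}$ and $q_2$ its \emph{last} exit, and replace the whole middle segment by a single detour along $\{r=\delta_1\}$. Because $q_1,q_2$ lie on the geodesic, $d_Z(z_1,z_2)=d_Z(z_1,q_1)+d_Z(q_1,q_2)+d_Z(q_2,z_2)$, so one only needs $d_{Z,\delta_1}(q_1,q_2)\le d_Z(q_1,q_2)+\varepsilon$; in the model this follows from $d_{Z,\delta_1}(q_1,q_2)\le |l_1-l_2|+\pi\delta_1$ and $d_Z(q_1,q_2)\ge |l_1-l_2|$ (projection to the $\mathbb{R}$-factor). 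No excursion counting, no intermediate scale $\delta_0$.

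Two smaller points. First, Lemma~\ref{distCurvControl} does \emph{not} say that $\{r_M\le\sqrt{\gamma t}\}$ has $g(t)$-diameter $\to 0$; that region still carries the full length $L$ of the curve direction. What it gives (together with the conical/expanding closeness) is that the projection to the $l$-coordinate is approximately $1$-Lipschitz throughout $\{r_M\le\delta_1\}$, which is precisely what you need for the lower bound $d_{g(t)}(q_1,q_2)\ge |l_1-l_2|-o(1)$ on the $M$-side. Second, your explicit invocation of Theorem~\ref{uniCurvature} and Lemma~\ref{distCurvControl} for the $M$-side is a good instinct---the paper's proof just says ``similarly'' here, and that step does require exactly the control on $\{r_M<\delta_1\}$ that those results provide.
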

\begin{proof}
Let $\delta_1>0$ be such that $\text{diam}_Z(\{r=\delta_1\})< \varepsilon+L.$ Since $\Psi^*g(t) \to g_Z$ locally uniformly away from $\Gamma$ as $t\searrow 0,$ it follows that $\text{diam}_{g(t)}(\{r_M= \delta_1\})<\varepsilon+L$ for small $t>0.$ Let $d_{Z,\delta_1}$ be the intrinsic metric on $\{r\geq \delta_1\}$ induced by $g_Z,$ and similarly $d_{g(t),\delta_1}$ the intrinsic metric on $\{r_M\geq \delta_1\}$ induced by $g(t).$ We claim that
\begin{align}\label{intrinsicEstimate}
    |d_{Z,\delta_1}(z_1,z_2)-d_Z(z_1,z_2)| < \varepsilon,
\end{align}
and 
\begin{align}\label{metricEstimate}
    |d_{g(t),\delta_1}(\Psi(z_1),\Psi(z_2))-d_{g(t)}(\Psi(z_1),\Psi(z_2))|<\varepsilon,
\end{align}
for all $z_1,z_2\in \{r\geq \delta_1\}.$

We start by showing \eqref{intrinsicEstimate}. It is clear that $d_Z(z_1,z_2)\leq d_{Z,\delta_1}(z_1,z_2).$ Let us assume that $d_Z(z_1,z_2)<d_{Z,\delta_1}(z_1,z_2).$ Then there exists a path connecting $z_1$ to $z_2,$ starting from $z_1$ and escaping $\{r\geq \delta_1\}.$ Let $q_1\in \{r=\delta_1\}$ be the first point this path touches $\{r=\delta_1\}$ and $q_2\in \{r=\delta_1\}$ be the point where the path leaves $\{r\leq \delta_1\}.$ We can choose $\delta_1 >0$ small enough so that $\displaystyle{d_{Z,\delta_1}(q_1,q_2)\leq d_Z(q_1,q_2) +\varepsilon,}$ since $q_1,q_2 \in \{r=\delta_1\}.$ Therefore,
\begin{align*}
d_Z(z_1,z_2) & < d_{Z,{\delta_1}}(z_1,z_2) \leq d_Z(z_1,q_1)+ d_Z(z_2,q_2) + d_{Z,\delta_1}(q_1,q_2)\\ &\leq d_Z(z_1,q_1)+ d_Z(z_2,q_2) + d_{Z}(q_1,q_2) + \varepsilon\\
    & = d_Z(z_1,z_2)+ \varepsilon,
\end{align*}
which suffices to prove the first estimate. Similarly, we can prove \eqref{metricEstimate}.

By the uniform convergence of $\Psi^*g(t)$ to $g_Z$ away from $\Gamma$ as $t\to 0,$ we also have that $\displaystyle{|d_{Z,\delta_1}(z_1,z_2)-d_{g(t),\delta_1}(\Psi(z_1),\Psi(z_2))|< \ \varepsilon,}$ for all $z_1,z_2\in \{r\geq \delta_1\}.$ Putting everything together and using the triangle inequality, we prove the lemma. Possibly after making $\delta_1>0$ smaller, we can also get $\text{diam}(\{r\leq \delta_1\})< L+\varepsilon.$

\end{proof}

\begin{lem}
For every $\varepsilon>0,$ there exists small enough $t_2>0$ such that the following holds. For any $x\in \{r_M=0\},$ we have
\begin{align*}
    d_{g(t)}\left( x, Im(\Psi)\right) < \varepsilon,
\end{align*}
for all $t\in (0,t_2].$
\end{lem}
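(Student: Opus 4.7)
The plan is to upgrade the set-to-set bound of Lemma \ref{distCurvControl} to a pointwise bound by exploiting the product structure $g_0(t)=g_N(t)\otimes\gcan(t)$ encoded in Lemma \ref{SmallScales}. Given $\varepsilon>0$, I first fix $\zeta>0$ small and set $t_2\in(0,c_0\zeta]$ so that the constant $C_\zeta=c(g_N)\sqrt{C_M/\zeta}$ from Lemma \ref{distCurvControl} satisfies $C_\zeta\sqrt{t_2}<\varepsilon$. I then aim to exhibit, for each $x\in\{r_M=0\}$, an explicit $y\in Im(\Psi)$ with $d_{g(t)}(x,y)<\varepsilon$ for all $t\in(0,t_2]$.

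To produce $y$, I would pass to the approximating flows $(M_{s_l},g_{s_l}(t))$. Given a preimage $x_l$ of $x$ under $H_l$, which lies in the spine region $\{r_{s_l}\leq\Lambda_1\sqrt{s_l}\}$ for $l$ large, let $l_0:=l_{s_l}(x_l)$ and pick $y_l$ at the same level $l_0$ with $r_{s_l}(y_l)=\sqrt{\gamma t+s_l(\Lambda_2+1)^2}$. Under the map $Q_{s_l,t}$ from Lemma \ref{SmallScales}, both $x_l$ and $y_l$ map into the same $\{l=l_0\}$ slice of $\expander$, so their distance in the close-to-product metric $g_0(t+s_l)$ reduces to the distance in $(N,g_N(t+s_l))$ between the critical point $q_{\max}$ and the level set $\{\mathbf{r}=\sqrt{\gamma t+s_l(\Lambda_2+1)^2}\}$; this is bounded by $C_\zeta\sqrt{t+s_l}$ via the diameter argument already used in the proof of Lemma \ref{distCurvControl}. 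Passing to the limit $l\to\infty$ using \eqref{RFconvergence} and the locally smooth convergence $H_l^*r_{s_l}\to r_M$, a subsequence of $H_l(y_l)$ converges to some $y\in M$ with $r_M(y)=\sqrt{\gamma t}>0$, hence $y\in Im(\Psi)$, and $d_{g(t)}(x,y)\leq C_\zeta\sqrt{t}<\varepsilon$, as desired.

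The main obstacle is precisely that Lemma \ref{distCurvControl}, as stated, only provides a set-to-set distance, whereas here one needs a point-to-set estimate uniform in $x\in\{r_M=0\}$. The resolution is to track the $l$-coordinate throughout: within each $l$-slice, the intersection of $\{r_M=0\}$ with that slice is a single point, and the in-slice diameter is controlled by the $N$-direction diameter, which is what the proof of Lemma \ref{distCurvControl} actually establishes. This slice-wise refinement is where the product structure of $g_0(t)$, isolated in Lemma \ref{SmallScales}, is indispensable; without it one only controls the aggregate diameter, which could a priori be as large as the length $L$ along the $\mathbb{R}$-direction. A minor bookkeeping point is checking that $l_0$ is realised in the range of $l_{s_l}$ for every large $l$, which follows immediately from the definition $l_{s_l}=\pi_2\circ\Phi_{s_l}^{-1}$ and $Im(l_{s_l})=[-L/2,L/2]$.
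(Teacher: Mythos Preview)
Your proposal is correct and follows essentially the same route as the paper's proof. The paper is much terser: it maps $x$ back to $q_l\in M_{s_l}$, invokes ``as in Lemma~\ref{distCurvControl}'' to obtain $d_{g(t)}(x,\{r_M=\delta_2\})<\varepsilon$, and concludes. But the content behind that phrase is exactly the slice-wise argument you spell out: the chain of inequalities in the proof of Lemma~\ref{distCurvControl} bounds distances by the $N$-direction diameter, and this works verbatim with a single point $q_l$ in place of the set $U^0_{s_l,\Lambda_2}$, because the target level set $\{r_{s_l}=\sqrt{\gamma t+s_l(\Lambda_2+1)^2}\}$ meets every $l$-slice.

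One small overclaim to flag: you assert that $x_l$ lies in the spine region $\{r_{s_l}\leq\Lambda_1\sqrt{s_l}\}$ for $l$ large. This is stronger than what follows from $r_M(x)=0$ and the convergence $H_l^*r_{s_l}\to r_M$ (which is only stated locally uniformly on $Im(\Psi)$). What you actually need, and what does hold, is the weaker statement $r_{s_l}(x_l)\to 0$: otherwise along a subsequence $r_{s_l}(x_l)\geq\delta>0$, so $x_l\in Im(\Psi_{s_l})$, and the uniform convergence $\tilde\Psi_{s_l}\to\Psi$ on $\{r\geq\delta\}$ forces $x\in Im(\Psi)$, a contradiction. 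This weaker fact already places $x_l$ in the domain of $Q_{s_l,t}$ and of $l_{s_l}$, which is all your argument uses.
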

\begin{proof}
Take $x\in \{r_M=0\}=Im(\Psi)^c.$ Then, of course, $x\in \{r_M\leq \delta_2\}$ for any $\delta_2>0.$ If $\delta_2$ is small enough, and for large enough $l,$ we consider $q_l\in M_{s_l}$ such that $H_l(x)=q_l.$ Identifying these points allows us to consider the distance $\displaystyle{ d_{g_{s_l}(t)}\left(q_l, \left\{r_{s_l}=\sqrt{\gamma t + s_l(\Lambda_2+1)^2}\right\}\right)}$ as $\displaystyle{d_{H_l^*g_{s_l}(t)} \left(x, \left\{H_l^*(\Psi_{s_l}^{-1})^*(\phi^{-1})^* r=\sqrt{\gamma t + s_l(\Lambda_2+1)^2}\right\}\right).}$

As in Lemma \ref{distCurvControl}, taking the limit we have
\begin{align*}
    d_{g(t)}(x,\{r_M=\delta_2\}) < \varepsilon,
\end{align*}
for small enough $t$ and $\delta_2,$ that will depend on $c_0>0$ from Lemma \ref{distCurvControl}. Therefore, for any $x\in Im(\Psi)^c,$ there exists $y\in \{r_M=\delta_2\}\subset Im(\Psi)$ such that $d_{g(t)}(x,y)< \varepsilon.$

\end{proof}

\subsection{Tangent flow at an edge point}

Let $t_k \searrow 0$ be a sequence of times going to 0. From the convergence in \eqref{RFconvergence}, it follows that there exist a sequence $s_{l_k}\searrow 0$ such that for any $j\leq k,$
\begin{align*}
    (t_k)^{j/2}|(\nabla^g)^j(g-H_{l_k}^*g_{s_{l_k}})|_g (t_k) < \frac{1}{k}
\end{align*}
and $\frac{s_{l_k}}{t_k}\to 0.$ Let $\gamma_k=\gamma_3(1/k,k),$ $\Lambda_k=\Lambda_2(1/k,k)$ and $\eta_k=\eta_1(1/k,k)$ be as given by Lemma \ref{SmallScales} and set $\displaystyle{\tau_k=\frac{\eta_k}{32\gamma_k}}.$ Passing to a subsequence if necessary, we may assume that $t_k < \tau_k$ and $s_{l_k}< s_2(1/k,k).$ Lemma \ref{SmallScales} implies that there exist
\begin{align*}
    \Theta_{k}: \left\{ r_{s_{l_{k}}}\leq \sqrt{\gamma_k t_k+s_{l_k}(\Lambda_k+1)^2}\right\}\longrightarrow \expander,
\end{align*}
diffeomorphism onto their image, such that for $j\leq k,$
\begin{align*}
    (t_k+s_{l_k})^{j/2}|(\nabla^{g_{\exp}(t_k+s_{l_k})})^j((\Theta_{k}^{-1})^*g_{s_{l_k}}(t_k)-g_{\exp}(t_k+s_{l_k}))|_{g_{\exp}(t_k+s_{l_k})} < \frac{1}{k}
\end{align*}
in $Im(\Theta_{k}).$ Set $R_k=(\Theta_{k}\circ H_{s_{l_k}})^{-1}$ so that
\begin{align*}
    (t_k)^{j/2}|(\nabla^{g_{\exp}(t_k+s_{l_k})})^j(R_k^*g(t_k)-g_{\exp}(t_k+s_{l_k}))|_{g_{\exp}(t_k+s_{l_k})} < \frac{C}{k}
\end{align*}
in $Im(\Theta_{k})$ for large $k.$ Using the explicit form of $g_{\exp}(t_k+s_k),$ we get:
\begin{align*}
    \left|(\nabla^{g_N\otimes \gcan})^j\left(\left(R_k\circ \tilde{\phi}^{-1}_{t_k+s_{l_k}}\right)^*\tfrac{1}{t_k}g(t_k)-\left(1+\frac{s_{l_k}}{t_k}\right)g_N\otimes\gcan\right)\right|_{g_N\otimes\gcan}< \frac{C}{k}
\end{align*}
in $\tilde{\phi}_{t_k+s_{l_k}}(Im(\Theta_{k})).$ Set $h_k=(R_k\circ \tilde{\phi}^{-1}_{t_k+s_{l_k}})^*\frac{1}{t_k}g(t_k).$ Thus,
\begin{align}\label{metricConvergence}
    \left| (\nabla^{g_N\otimes\gcan})^j\left(h_k-\left(1+\frac{s_{l_k}}{t_k}\right)g_N\otimes\gcan\right)\right|_{g_N\otimes\gcan}< \frac{C}{k}
\end{align}
in $Im(\tilde{\phi}_{t_k+s_{l_k}}\circ R_k^{-1})= \tilde{\phi}_{t_k+s_{l_k}}(Im(\Theta_{k})).$ 

By Lemma \ref{SmallScales}, we have
\begin{align*}
    \left\{ \mathbf{r}_{s_{l_k}}\leq \frac{1}{2}\sqrt{\gamma_k t_k+s_{l_k}(\Lambda_k+1)^2}\right\} \times I_L\subset \{\mathbf{r}_{s_{l_k}}\leq \sqrt{\gamma_kt_k}\}\times I_L\subset Im(\Theta_{k}),
\end{align*}
where in the first inclusion we assumed that $\displaystyle{\gamma_3(1/k,k)\geq (\Lambda_2(1/k,k)+1)^2}$ without loss of generality.  It follows that 
\begin{align}\label{SetsControl}
   \nonumber \tilde{\phi}& _{t_k+s_{l_k}}(Im(\Theta_{k}))\supset \tilde{\phi}_{t_k+s_{l_k}}\left(  \bigg\{ \mathbf{r}_{s_{l_k}}\leq \tfrac{1}{2}\sqrt{\gamma_k t_k+s_{l_k}(\Lambda_k+1)^2}\bigg\} \times I_L\right)\\
    &= \phi^N_{t_k+s_{l_k}}\left(  \bigg\{ \mathbf{r}_{s_{l_k}}\leq \tfrac{1}{2}\sqrt{\gamma_k t_k+s_{l_k}(\Lambda_k+1)^2}\bigg\}\right) \times \left[-\tfrac{L}{2\sqrt{t_k+s_{l_k}}},\tfrac{L}{2\sqrt{t_k+s_{l_k}}}\right]\\
  \nonumber  &\supset \{\mathbf{r}\leq \sqrt{\gamma_k/8}\}\times I_L,
\end{align}
for large $k.$ The last inclusion follows from \cite[Lemma 5.3]{conicalsing}. Now let $q_k\in M$ be such that $(q_{\max},0)=\tilde{\phi}_{t_k+s_{l_k}}\circ R_k^{-1}(q_k)\in \expander$ satisfies
\begin{align*}
    |Rm(g_N)|_{g_N}(q_{\max})=\max_{N}|Rm(g_N)|_{g_N}.
\end{align*}
Lemma \ref{distCurvControl} for $\zeta=\frac{1}{2}\max_N |Rm(g_N)|_{g_N}$
yields $\hat{C},\hat{\gamma}>1$ such that 
\begin{align*}
    q_k\in \{r_M=\sqrt{\hat{\gamma}t_k}\},
\end{align*}
and 
\begin{align*}
    d_{g(t_k)}\left(\{r_M=\sqrt{\hat{\gamma}t_k}\},\{r_M=0\}\right)\leq \hat{C}\sqrt{t_k}.
\end{align*}
Given any $p_k\notin Im(\Psi),$ we know that $r_M(p_k)=0,$ hence
\begin{align*}
    d_{g(t_k)}(p_k,q_k)\leq \hat{C}\sqrt{t_k}.
\end{align*}
Therefore, 
\begin{align*}
    d_{g_N\otimes\gcan}\left( (q_{\max},0),\tilde{\phi}_{t_k+s_{l_k}}\circ R^{-1}_k(p_k)\right)\leq 2\hat{C},
\end{align*}
for large $k.$ Putting this together with \eqref{metricConvergence}, \eqref{SetsControl} and letting $\gamma_k\to +\infty,$ we get the convergence of ${\left(M,{t_k}^{-1}g(t_k),p_k\right)}$ to ${(\expander,g_N\otimes\gcan,(\Bar{q},\Bar{l}))}$ in the smooth pointed Cheeger-Gromov topology. This is enough to conclude that 
\begin{align*}
    (M,\tfrac{1}{t_k}g(t_k t),p_k)_{t\in (0,t_k^{-1}T]} \rightarrow   {(\expander,h(t),(\Bar{q},\Bar{l}))_{t\in (0,\infty)}}
\end{align*}
in the smooth pointed Cheeger-Gromov topology, where $(N,h(t))$ is a complete Ricci flow with bounded curvature and $h(1)=g_N\otimes\gcan.$ The forward and backward uniqueness of Ricci flows gives $h(t)=g_{\exp}(t)$ for all $t\in (0,\infty),$ and this finishes the proof of Theorem \ref{mainTheorem}. 

\printbibliography

@article {H1,
    AUTHOR = {Hamilton, Richard S.},
     TITLE = {Three-manifolds with positive {R}icci curvature},
   JOURNAL = {J. Differential Geometry},
  FJOURNAL = {Journal of Differential Geometry},
    VOLUME = {17},
      YEAR = {1982},
    NUMBER = {2},
     PAGES = {255--306},
       URL = {http://projecteuclid.org/euclid.jdg/1214436922},
}

@book {degenerationOfMetrics,
    AUTHOR = {Cheeger, Jeff},
     TITLE = {Degeneration of {R}iemannian metrics under {R}icci curvature
              bounds},
    SERIES = {Lezioni Fermiane. [Fermi Lectures]},
 PUBLISHER = {Scuola Normale Superiore, Pisa},
      YEAR = {2001},
     PAGES = {ii+77},
}

@article{simon-topping2,
     AUTHOR = {Simon, Miles and Topping, Peter M.},
     TITLE = {Local mollification of {R}iemannian metrics using {R}icci
              flow, and {R}icci limit spaces},
   JOURNAL = {Geom. Topol.},
  FJOURNAL = {Geometry \& Topology},
    VOLUME = {25},
      YEAR = {2021},
    NUMBER = {2},
     PAGES = {913--948},
       DOI = {10.2140/gt.2021.25.913},
       URL = {https://doi.org/10.2140/gt.2021.25.913},
}

@article{conicalsing,
   AUTHOR = {Gianniotis, Panagiotis and Schulze, Felix},
     TITLE = {Ricci flow from spaces with isolated conical singularities},
   JOURNAL = {Geom. Topol.},
  FJOURNAL = {Geometry \& Topology},
    VOLUME = {22},
      YEAR = {2018},
    NUMBER = {7},
     PAGES = {3925--3977},
       DOI = {10.2140/gt.2018.22.3925},
       URL = {https://doi.org/10.2140/gt.2018.22.3925},
}

@article {Der1,
    AUTHOR = {Deruelle, Alix},
     TITLE = {Smoothing out positively curved metric cones by {R}icci
              expanders},
   JOURNAL = {Geom. Funct. Anal.},
  FJOURNAL = {Geometric and Functional Analysis},
    VOLUME = {26},
      YEAR = {2016},
    NUMBER = {1},
     PAGES = {188--249},
       DOI = {10.1007/s00039-016-0360-0},
       URL = {https://doi.org/10.1007/s00039-016-0360-0},
}

@article{Der-Lamm,
    AUTHOR = {Deruelle, Alix and Lamm, Tobias},
     TITLE = {Weak stability of {R}icci expanders with positive curvature
              operator},
   JOURNAL = {Math. Z.},
  FJOURNAL = {Mathematische Zeitschrift},
    VOLUME = {286},
      YEAR = {2017},
    NUMBER = {3-4},
     PAGES = {951--985},
       DOI = {10.1007/s00209-016-1791-x},
       URL = {https://doi.org/10.1007/s00209-016-1791-x},
}

@article{Der-Schu-Sim,
    AUTHOR = {Deruelle, Alix and Schulze, Felix and Simon, Miles},
     TITLE = {On the regularity of {R}icci flows coming out of metric
              spaces},
   JOURNAL = {J. Eur. Math. Soc. (JEMS)},
  FJOURNAL = {Journal of the European Mathematical Society (JEMS)},
    VOLUME = {24},
      YEAR = {2022},
    NUMBER = {7},
     PAGES = {2233--2277},
       DOI = {10.4171/jems/1138},
       URL = {https://doi.org/10.4171/jems/1138},
}

@book{grigoryan,
    AUTHOR = {Grigor'yan, Alexander},
     TITLE = {Heat kernel and analysis on manifolds},
    SERIES = {AMS/IP Studies in Advanced Mathematics},
    VOLUME = {47},
 PUBLISHER = {American Mathematical Society, Providence, RI; International
              Press, Boston, MA},
      YEAR = {2009},
     PAGES = {xviii+482},
       DOI = {10.1090/amsip/047},
       URL = {https://doi.org/10.1090/amsip/047},
}

@article{Koch-Lamm,
    AUTHOR = {Koch, Herbert and Lamm, Tobias},
     TITLE = {Geometric flows with rough initial data},
   JOURNAL = {Asian J. Math.},
  FJOURNAL = {Asian Journal of Mathematics},
    VOLUME = {16},
      YEAR = {2012},
    NUMBER = {2},
     PAGES = {209--235},
       DOI = {10.4310/AJM.2012.v16.n2.a3},
       URL = {https://doi.org/10.4310/AJM.2012.v16.n2.a3},
}

@article {hypstab,
    AUTHOR = {Bamler, Richard H.},
     TITLE = {Stability of hyperbolic manifolds with cusps under {R}icci
              flow},
   JOURNAL = {Adv. Math.},
  FJOURNAL = {Advances in Mathematics},
    VOLUME = {263},
      YEAR = {2014},
     PAGES = {412--467},
       DOI = {10.1016/j.aim.2014.07.003},
       URL = {https://doi.org/10.1016/j.aim.2014.07.003},
}

@article{Simon1,
    AUTHOR = {Simon, Miles},
     TITLE = {Ricci flow of almost non-negatively curved three manifolds},
   JOURNAL = {J. Reine Angew. Math.},
  FJOURNAL = {Journal f\"{u}r die Reine und Angewandte Mathematik. [Crelle's
              Journal]},
    VOLUME = {630},
      YEAR = {2009},
     PAGES = {177--217},
       DOI = {10.1515/CRELLE.2009.038},
       URL = {https://doi.org/10.1515/CRELLE.2009.038},
}

@article{Simon2,
 AUTHOR = {Simon, Miles},
     TITLE = {Ricci flow of non-collapsed three manifolds whose {R}icci
              curvature is bounded from below},
   JOURNAL = {J. Reine Angew. Math.},
  FJOURNAL = {Journal f\"{u}r die Reine und Angewandte Mathematik. [Crelle's
              Journal]},
    VOLUME = {662},
      YEAR = {2012},
     PAGES = {59--94},
       DOI = {10.1515/CRELLE.2011.088},
       URL = {https://doi.org/10.1515/CRELLE.2011.088},
}

@article{Simon3,
    AUTHOR = {Simon, Miles},
     TITLE = {Ricci flow of regions with curvature bounded below in
              dimension three},
   JOURNAL = {J. Geom. Anal.},
  FJOURNAL = {Journal of Geometric Analysis},
    VOLUME = {27},
      YEAR = {2017},
    NUMBER = {4},
     PAGES = {3051--3070},
       DOI = {10.1007/s12220-017-9793-4},
       URL = {https://doi.org/10.1007/s12220-017-9793-4},
}

@article{Polyhedral-RF,
      AUTHOR = {Lebedeva, Nina and Matveev, Vladimir and Petrunin, Anton and
              Shevchishin, Vsevolod},
     TITLE = {Smoothing 3-dimensional polyhedral spaces},
   JOURNAL = {Electron. Res. Announc. Math. Sci.},
  FJOURNAL = {Electronic Research Announcements in Mathematical Sciences},
    VOLUME = {22},
      YEAR = {2015},
     PAGES = {12--19},
       DOI = {10.3934/era.2015.22.12},
       URL = {https://doi.org/10.3934/era.2015.22.12},
}

@article {Lamm-Simon,
    AUTHOR = {Lamm, Tobias and Simon, Miles},
     TITLE = {Ricci flow of {$W^{2,2}$}-metrics in four dimensions},
   JOURNAL = {Comment. Math. Helv.},
  FJOURNAL = {Commentarii Mathematici Helvetici. A Journal of the Swiss
              Mathematical Society},
    VOLUME = {98},
      YEAR = {2023},
    NUMBER = {2},
     PAGES = {261--364},
       DOI = {10.4171/cmh/553},
       URL = {https://doi.org/10.4171/cmh/553},
}

@article{Toppin2D,
    AUTHOR = {Topping, Peter M.},
     TITLE = {Uniqueness of instantaneously complete {R}icci flows},
   JOURNAL = {Geom. Topol.},
  FJOURNAL = {Geometry \& Topology},
    VOLUME = {19},
      YEAR = {2015},
    NUMBER = {3},
     PAGES = {1477--1492},
       DOI = {10.2140/gt.2015.19.1477},
       URL = {https://doi.org/10.2140/gt.2015.19.1477},
}

@article{Giesen_Toppin2D,
     AUTHOR = {Giesen, Gregor and Topping, Peter M.},
     TITLE = {Existence of {R}icci flows of incomplete surfaces},
   JOURNAL = {Comm. Partial Differential Equations},
  FJOURNAL = {Communications in Partial Differential Equations},
    VOLUME = {36},
      YEAR = {2011},
    NUMBER = {10},
     PAGES = {1860--1880},
       DOI = {10.1080/03605302.2011.558555},
       URL = {https://doi.org/10.1080/03605302.2011.558555},
}

@article {RF_almostNonNegativeCurvature,
    AUTHOR = {Bamler, Richard H. and Cabezas-Rivas, Esther and Wilking,
              Burkhard},
     TITLE = {The {R}icci flow under almost non-negative curvature
              conditions},
   JOURNAL = {Invent. Math.},
  FJOURNAL = {Inventiones Mathematicae},
    VOLUME = {217},
      YEAR = {2019},
    NUMBER = {1},
     PAGES = {95--126},
       DOI = {10.1007/s00222-019-00864-7},
       URL = {https://doi.org/10.1007/s00222-019-00864-7},
}

@article{Hochard_paper,
    author="Raphael Hochard",
    title="{Short-time existence of the Ricci flow on complete, non-collapsed 3-manifolds with Ricci curvature bounded from below}",
    journal="arXiv e-prints, page arXiv:1603.08726",
    volume="",
    pages="",
    year="2016"
}

@article{Huang_Tam,
     AUTHOR = {Huang, Shaochuang and Tam, Luen-Fai},
     TITLE = {K\"{a}hler-{R}icci flow with unbounded curvature},
   JOURNAL = {Amer. J. Math.},
  FJOURNAL = {American Journal of Mathematics},
    VOLUME = {140},
      YEAR = {2018},
    NUMBER = {1},
     PAGES = {189--220},
       DOI = {10.1353/ajm.2018.0003},
       URL = {https://doi.org/10.1353/ajm.2018.0003},
}

@article{Lai1,
    AUTHOR = {Lai, Yi},
     TITLE = {Ricci flow under local almost non-negative curvature
              conditions},
   JOURNAL = {Adv. Math.},
  FJOURNAL = {Advances in Mathematics},
    VOLUME = {343},
      YEAR = {2019},
     PAGES = {353--392},
       DOI = {10.1016/j.aim.2018.11.006},
       URL = {https://doi.org/10.1016/j.aim.2018.11.006},
}

@article{Schulze_Simon,
    AUTHOR = {Schulze, Felix and Simon, Miles},
     TITLE = {Expanding solitons with non-negative curvature operator coming
              out of cones},
   JOURNAL = {Math. Z.},
  FJOURNAL = {Mathematische Zeitschrift},
    VOLUME = {275},
      YEAR = {2013},
    NUMBER = {1-2},
     PAGES = {625--639},
       DOI = {10.1007/s00209-013-1150-0},
       URL = {https://doi.org/10.1007/s00209-013-1150-0},
}

@article{Simon4,
    AUTHOR = {Simon, Miles},
     TITLE = {Deformation of {$C^0$} {R}iemannian metrics in the direction
              of their {R}icci curvature},
   JOURNAL = {Comm. Anal. Geom.},
  FJOURNAL = {Communications in Analysis and Geometry},
    VOLUME = {10},
      YEAR = {2002},
    NUMBER = {5},
     PAGES = {1033--1074},
       DOI = {10.4310/CAG.2002.v10.n5.a7},
       URL = {https://doi.org/10.4310/CAG.2002.v10.n5.a7},
}

@article {Topping_Yin,
    AUTHOR = {Topping, Peter M. and Yin, Hao},
     TITLE = {Smoothing a measure on a {R}iemann surface using {R}icci flow},
   JOURNAL = {Ars Inven. Anal.},
  FJOURNAL = {Ars Inveniendi Analytica},
      YEAR = {2024},
     PAGES = {Paper No. 4, 28},
}

@article{Xu1,
     AUTHOR = {Xu, Guoyi},
     TITLE = {Short-time existence of the {R}icci flow on noncompact
              {R}iemannian manifolds},
   JOURNAL = {Trans. Amer. Math. Soc.},
  FJOURNAL = {Transactions of the American Mathematical Society},
    VOLUME = {365},
      YEAR = {2013},
    NUMBER = {11},
     PAGES = {5605--5654},
       DOI = {10.1090/S0002-9947-2013-05998-3},
       URL = {https://doi.org/10.1090/S0002-9947-2013-05998-3},
}

@incollection {HAM2D,
    AUTHOR = {Hamilton, Richard S.},
     TITLE = {The {R}icci flow on surfaces},
 BOOKTITLE = {Mathematics and general relativity ({S}anta {C}ruz, {CA},
              1986)},
    SERIES = {Contemp. Math.},
    VOLUME = {71},
     PAGES = {237--262},
 PUBLISHER = {Amer. Math. Soc., Providence, RI},
      YEAR = {1988},
       DOI = {10.1090/conm/071/954419},
       URL = {https://doi.org/10.1090/conm/071/954419},
}
\end{document}